\newlength{\dhatheight}
\newtheorem{theorem}{Theorem}[section]
\newtheorem{proposition}[theorem]{Proposition}
\newtheorem{lemma}[theorem]{Lemma}
\newtheorem{remark}[theorem]{Remark}
\newtheorem{corollary}[theorem]{Corollary}
\newtheorem{definition}[theorem]{Definition}
\newtheorem{notation}[theorem]{Notation}
\newcommand{\N}{{\mathbb N}}
\newcommand{\R}{{\mathbb R}}
\newcommand{\Z}{{\mathbb Z}}
\newcommand{\ep}{\epsilon}
\newcommand{\dd}{\Delta}
\newcommand{\ra}{\rightarrow}
\newcommand{\ras}{{\stackrel{~~*}{\ra}}}
\newcommand{\ga}{\Gamma}
\newcommand{\xx}{X^1}
\newcommand{\pp}{{\mathcal{P}}}
\newcommand{\bo}{{\partial}}
\newcommand{\td}{\widetilde d}
\newcommand{\tdd}{\widetilde \dd}
\newcommand{\nn}{\N[\frac{1}{4}]}
\newcommand{\ps}{\Psi}
\newcommand{\ph}{\Phi}
\newcommand{\tht}{\Theta}
\newcommand{\vt}{\vartheta}
\newcommand{\slex}{<_{sl}}
\newcommand{\up}{\Upsilon}
\newcommand{\tp}{\widetilde \phi}
\newcommand{\tth}{\widetilde \theta}
\newcommand{\dmi}{diameter inequality}
\newcommand{\dms}{diameter inequalities}
\newcommand{\dmc}{diameter}
\newcommand{\cfl}{combed filling}
\newcommand{\tfi}{tame filling inequality}
\newcommand{\tfs}{tame filling inequalities}
\newcommand{\tci}{tame combing inequality}
\newcommand{\dia}{diagrammatic}
\newcommand{\nff}{{normal filling}}%normal form filling?  padding?  packing?
\newcommand{\cnf}{{combed normal filling}}%combed normal form filling?
\newcommand{\cni}{{tame normal inequality}}%tame normal filling inequality?
\newcommand{\ehy}{{edge homotopy}}%normal homotopy?
\newcommand{\ehs}{{edge homotopies}}%****WARNING: if I change to "normal",
\newcommand{\edg}{{normal form diagram}}%edge diagram? normal diagram?
\newcommand{\cc}{{\mathcal{N}}}
\newcommand{\cld}{{\mathcal{D}}}
\newcommand{\cle}{{\mathcal{E}}}
\newcommand\ct{{T}}
\newcommand\hc{{f}}
\newcommand\he{{\hat e}}
\newcommand\hhe{{\widetilde e}}
\newcommand\hhr{{\widetilde r}}
\newcommand\hhp{{\widetilde p}}
\newcommand{\stkg}{{stacking}}%recursive filling, striation
\newcommand{\stkbl}{{stackable}}%recursively fillable, striable
\newcommand{\fstkg}{{stacking}}
\newcommand{\fstkbl}{{stackable}}
\newcommand{\wstkbl}{{weakly stackable}}%weakly stackable
\newcommand{\astkg}{{algorithmic stacking}}
\newcommand{\astkbl}{{algorithmically stackable}}
\newcommand{\afstkbl}{{algorithmically stackable}}
\newcommand{\awstkbl}{{algorithmically weakly stackable}}%algorithmically weakly stackable
\newcommand{\rstkbl}{{regularly stackable}}
\newcommand{\rstkg}{{regular stacking}}
\newcommand{\rwstkbl}{{regularly weakly stackable}}
\newcommand{\prs}{{\stackrel{~~p*}{\ra}}}
\newcommand{\mui}{\mu^i}
\newcommand{\mue}{\mu^e}
\newcommand{\tj}{\widetilde j}
\newcommand{\dire}{{\vec E(X)}}  % the directed edges
\newcommand\ves{{\vec E_{r}}}  % the bad edges = recursive edges
\newcommand\ugd{{E_d}}  % the undirected good edges
\newcommand\dgd{{\vec E_d}}  % the directed good edges
\newcommand{\rnf}{{recursive normal filling}}%recursive normal form filling?
\newcommand{\rf}{{recursive  filling}}
\newcommand{\rcnf}{{recursive combed normal filling}}
\newcommand{\rcf}{{recursive combed  filling}}
\newcommand{\edi}{{ediam}}
\newcommand{\idi}{{idiam}}
\newcommand{\maxr}{\zeta}
\newcommand{\kti}{k_\cc^i}
\newcommand{\kxi}{k_r^i}
\newcommand{\kte}{k_\cc^e}
\newcommand{\kxe}{k_r^e}
\newcommand{\hr}{c(\ves)}
\newcommand{\wa}{e_{w,a}}
\newcommand{\xa}{e_{x,a}}
\newcommand{\alg}{S_c}
\newcommand{\tc}{\tilde c}
\newcommand{\ega}{e_{g,a}}
\begin{document}
\title[Stackable groups and tame filling invariants]
{Stackable groups, tame filling invariants, and algorithmic properties of groups}

\author[M.~Brittenham]{Mark Brittenham}
\address{Department of Mathematics\\
        University of Nebraska\\
         Lincoln NE 68588-0130, USA}
\email{mbrittenham2@math.unl.edu}

\author[S.~Hermiller]{Susan Hermiller}
\address{Department of Mathematics\\
        University of Nebraska\\
         Lincoln NE 68588-0130, USA}
\email{smh@math.unl.edu}
%\date{December 23, 2011}
\thanks{2010 {\em Mathematics Subject Classification}. 20F65; 20F10, 20F69, 68Q42}

\begin{abstract}
We introduce a combinatorial property for 
finitely generated groups
called \stkbl\ that
implies the existence of 
%provides?
an inductive procedure for constructing
van Kampen diagrams with respect to a
canonical finite presentation. 
We also define \astkbl\ groups, for which
this procedure is an effective algorithm.
This property gives a common
model for algorithms arising from both rewriting
systems and almost convexity for groups.

We also introduce a new pair of asymptotic invariants that are
filling inequalities refining the
notions of intrinsic and extrinsic diameter inequalities for
finitely presented groups.  These \tfs\ 
are quasi-isometry invariants, up to
Lipschitz equivalence of functions (and, in the case of
the intrinsic \tfi, up to choice of a sufficiently large
set of defining relators).  
We show that the radial 
tameness functions of~\cite{hmeiermeastame} are 
equivalent to the extrinsic \tfi\ condition,
and so intrinsic \tfs\ can be viewed as the
intrinsic analog of radial tameness functions.

We discuss both intrinsic and extrinsic
\tfs\ for many examples of \stkbl\ groups,
including groups with a finite complete
rewriting system,
Thompson's group $F$, Baumslag-Solitar groups
and their iterates, and almost convex groups. 
We show that the fundamental group
of any closed 3-manifold with a uniform
geometry is algorithmically stackable
using a regular language of normal forms.
%We also bound \tfs\ for 
%groups admitting normal forms with a 
%fellow traveler property.  
%For a group with a finite complete rewriting system,
%the string growth complexity function is (up to 
%Lipschitz equivalence) an upper bound for both
%of these new invariants, and this result has
%application to the process of
%searching for finite complete 
%rewriting systems for groups.
\end{abstract}

\maketitle

\tableofcontents

%%%%%%%%%%%%%%%%%%%%%%%%%%%%%%%%%%%%%%%%%%%%%%%%%%%%%%%%%%%%%%%%%%%%%%%%%%%%
%%%%%%%%%%%%%%%%%%%%%%%%%%%%%%%%%%%%%%%%%%%%%%%%%%%%%%%%%%%%%%%%%%%%%%%%%%%%
%%%%%%%%%%%%%%%%%%%%%%%%%%%%%%%%%%%%%%%%%%%%%%%%%%%%%%%%%%%%%%%%%%%%%%%%%%%%

\section{Introduction and definitions}\label{sec:intro}

%%%%%%%%%%%%%%%%%%%%%%%%%%%%%%%%%%%%%%%%%%%%%%%%%%%%%%%%%%%%%%%%%%%%%%%%%%%%
%%%%%%%%%%%%%%%%%%%%%%%%%%%%%%%%%%%%%%%%%%%%%%%%%%%%%%%%%%%%%%%%%%%%%%%%%%%%

%%%%%%%%%%%%%%%%%%%%%%%%%%%%%%%%%%%%%%%%%%%%%%%%%%%%%%%%%%%%%%%%%%%%%%%%%%%%
%%%%%%%%%%%%%%%%%%%%%%%%%%%%%%%%%%%%%%%%%%%%%%%%%%%%%%%%%%%%%%%%%%%%%%%%%%%%
%%%%%%%%%%%%%%%%%%%%%%%%%%%%%%%%%%%%%%%%%%%%%%%%%%%%%%%%%%%%%%%%%%%%%%%%%%%%

\subsection{Overview} \label{subsec:overview}

%%%%%%%%%%%%%%%%%%%%%%%%%%%%%%%%%%%%%%%%%%%%%%%%%%%%%%%%%%%%%%%%%%%%%%%%%%%%
%%%%%%%%%%%%%%%%%%%%%%%%%%%%%%%%%%%%%%%%%%%%%%%%%%%%%%%%%%%%%%%%%%%%%%%%%%%%

In geometric group theory, several properties of finitely
generated groups have been defined using a language of
normal forms together with geometric or combinatorial conditions
on the associated Cayley graph, most notably in the concepts of
combable groups and automatic groups in which the normal forms
satisfy a fellow traveler property. 
In this paper,  
in Subsection~\ref{subsec:stackdef}, we use 
a set of normal forms together with another
combinatorial property
on the Cayley graph of a finitely generated group
to define a property 
which we call \stkbl.  
%In Corollary~\ref{cor:fp}, we show that
%every \fstkbl\ group must be finitely presented.
We show that for any \stkbl\ group, these combinatorial
properties yield a 
%canonical
finite presentation for the group (in Lemma~\ref{lem:prefixclosed}) and
an inductive procedure which, upon input of a word 
in the generators that represents the identity of the group, 
constructs a van Kampen
diagram for that word over this presentation. 
We also define a notion of \astkbl, which
guarantees that this procedure is an
effective algorithm, and a
notion of \rstkbl, in which the \astkbl\ 
structure utilizes a regular 
language of normal forms.
The structure of these van Kampen diagrams for \stkbl\ groups 
differs from the canonical diagrams arising in combable groups,
and the \stkbl\ property allows a wider spectrum of 
filling invariant functions (discussed below) and hence
applies to wider classes of groups.

\smallskip

\noindent{\bf Propositions~\ref{prop:solvwp} and~\ref{prop:fillalg}.} 
{\em 
If $G$ is \astkbl\ over the finite 
generating set $A$,
then $G$ has solvable word problem, and
there is an algorithm which, upon input of a word $w \in A^*$
that represents the identity in $G$, will construct a van 
Kampen diagram for $w$ over the \stkg\ presentation.
}

\smallskip

This \stkbl\ property provides a uniform
model for the canonical procedures for
building van Kampen diagrams that arise
in both the example of groups with a finite complete
rewriting system and the example of almost convex groups,
as we show in Sections~\ref{sec:rs} and~\ref{subsec:ac}.  
The \stkbl\ property for a group $G$ also enables computing
asymptotic filling invariants for $G$ 
by an inductive method; we give an illustration
of this in Section~\ref{sec:rnf}.

Many asymptotic
invariants associated to any
group $G$ with a finite presentation
$\pp=\langle A \mid R\rangle$ have been defined
using properties of van Kampen diagrams over
this presentation.  Collectively, these are
referred to as filling invariants; an
exposition of many of these is given by
Riley in~\cite[Chapter II]{riley}.  
One of the most well-studied
filling functions is the isodiametric, or intrinsic
diameter, function for $G$.  
The adjective ``intrinsic''
refers to the fact that the distances are 
measured in the van Kampen diagram 
%for this property.  
It is natural to consider
the distance in the Cayley graph, instead, giving
an ``extrinsic'' property, and
in~\cite{bridsonriley}, 
Bridson and Riley defined and studied 
properties of extrinsic
diameter functions.
It is often easier to compute upper bounds for these
functions, rather than compute exact values;
a group satisfies a \dmi\ for
a function $f$ if $f$ is an upper bound for 
the respective diameter function.

In this paper we also introduce refinements of 
the notions of \dms, called \tfs.
Essentially, intrinsic and
extrinsic \dms\ measure the 
height of the highest peak in van Kampen diagrams,
where height refers to (intrinsic or extrinsic)
distance from the basepoint of the diagram, 
while intrinsic and extrinsic \tfs\ give
a finer measure of the ``hilliness'' of these
diagrams.  
In order to accomplish this, we consider not only
van Kampen diagrams, but also homotopies
that ``comb'' these diagrams; a collection of
van Kampen diagrams and homotopies for
all of the words representing the identity of
the group is a \cfl.
  \begin{comment}
  More specifically, the group $G=gp(\pp)$ satisfies
  a \tfi\ for
  a function $f:\nn \ra \nn$ if for every word
  $w \in A^*$ representing the identity of $G$,
  there is a van Kampen diagram $\dd_w$ with boundary
  word $w$, and a continuous choice of paths 
  $\ps_w:\bo \dd_w \times [0,1] \ra \dd_w$
  from the basepoint
  to the boundary of $\dd$, such that whenever the
  distance to $*$ from a point on the path
  is greater than $f(n)$, then for all points
  further along the path, the distance
  back to $*$ remains at least $n$.  
  We utilize a ``coarse'' measure of distance,
  which coincides with the path metric on vertices,
  in order to measure distances throughout a 2-complex.
  Depending upon whether this coarse distance is measured
  in the van Kampen diagram or the Cayley complex,
  we have an intrinsic or extrinsic \tfi.
  \end{comment}
In the last part of this Introduction,
in Subsection~\ref{subsec:tfdef}, we give the details
of the definitions of {\cfl}s and \dmc\ and \tfs.  
%\coment{Hope to add reference here to proof 
%that ED and ET differ, if they finish it.}

In Section~\ref{sec:relationships}, 
we show in Proposition~\ref{prop:itimpliesid}
that an intrinsic or
extrinsic \tfi\ with respect to a function $f$ 
implies an intrinsic or extrinsic (respectively)
\dmi\ for the function $n \mapsto \lceil f(n) \rceil$.
In \cite{bridsonriley}, Bridson and Riley give an example
of a finitely presented group $G$ whose (minimal) intrinsic and
extrinsic diameter functions are not Lipschitz
equivalent.  (Two functions $f,g: \N \ra \N$ are {\em Lipschitz equivalent}
if there is a constant $C$ such that for all $n \in N$ 
we have both $f(n) \le Cg(Cn+C)+C$ and $g(n) \le Cf(Cn+C)+C$.)
While we have not resolved the relationship
between \tfs\  in general, we give bounds
on their interconnections in Theorem~\ref{thm:itversuset}. 

\smallskip

\noindent{\bf Theorem~\ref{thm:itversuset}.} {\em
Let $G$ be a finitely presented group with Cayley complex $X$ 
and \cfl\ $\cld$.
Suppose that $j:\N \ra \N$ is a nondecreasing function such that
for every vertex $v$ of a van Kampen diagram $\dd$ in $\cld$,
$d_{\dd}(*,v) \le j(d_X(\ep,\pi_{\dd}(v)))$, and
let $\tj:\nn \ra \nn$ be
defined by $\tj(n):=j(\lceil n \rceil)+1$. 
\begin{enumerate}
\item If $G$ satisfies an extrinsic \tfi\ for the function 
$f$ with respect to $\cld$, then $G$ 
satisfies an intrinsic \tfi\ for the function $\tj \circ f$.
\item If $G$ satisfies an intrinsic \tfi\ for the function 
$f$ with respect to $\cld$, then $G$ 
satisfies an extrinsic \tfi\ for the function $f \circ \tj$.
\end{enumerate}
}

Our definition of {\cfl} was also motivated by
the concept of tame combing defined by
Mihalik and Tschantz~\cite{mihaliktschantz}, 
and associated radial tame combing functions
advanced by Hermiller and Meier~\cite{hmeiermeastame}.
Tame combings are homotopies in the Cayley complex;
in Section~\ref{sec:relax}, we recast them
into the setting of van Kampen diagrams
in the following portion of Proposition~\ref{prop:etistc}.

\smallskip

\noindent{\bf Proposition~\ref{prop:etistc}'.} {\em
Let $G$ be a group with 
a finite symmetrized presentation $\pp$.
Up to Lipschitz equivalence of nondecreasing functions,
the pair $(G,\pp)$ satisfies an extrinsic 
\tfi\ for a function
$f$ if and only if $(G,\pp)$ satisfies a radial \tci\ 
with respect to $f$.
}

\smallskip

\noindent Effectively, Proposition~\ref{prop:etistc}' shows that 
the intrinsic \tfi\ is the
intrinsic analog of the radial \tci.

Every group admitting a radial \tci\ for a finite-valued function,
and hence, by Proposition~\ref{prop:etistc}', every group admitting a
finite-valued extrinsic \tfi\
function, must also be tame combable
as defined in~\cite{mihaliktschantz}.  
Although every group admits 
%intrinsic and 
extrinsic 
diameter inequalities for finite-valued functions,
it is not yet clear whether every finitely presented group admits
\tfs\ for such functions.  
Tschantz~\cite{tschantz} has conjectured
that there is a finitely presented group that does not
admit a tame combing, and as a result, that there exists
a finitely presented group which admits an extrinsic diameter inequality
for a finite-valued function $f$, but which does not
satisfy an extrinsic \tfi\ for 
any finite-valued function, and in particular does not satisfy
an extrinsic \tfi\ for
any function Lipschitz equivalent to $f$.

Section~\ref{sec:relax} also contains Definitions~\ref{def:nfi}
and~\ref{def:rtfi} of two more asymptotic
invariants which we prove to be equivalent to \tfs\ in
Propositions~\ref{prop:htpydomain} and~\ref{prop:etistc}.
These alternative views are applied in later sections.
A fundamental difference between the intrinsic and
extrinsic cases arises in this section, in that 
iterative constructions that glue van Kampen diagrams
preserve extrinsic distances, but not necessarily
intrinsic distances.

In Section~\ref{sec:rnf}, we show that 
\stkbl\ groups admit a stronger inductive 
procedure, which produces a combed van Kampen
diagram for any input word.  That is, there is a
canonical combed filling associated to a
\stkbl\ group.
%As we note in Subsection~\ref{subsec:tfdef}, it is 
%not clear that every finitely presented group admits \tfs\  for
%finite-valued functions;
%however, for any \stkbl\ group,
%For \stkbl\ groups whose canonical presentation
%is finite, i.e.~the \fstkbl\ groups,
The inductive nature of
this associated combed filling yields the following.
%part of Theorem~\ref{thm:rnftame}.

\smallskip

\noindent{\bf Theorem~\ref{thm:rnftame}'.}  {\em
If $G$ is a \fstkbl\ group,  
then $G$ admits intrinsic and extrinsic \tfs\ for 
finite-valued functions.
}

\smallskip

\noindent{\bf Theorem~\ref{thm:astktf}.}  {\em
If $G$ is an \afstkbl\ group, then $G$ satisfies
both intrinsic and extrinsic \tfs\ with respect to
a recursive function.
}

\smallskip

An immediate consequence of Theorem~\ref{thm:rnftame}
and Proposition~\ref{prop:etistc} is that every \fstkbl\ group
%admits a finite-valued radial tame combing inequality,
%which in turn implies that the group
satisfies the quasi-isometry invariant property of
having a tame combing, developed by Mihalik
and Tschantz~\cite{mihaliktschantz}.  
If Tschantz's
conjecture~\cite{tschantz} that a non-tame-combable
group exists is true, this would then
also imply that there is a finitely presented group
that does not admit the \fstkbl\ property with
respect to any finite generating set.

In Section~\ref{sec:examples} we discuss several
examples of (classes of)
\fstkbl\ groups, and compute bounds on their
tame filling invariants.
To begin, in Section~\ref{sec:rs} we consider groups
that can be presented by rewriting systems.  
A {\em finite complete rewriting system}
for a group $G$ consists of a finite set $A$
and a finite set of rules $R \subseteq A^* \times A^*$
%with $u,v \in A^*$ 
%(with each $(u,v) \in R$ written $u \ra v$)
such that 
as a monoid, $G$ is presented by 
$G = Mon\langle A \mid u=v$ whenever $(u,v) \in R \rangle,$
and the rewritings
$xuy \ra xvy$ for all $x,y \in A^*$ and $(u,v)$ in $R$ satisfy:
(1) each $g \in G$ is 
represented by exactly one word over $A$ that cannot be rewritten,
and
(2) the (strict) partial ordering
$x>y$ if $x \rightarrow
x_1 \rightarrow ... \rightarrow x_n \rightarrow y$ is 
well-founded.
The {\em length} of a rewriting rule $u \ra v$ in $R$
is the sum of the lengths of the words $u$ and $v$.
The {\em string growth complexity} function 
$\gamma:\N \ra \N$ associated to this system is defined by
$\gamma(n)$ = the maximal length of a word that is
a rewriting of a word of length $\le n$.
We use the algorithm of Section~\ref{sec:rnf}
to obtain \tfs\ in terms of $\gamma$ in this case.

\smallskip

\noindent{\bf Theorem~\ref{thm:crsrecit}} and 
{\bf Corollary~\ref{cor:rsgrowth}.} {\em
Let $G$ be a group with a finite complete
rewriting system.  Let
$\gamma$ be the
string growth complexity function 
for the associated
minimal system and let $\maxr$ denote the length of 
the longest rewriting rule for this system.
Then $G$ is \rstkbl\ and satisfies 
both intrinsic and extrinsic \tfs\ 
for the recursive
function $n \mapsto \gamma(\lceil n \rceil +\maxr+2)+1$.
}

\smallskip

This result has 
%considerable 
potential 
to reduce the amount of work in 
%the process of 
searching for
finite complete rewriting systems for groups.
A choice of partial ordering used in (2) above implies an
upper bound on the string growth complexity function.
Then given a lower bound on the intrinsic or extrinsic
\tfs\ or \dms, this corollary
can be used to eliminate partial orderings before
attempting to use them (e.g., via the
Knuth-Bendix algorithm) to construct a rewriting system.
A further paper by the present authors will 
address this application more fully.

In Section~\ref{subsec:f}, we consider Thompson's group $F$;
i.e., the group of orientation-preserving piecewise linear
automorphisms of the unit interval for which all linear
slopes are powers of 2, and all breakpoints lie in the
the 2-adic numbers. 
Thompson's group $F$ has been the focus of considerable
research in recent years, and yet the questions of
whether $F$ is automatic or has a finite complete rewriting system  
are open (see the problem list at~\cite{thompsonpbms}).
%Thompson's group is the group of orientation-preserving piecewise linear
%automorphisms of the unit interval for which all linear
%slopes are powers of 2, and all breakpoints lie in the
%the 2-adic numbers. 
In~\cite{chst}, Cleary, Hermiller, Stein, and Taback
show that Thompson's group 
$F$ is \fstkbl\ (and their proof can be shown to give an \astkg),
and we note in Section~\ref{subsec:f} that the 
set of normal forms associated to this \stkg\ is a 
deterministic context-free language.
In~\cite{chst} the authors also show (after combining their result
with Proposition~\ref{prop:etistc}) that $F$ admits a
linear extrinsic \tfi.  In Section~\ref{subsec:f}
we show that this group also 
admits a linear intrinsic \tfi, thus refining
the result of Guba~\cite[Corollary 1]{guba} that $F$ has a linear
intrinsic diameter function.

In the next two subsections of Section~\ref{sec:examples},
we discuss two specific examples of classes of groups admitting finite
complete rewriting systems in more detail.
We show in Section~\ref{subsec:bs} that the Baumslag-Solitar group 
$BS(1,p)$ with $p \ge 3$ admits an intrinsic \tfi\ Lipschitz
equivalent to the exponential function $n \mapsto p^n$, 
utilizing the linear extrinsic \tfi\ for these groups
shown in~\cite{chst}.
We note in Section~\ref{subsec:iteratedbs}
that the iterated Baumslag-Solitar groups $G_k$
are examples of \rstkbl\ groups admitting recursive intrinsic
and extrinsic \tfs.  However, applying the lower bound of
Gersten~\cite{gerstenexpid} on their intrinsic diameter functions,
for each natural number $k>2$
the group $G_k$ does not admit intrinsic or extrinsic
\tfs\ with respect to
a $k-2$-fold tower of exponentials.

Building upon the characterization
of Cannon's almost convexity property~\cite{cannon}
by a radial \tci\ 
in~\cite{hmeiermeastame}, 
%we show in Section~\ref{sec:examples}
%that every almost convex group is \pfsl,
%yielding the following.
in Section~\ref{subsec:ac} we show the following.

\smallskip

\noindent{\bf Theorem~\ref{thm:aceti}.}  {\em
Let $G$ be a group with finite generating set $A$, and
let $\iota: \nn \ra \nn$ denote the identity
function.  The following
are equivalent:
\begin{enumerate}
\item The pair $(G,A)$ is almost convex 
\item There is a finite
presentation $\pp=\langle A \mid R \rangle$ for $G$ that
satisfies an intrinsic \tfi\ with respect to $\iota$.
\item There is a finite
presentation $\pp=\langle A \mid R \rangle$ for $G$ that
satisfies an extrinsic \tfi\ with respect to $\iota$.
\end{enumerate}
Moreover, if any of these hold, then $G$ is \afstkbl\ over $A$.
}

\smallskip

\noindent The properties in Theorem~\ref{thm:aceti}
are satisfied by all
word hyperbolic groups and  
cocompact discrete groups of isometries of Euclidean
space, with respect to every generating set~\cite{cannon}.
They are also satisfied by any group $G$ that is shortlex
%(or any length-minimizing ordering) 
automatic with
respect to the generating set $A$ (again this includes
all word hyperbolic groups~\cite[Thms~3.4.5,2.5.1]{echlpt});
for these groups, the set of shortlex normal forms is
a regular language.
In the proof of Theorem~\ref{thm:aceti}, the 
stackable structure constructed for almost convex groups also
utilizes the shortlex normal forms. Hence every
shortlex automatic group, including every
word hyperbolic group, is \rstkbl.

One of the motivations for the definition of automatic 
groups was to understand the computational properties of
fundamental groups of 3-manifolds.  However,
the fundamental group of a 3-manifold 
is automatic if and only if its JSJ decomposition does not 
contain manifolds with a uniform Nil or Sol 
geometry~\cite[Theorem~12.4.7]{echlpt}.
In contrast,~\cite{hs} Hermiller and Shapiro have shown that
the fundamental group of every closed 3-manifold with a
uniform geometry other than hyperbolic must have a
finite complete rewriting system, and so combining this result
with Theorems~\ref{thm:crsrecit} and~\ref{thm:aceti}
yields the following.

\smallskip

\noindent{\bf Corollary~\ref{cor:3mfd}.} {\em
If $G$ is the fundamental group of a closed 3-manifold with
a uniform geometry, then $G$ is \rstkbl. 
} %em

\smallskip 

In~\cite{kkm}, Kharlampovich, Khoussainov, and
Miasnikov introduced the concept of Cayley graph automatic groups,
which utilize a fellow-traveling regular set of ``normal forms''
in which the alphabet for the normal form words is not necessarily
a generating set, resulting in a class of groups
which includes all automatic groups but also includes many
nilpotent and solvable nonautomatic groups.  An
interesting open question to ask, then, is what relationships,
if any, exist between the classes of \stkbl\ groups and Cayley
graph automatic groups.

In Section~\ref{sec:combable},
we consider tame filling invariants for
a class of combable groups.

\smallskip

\noindent{\bf Corollary~\ref{cor:combable}.}  {\em 
If a finitely generated group $G$ admits a 
quasi-geodesic language of simple word normal forms
satisfying a $K$-fellow traveler property, then $G$ satisfies linear
intrinsic and extrinsic \tfs.
}

\smallskip

\noindent In particular,
all automatic groups
over a prefix-closed language of normal forms
satisfy the hypotheses of Corollary~\ref{cor:combable}.
This result refines that of Gersten~\cite{gersten},
that combable groups have a linear intrinsic
diameter function. 

Finally, in Section~\ref{sec:qiinv}, we prove that \tfs\ are
quasi-isometry invariants, in the following.

\smallskip

\noindent{\bf Theorem~\ref{thm:itisqi}.}  {\em
Suppose that $(G,\pp)$ and $(H,\pp')$ 
are quasi-isometric groups with
finite presentations.
If $(G,\pp)$ satisfies an extrinsic 
\tfi\  with respect to $f$, then $(H,\pp')$
satisfies an extrinsic
\tfi\  with respect to a function
that is Lipschitz equivalent to $f$.
If $(G,\pp)$ satisfies an intrinsic 
\tfi\  with respect to $f$, then after adding
all relators of length up to a sufficiently 
large constant to the 
presentation $\pp'$, 
the pair $(H,\pp')$
satisfies an intrinsic
\tfi\  with respect to a function
that is Lipschitz equivalent to $f$.
}

\eject

%%%%%%%%%%%%%%%%%%%%%%%%%%%%%%%%%%%%%%%%%%%%%%%%%%%%%%%%%%%%%%%%%%%%%%%%%%%%
%%%%%%%%%%%%%%%%%%%%%%%%%%%%%%%%%%%%%%%%%%%%%%%%%%%%%%%%%%%%%%%%%%%%%%%%%%%%
%%%%%%%%%%%%%%%%%%%%%%%%%%%%%%%%%%%%%%%%%%%%%%%%%%%%%%%%%%%%%%%%%%%%%%%%%%%%

\subsection{Notation} \label{subsec:notation}

%%%%%%%%%%%%%%%%%%%%%%%%%%%%%%%%%%%%%%%%%%%%%%%%%%%%%%%%%%%%%%%%%%%%%%%%%%%%
%%%%%%%%%%%%%%%%%%%%%%%%%%%%%%%%%%%%%%%%%%%%%%%%%%%%%%%%%%%%%%%%%%%%%%%%%%%%

$~$

\vspace{.1in}

Throughout this paper, let $G$ be a group
with a finite {\em symmetric} generating set; that
is, such that the generating set $A$ is closed under inversion.
We will also assume that for each $a \in A$,
the element of $G$ represented by $a$ is not 
the identity $\ep$ of $G$.
%(Note that we may not assume that every defining relator
%is freely reduced, but the freely reduced representative
%of every defining relator, except the
%empty word, must also be a defining relator.)
For a word $w \in A^*$, we write $w^{-1}$ for the 
formal inverse of $w$ in $A^*$. 
%(obtained by replacing each letter by its 
%inverse in $A$, and reversing the order of the 
%letters in the word).
For words $v,w \in A^*$, we write $v=w$ if $v$
and $w$ are the same word in $A^*$, and write $v=_G w$ if
$v$ and $w$ represent the same element of $G$.
%We will always assume that the generating set $A$ is finite.
%, and if $b$ is an element of $A$ distinct from $a$, then
%$a \neq_G b$.

The group $G$ also has a presentation 
$\pp = \langle A \mid R \rangle$
that is {\em symmetrized}; that is, 
such that the generating set $A$ is symmetric,
and the set $R$ of defining relations is closed under
inversion and cyclic conjugation.
Let $X$ be the Cayley 2-complex corresponding to this presentation,
whose 1-skeleton $X^1=\ga$ is the Cayley graph of $G$ with
respect to $A$.
Let $E(X)=E(\ga)$
% and $F(X)$ 
be the set of 1-cells (i.e., undirected edges) 
%and the set of 2-cells 
in $X^1$.
%, respectively.
By usual convention, for all $g \in G$ and $a \in A$,
we consider both the directed edge labeled 
labeled $a$ from the vertex $g$ to $ga$ and the
 directed edge labeled $a^{-1}$ from $ga$ to $g$
to have the same underlying undirected CW complex
edge between the vertices labeled $g$ and $ga$.
Let $\dire=\vec E(\ga)$ be the set of these directed 
edges of $X^1$.

For an arbitrary word $w$ in $A^*$
that represents the
trivial element $\ep$ of $G$, there is a {\em van Kampen
diagram} $\dd$ for $w$ with respect to $\pp$.  
That is, $\dd$ is a finite,
planar, contractible combinatorial 2-complex with 
edges directed and
labeled by elements of $A$, satisfying the
properties that the boundary of 
%the infinite region outside of 
$\dd$ is an edge path labeled by the
word $w$ starting at a basepoint 
vertex $*$ and
reading counterclockwise, and every 2-cell in $\dd$
has boundary labeled by an element of $R$.

Note that although the definition in the previous
paragraph is standard, it is a slight abuse of notation,
in that the 2-cells of a van Kampen diagram
are polygons whose boundaries are labeled by words in $A^*$,
rather than elements of a (free) group.  We will
also consider the set $R$ of defining relators
as a finite subset of $A^* \setminus \{1\}$, where 1 is
the empty word.
We do not assume that every defining relator
is freely reduced, but the freely reduced representative
of every defining relator, except 1, must also be in $R$.
%(In fact, we only utilize (potentially) unreduced
%relators in the quasi-isometry proof of Section~\ref{sec:qiinv}.)
%%% Note to me:  I need this for the QI proof.  %%%%
%We also define the finite set 
%$\hr:=\{x \in A^* \mid \exists a \in A$ with $xa \in R\}$.

In general, there may be many different van 
Kampen diagrams for the word $w$.  Also,
we do not assume that van Kampen diagrams
in this paper are reduced; that is, we allow adjacent
2-cells in $\dd$ to be labeled by the same relator with
opposite orientations.  

For any van Kampen diagram
$\dd$ with basepoint $*$, let $\pi_\dd:\dd \ra X$
denote the canonical cellular map such that $\pi_\dd(*)=\ep$ and
$\pi_\dd$ maps edges to edges preserving both
label and direction.

See for example~\cite{bridson} or~\cite{lyndonschupp} 
for an exposition of the theory of van Kampen diagrams.

%%%%%%%%%%%%%%%%%%%%%%%%%%%%%%%%%%%%%%%%%%%%%%%%%%%%%%%%%%%%%%%%%%%%%%%%%%%%
%%%%%%%%%%%%%%%%%%%%%%%%%%%%%%%%%%%%%%%%%%%%%%%%%%%%%%%%%%%%%%%%%%%%%%%%%%%%
%%%%%%%%%%%%%%%%%%%%%%%%%%%%%%%%%%%%%%%%%%%%%%%%%%%%%%%%%%%%%%%%%%%%%%%%%%%%

\subsection{Stackable groups: Definitions and 
  motivation} \label{subsec:stackdef}

%%%%%%%%%%%%%%%%%%%%%%%%%%%%%%%%%%%%%%%%%%%%%%%%%%%%%%%%%%%%%%%%%%%%%%%%%%%%
%%%%%%%%%%%%%%%%%%%%%%%%%%%%%%%%%%%%%%%%%%%%%%%%%%%%%%%%%%%%%%%%%%%%%%%%%%%%

$~$

\vspace{.1in}

Our goal is to define a class of groups for which there
is an inductive procedure to construct van Kampen diagrams
for all words representing the trivial element. 
Such a collection $\{\dd_w \mid w \in A^*, w=_G\ep\}$ of
van Kampen diagrams for a group $G$ over a presentation 
$\pp=\langle A \mid R \rangle$
(where for each $w$, the
diagram $\dd_w$ has boundary label $w$)
is called a {\em filling} for the pair $(G,\pp)$.

We begin with a group $G$ together with a finite
inverse-closed generating set $A$ of $G$.  
%(As noted in Section~\ref{subsec:notation}, we require that
%$A$ does not contain an element that represents
%the identity element $\ep$ of $G$.)
Let $\ga$ be the associated
Cayley graph.  For each $g \in G$ and $a \in A$,
let $\ega$ denote the directed edge in $\vec E(\ga)$
with initial vertex $g$, terminal vertex $ga$,
and label $a$.  Whenever $x \in A^*$ and $a \in A$,
we also write $\xa:=\ega$, where $g$ is the
element of $G$ represented by $x$.

For any set $\cc  = \{y_g \mid g \in G\}$
of normal forms for $G$ (where $y_g \in A^*$ represents
the element $g \in G$),
we define the set
\[
\dgd=\vec E_{d,\cc} := \{ \ega \mid \mbox{either }y_ga=y_{ga} \mbox{ or } y_g=y_{ga}a^{-1}\}
\subseteq \vec E(\ga)
\]
of {\em degenerate edges}, 
and let $\ugd = E_{d,\cc} \subseteq E(\ga)$ be the set
of undirected edges underlying  the directed edges
in $\dgd$.   
The complementary set
\[
\ves =\vec E_{r,\cc}:= \vec E(\ga) \setminus \dgd
\]
will be called the set of {\em recursive edges}.
%$\hr:=\{x \in A^* \mid \exists a \in A$ with $xa \in R\}$.

\begin{definition}\label{def:fstkbl}
A group $G$ is {\em \stkbl} with respect to a finite symmetric
generating set $A$ if there exist a set $\cc$ of normal forms
for $G$ over $A$ with normal form $1$ for $\ep$, a well-founded 
strict partial ordering $<$ on the set $\ves$ of recursive
edges,
and a constant $k$, such that
whenever $g \in G$, $a \in A$, and $\ega \in \ves$, then
there exists a directed path from $g$ to $ga$ in
$\ga$ labeled by a word $a_1 \cdots a_n \in A^n$
of length $n \le k$ satisfying the property that 
for each $1 \le i \le n$, either
$e_{ga_1 \cdots a_{i-1},a_i} \in \ves$ and 
$e_{ga_1 \cdots a_{i-1},a_i} < \ega$, or else
$e_{ga_1 \cdots a_{i-1},a_i} \in \dgd$.
\end{definition}

We call a group $G$ {\em \stkbl} if there is a finite
generating set for $G$ with respect to which
the group is \stkbl.

Given a group $G$ that is \stkbl\  with respect to 
a generating set $A$, one can define a function 
$c:\ves \ra A^*$ by choosing, for 
each $\ega \in \ves$, a label $c(\ega) =a_1 \cdots a_n \in A^*$
of a directed path in $\ga$ satisfying the property above; that is,
$c(\ega) =_G a$,
$n \le k$, and 
either 
$e_{ga_1 \cdots a_{i-1},a_i} < \ega$ or
$e_{ga_1 \cdots a_{i-1},a_i} \in \dgd$ for each $i$.
The image $c(\ves) \subseteq \cup_{n=0}^k A^n$ is a finite
set.
This function will be called a {\em \stkg\ map}.

On the other hand, given any set $\cc$ of normal forms
for $G$ over $A$ and any function $c:\ves \ra A^*$,
we can define a a relation $<_c$ on $\ves$
as follows.  Whenever $e',e$ are both in $\ves$ and $e'$ 
lies in
the path in $\ga$ that starts at the initial vertex of $e$
and is labeled by $c(e)$
(where $e'$ is oriented 
in the same direction as this path), 
write $e' <_c e$.  Let
$<_c$ be the transitive closure of this relation.
If $c$ is a \stkg\ map obtained from a \stkbl\ structure $(\cc,<,k)$
for $G$ over $A$, then the relation $<_c$ is a subset of the
well-founded strict partial ordering $<$,
and so $<_c$ is also a well-founded strict partial ordering.
Moreover, by K\"onig's Infinity Lemma, $<_c$ satisfies
the property that for each $e \in \ves$, there are
only finitely many $e'' \in \ves$ with $e'' <_c e$.

\begin{definition}\label{def:fstkg}
A {\em \stkg} for a group $G$ with respect to a finite 
symmetric generating set $A$ is a pair $(\cc,c)$
where
$\cc$ is a set 
of normal forms for $G$ over $A$ such that 
the normal form of the identity is the empty word
and $c:\ves \ra A^*$ is a function satisfying
\begin{description}
\item[(S1)] For each $\ega \in \ves$ we have
$c(\ega)=_G a$.
\item[(S2)] The relation $<_c$ on $\ves$ is a strict partial
ordering satisfying the property that for each $e \in \ves$,
there are only finitely many $e'' \in \ves$
with $e'' <_c e$.
\item[(S3)] The image 
$c(\ves)=\{c(e) \mid e \in \ves\} \subset A^*$ 
is a finite set.
\end{description}
\end{definition}

\noindent (Note that Property (S2) implies that $c(\ega) \neq_G a$ 
for all $\ega \in \ves$.)
%\noindent (Note that Property (S2) implies that $c(e)$ cannot
%equal the single letter $a$ that labels $e$ for any $e \in \ves$.)
%, and so $c(e)a^{-1}$ cannot freely reduce to the empty word.)

From the discussion above, the following is immediate.

\begin{lemma}\label{lem:stkbldefs}
A group $G$ is stackable with respect to a finite symmetric 
generating set $A$
if and only if $G$ admits a stacking with respect to $A$.
\end{lemma}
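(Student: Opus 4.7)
The plan is to observe that the paragraph preceding the lemma statement essentially contains both directions of the equivalence, so the proof reduces to formalizing those observations and checking the remaining properties against the two definitions.

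For the forward direction, I would start from a \stkbl\ structure $(\cc,<,k)$ for $G$ over $A$ as in Definition~\ref{def:fstkbl}. For each $\ega \in \ves$, choose a path-label $c(\ega)=a_1\cdots a_n$ with $n \le k$ as guaranteed by the definition; this defines a function $c:\ves \to A^*$. Property (S1) is immediate since $c(\ega)$ labels a path from $g$ to $ga$, so $c(\ega) =_G a$. For (S3), the image $c(\ves)$ is contained in the finite set $\bigcup_{n=0}^{k} A^n$. For (S2), I would invoke the argument already sketched in the excerpt: every basic relation $e' <_c e$ coming from the definition of $<_c$ corresponds to $e'$ appearing on the chosen path for $e$ with $e' < e$, so the transitive closure $<_c$ is contained in the well-founded strict partial order $<$, hence is itself a strict partial order. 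Then K\"onig's Infinity Lemma, applied to the rooted tree of predecessors of a fixed $e$ (which is finitely branching since each step expands by a path of length at most $k$), gives that only finitely many $e''$ satisfy $e'' <_c e$.

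For the reverse direction, I would start from a stacking $(\cc,c)$ as in Definition~\ref{def:fstkg}. Set $<\; :=\; <_c$, which by (S2) is a strict partial ordering on $\ves$. Well-foundedness is then forced: an infinite descending chain $e_0 >_c e_1 >_c e_2 >_c \cdots$ would produce infinitely many $<_c$-predecessors of $e_0$, contradicting the finiteness clause of (S2). Set $k := \max\{|c(e)| : e \in \ves\}$, which is finite by (S3). Given $\ega \in \ves$, write $c(\ega) = a_1\cdots a_n$; by (S1) this labels a path from $g$ to $ga$ in $\ga$, and $n \le k$ by construction. For each intermediate edge $e_i := e_{ga_1\cdots a_{i-1},a_i}$, either $e_i \in \dgd$ or $e_i \in \ves$; in the latter case $e_i <_c \ega$ holds directly from the defining relation of $<_c$. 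Thus $(\cc,<,k)$ meets the requirements of Definition~\ref{def:fstkbl}.

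The only genuinely non-routine point is the well-foundedness verification in the reverse direction, and the K\"onig's-lemma argument for (S2) in the forward direction; both are short but need to be stated explicitly since the definitions are asymmetric (one postulates well-foundedness, the other postulates finite predecessor sets). Everything else is bookkeeping against Definitions~\ref{def:fstkbl} and~\ref{def:fstkg}, with the choice function and the bound $k$ being essentially dual to each other via the finite image condition (S3).
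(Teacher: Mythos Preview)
Your proposal is correct and follows precisely the approach the paper intends: the paper's proof is literally ``From the discussion above, the following is immediate,'' and you have accurately formalized that discussion (the choice of $c$, finiteness of the image, $<_c \subseteq {<}$, and K\"onig's lemma) for the forward direction, and supplied the straightforward reverse direction (setting $< := <_c$ and $k := \max\{|c(e)|\}$, with well-foundedness from the finite-predecessor clause of (S2)) that the paper leaves implicit.
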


For a \stkg\ $(\cc,c)$, let $R_c$ be the closure of the
set $\{c(\ega)a^{-1} \mid \ega \in \ves\}$
under inversion, cyclic conjugation, and free reduction.

\begin{notation}\label{not:stksets}
For a \fstkg\  $(\cc,c)$, the function
$c$ is the {\em \stkg\ map}, 
the set $\hr$ is the {\em \stkg\ image},
the set $R_c$ is the {\em \stkg\ relation set},
%$\dgd$ is the {\em set of degenerate edges}, 
%$\ves$ is the {\em set of recursive edges}, 
and $<_c$ is the {\em \stkg\ ordering}.
\end{notation}

In essence, the two equivalent definitions of stackability
in Lemma~\ref{lem:stkbldefs} are written to display 
connections to two other properties:  The
form of Definition~\ref{def:fstkbl} closely follows
 Definition~\ref{def:ac} of
almost convexity, and 
the \stkg\ map in Definition~\ref{def:fstkg}
gives rise to rewriting operations, which we discuss next.

Starting from a \fstkg\  $(\cc,c)$ for
a group $G$ with generators $A$, we 
describe a {\em \stkg\ reduction procedure} for 
finding the normal form
for the group element associated to any word, 
by defining a rewriting operation on words over $A$, as follows. 
Whenever a word $w \in A^*$ has a decomposition 
$w=xay$ such that $x,y \in A^*$, $a \in A$, and the directed
edge $\xa$ of $\ga$ 
%labeled by $a$ with initial
%vertex labeled by (the element of $G$ represented by) $x$ 
lies in $\ves$,
then we rewrite $w \ra x c(\xa) y$.
%For each word $x \in A^*$ and letter $a \in A$,
%we let 
%where $\xa$ denotes the directed edge of $\ga$ labeled
%by $A$ with initial vertex labeled by the element of $G$
%represented by $x$.   
The definition of the
\stkg\  ordering $<_c$ says that for every directed edge $e'$ 
in the Cayley graph $\ga$ that lies along the path
labeled $c(\xa)$ from the vertex labeled $x$,
either $e'$ is
a degenerate edge in $\dgd$, or else $e' \in \ves$ and $e'<_c e$.
Then Property (S2) shows that starting from the word
$w$, there can be at most finitely many rewritings 
$w \ra w_1 \ra \cdots \ra w_m=z$ 
until a word $z$ is obtained
which cannot be rewritten with this procedure.  
The final step of the \stkg\ reduction procedure is to freely reduce 
the word $z$, resulting in a word $w'$.
%Let $z'$ be the freely reduced word obtained from $z$.  

Now $w =_G w'$, and the word $w'$ (when input into 
this procedure) is not
rewritten with the \stkg\ reduction procedure.  
Write $w'=a_1 \cdots a_n$ with
each $a_i \in A$.  Then for all $1 \le i \le n$, the
edge $e_i:=e_{a_1 \cdots a_{i-1},a_i}$ of $\ga$ 
  %from $a_1 \cdots a_{i-1}$ to $a_1 \cdots a_i$
does not lie in $\ves$, and so must be in $\dgd$.  
In the case that $i=1$, this implies that
either $y_\ep a_1 = y_{a_1}$ or $y_{a_1} a_1^{-1} = y_\ep$.
Since the normal form of the identity
is the empty word, i.e.~$y_\ep = 1$, we must have 
$y_{a_1}=a_1$.  Assume inductively that
$y_{a_1 \cdots a_i}=a_1 \cdots a_i$.  The inclusion
$e_{i+1} \in \dgd$ implies that either 
$a_1 \cdots a_i \cdot a_{i+1}=y_{a_1 \cdots a_{i+1}}$
or $y_{a_1 \cdots a_{i+1}} a_{i+1}^{-1} = a_1 \cdots a_{i}$.
However, the latter equality on words would imply
that the final letter $a_{i+1}^{-1}=a_i$, which contradicts
the fact that $w'$ is freely reduced.  Hence we have
that $w'=y_{w'}=y_w$ is in normal form, and moreover every
prefix of $w'$ is also in normal form.

That is, we have shown the following.

\begin{lemma}\label{lem:prefixclosed}
Let $G$ be a group with generating set $A$ and
let $(\cc,c)$ be a \fstkg\ for $(G,A)$.
If $R_c$ is the associated \stkg\ relation set, then
$\langle A \mid R_c\rangle$ is a finite presentation for $G$.
Moreover, the set $\cc$ of normal forms of a \fstkg\ is
closed under taking prefixes.
\end{lemma}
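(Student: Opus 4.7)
The plan is to read off both conclusions directly from the stacking reduction procedure described in the paragraphs immediately preceding the lemma statement. I would split the argument into three short pieces.

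First, I would verify that $R_c$ is finite. By (S3), $c(\ves)$ is a finite subset of $A^*$, and since $A$ is finite, the set $\{c(\ega) a^{-1} : \ega \in \ves\}$ is finite as well. Closure under inversion, cyclic conjugation, and free reduction preserves finiteness, so $R_c$ is finite. Combined with (S1), which forces every element of $R_c$ to represent $\ep$ in $G$, this gives a canonical surjective homomorphism $\pi: \langle A \mid R_c \rangle \to G$.

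Next, I would argue that $\pi$ is an isomorphism by showing that every word $w \in A^*$ with $w =_G \ep$ is trivial in $\langle A \mid R_c \rangle$. The key observation is that each rewriting step $xay \to x c(\xa) y$ of the stacking reduction procedure is an application of the relator $c(\xa) a^{-1} \in R_c$, and the subsequent free reduction uses only the trivial free-group relators; thus the class of the current word in $\langle A \mid R_c \rangle$ never changes as the procedure runs. Termination follows from (S2): each rewriting replaces an $\ves$-edge $\xa$ along the current path by finitely many edges that are either in $\dgd$ or strictly $<_c$-smaller, so the multiset of $\ves$-edges in the path strictly decreases in the multiset order induced by $<_c$, which is well-founded because $<_c$ is. The inductive analysis given in the excerpt then identifies the terminal freely-reduced word as the normal form $y_w$; since $y_\ep$ is the empty word, any $w$ with $w =_G \ep$ reduces in $\langle A \mid R_c \rangle$ to the empty word, as required.

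Finally, the prefix-closure of $\cc$ is essentially a corollary of the same induction. Applied to the input $w = y_g$, the procedure must terminate with the freely-reduced word $w' = y_g = a_1 \cdots a_n$ (by uniqueness of normal forms), and the induction then shows that each prefix $a_1 \cdots a_i$ coincides with $y_{a_1 \cdots a_i}$; this uses the freely-reduced hypothesis precisely to rule out the alternative $y_{a_1 \cdots a_{i+1}} a_{i+1}^{-1} = a_1 \cdots a_i$, which would force $a_{i+1}^{-1} = a_i$. The only non-routine step is the termination of the reduction procedure; everything else is bookkeeping on the stacking reduction already developed in the preceding paragraphs.
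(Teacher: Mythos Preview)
Your proposal is correct and follows essentially the same route as the paper: the lemma is stated as a summary of the stacking reduction procedure developed in the preceding paragraphs, and you have accurately reconstructed that argument, including the induction showing $w'=y_w$ and the prefix-closure consequence. The only minor variation is your termination argument via the multiset order on $<_c$, whereas the paper appeals directly to the finiteness of $\{e'' : e''<_c e\}$ from (S2); both are valid and yield the same conclusion.
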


We call the presentation $\langle A \mid R_c \rangle$
the {\em \stkg\ presentation}.
In the Cayley 2-complex
$X$ corresponding to this presentation,
for each edge
$e$ labeled $a$ in the set $\ves$, 
the word $c(e)a^{-1}$ is the 
label of the boundary path for a 2-cell in $X$,
that traverses the
reverse of the edge $e$.
%In this paper we will primarily be interested in groups
%that admit a \wstkg\ that results in a finite presentation
%for the group; i.e., a \wstkg\ for which the sets $\hr$
%and (therefore) $R_c$ are finite.

%\begin{definition}\label{def:stkbl}
%A tuple $(\cc,c)$ satisfying (S1-S2) for which the
%image set $\hr:=c(\ves)$ is finite
%is called a {\em \fstkg}, and a finitely generated
%group that admits such a tuple is {\em \fstkbl}.
%\end{definition}

%\begin{corollary}\label{cor:fp}
%Every \fstkbl\ group is finitely presented.
%\end{corollary}

Any prefix-closed set $\cc$ of normal
forms for $G$ over $A$
yields a maximal
tree $\ct$ in the Cayley graph $\ga$,
namely the set of edges in the paths 
in $\ga$ 
starting at $\ep$ and labeled by the words in $\cc$. 
The associated set $\dgd$ of degenerate edges 
%of the \stkg\  
is exactly
the set $\vec E(\ct)$ of directed
edges lying in this tree, and the edges of
$\ves$ are the edges of $\ga$ that do not lie
in the tree $\ct$. 
Each element $w$
of $\cc$ must be a  
{\em simple word}, meaning that $w$ labels 
a simple path, that does not repeat 
any vertices or edges,
in the Cayley graph.

We note that our \stkg\ reduction procedure for finding normal forms
for words may not be an effective
algorithm.   
To ensure that this process 
is algorithmic, we would need to be able to
recognize, given $x \in A^*$ and $a \in A$,
whether or not $\xa \in \ves$, and if so,
be able to find $c(\xa)$.  
%Essentially this means that the
%graph of the function $c$, as described by the subset
%$$\alg:=\{(w,a,c(\wa)) \mid w \in A^*, a \in A, \wa \in \ves\}$$
%of $A^* \times A \times A^*$, needs to be computable.
If we extend the map $c$ to 
a function $c':\vec E(\ga) \ra 
%\hr \cup A \subset 
A^*$ on
all directed edges in $\ga$, by defining $c'(e):=c(e)$
for all $e \in \ves$ and
$c'(e):=a$  whenever $e \in \dgd$ and $e$ has label $a$, 
then essentially this means that the 
graph of the function $c'$, as described by the subset
$$\alg:=\{(w,a,c'(\wa)) \mid w \in A^*, a \in A\}$$
of $A^* \times A \times 
%%(\hr \cup A)
A^*$, should be computable.
In that case, given any $(w,a) \in A^* \times A$, 
by enumerating the words $z$ in $A^*$ and
checking in turn whether $(w,a,z) \in \alg$, we
can find $c'(w,a)$.
(Note that 
%in the case that $\hr$ is finite,
the set $\alg$ is computable if and only if
the set $\{(w,a,c(\wa)) \mid w \in A^*, a \in A, e_{w,a} \in \ves\}$
describing the graph of $c$ is computable.  
However, using the latter set in the
\stkg\ reduction algorithm has the drawback of requiring us to 
enumerate the finite (and hence enumerable) set $\hr$,
but we may not have an algorithm to find this
set from the \stkg.)  
%See also  Remark~\ref{rmk:quasiastk}.)  
      %See p. 254 of my notes.

\begin{definition}
A group $G$ is {\em \afstkbl}
if $G$ has a finite symmetric
%presentation $\langle A \mid R \rangle$ 
generating set $A$ with a \fstkg\ 
$(\cc,c)$ such that the set $\alg$ is recursive.
\end{definition}

%We can also define the notion of {\em algorithmically
%\wstkbl}, by removing the finiteness requirement for the 
%\stkg\ image set.
We have shown the following.

\begin{proposition}\label{prop:solvwp}
If $G$ is algorithmically \fstkbl,
%\ over the finite presentation 
%$\pp=\langle A \mid R \rangle$,
then $G$ has solvable word problem.
\end{proposition}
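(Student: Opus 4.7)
The plan is to turn the \stkg\ reduction procedure described just before the statement into an effective algorithm for the word problem. On input $w \in A^*$, the algorithm runs the reduction followed by a final free reduction to produce the normal form $y_w$, and then answers ``yes'' to $w =_G \ep$ precisely when $y_w$ is the empty word; this is correct because $y_\ep = 1$ and normal forms are unique.

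First I would check that the extended map $c':\vec E(\ga) \ra A^*$ is computable. Since $\alg$ is recursive by hypothesis, for any $(w,a) \in A^* \times A$ we can enumerate words $z \in A^*$ and test membership $(w,a,z) \in \alg$; exactly one such $z$ occurs, namely $c'(\wa)$, so this search halts and returns $c'(\wa)$. With $c'(\wa)$ in hand, we can decide whether $\wa \in \dgd$ or $\wa \in \ves$: by the parenthetical note following Definition~\ref{def:fstkg}, any $e \in \ves$ satisfies $c(e) \neq_G a$, hence $c(e) \neq a$ as a word, so $\wa \in \dgd$ if and only if $c'(\wa)=a$ as a word in $A^*$.

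Next I would feed this computability into the stacking reduction procedure. Given $w$, scan the finitely many decompositions $w = xay$ with $a\in A$; if some $\xa$ lies in $\ves$, replace $w$ by $x\,c(\xa)\,y$ and iterate. Property (S2), via the termination argument recorded in the paragraph preceding Lemma~\ref{lem:prefixclosed}, guarantees that this halts after finitely many rewritings at a word $z$ to which the procedure no longer applies; then free reduction of $z$ yields a word $w'$, and the prefix-closure argument in that same paragraph shows $w' = y_w$. Since $y_\ep = 1$ and normal forms are unique, $w=_G\ep$ iff $w'$ is the empty word, which is a trivially decidable condition on the algorithm's output. The only potential obstacle is confirming that each rewriting step is effective, but this reduces to the finitely many calls to the recursive procedure for $c'$ described above, so there is nothing further to do.
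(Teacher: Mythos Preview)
Your proposal is correct and follows essentially the same approach as the paper: the paper's ``proof'' is precisely the discussion preceding the proposition (culminating in ``We have shown the following''), namely that computability of $\alg$ lets one compute $c'(\wa)$ by enumeration, which makes each step of the \stkg\ reduction procedure effective, and termination plus the prefix-closure argument yields the normal form $y_w$, which is empty iff $w=_G\ep$. One small remark: your appeal to the parenthetical note after Definition~\ref{def:fstkg} to conclude $c(e)\neq a$ as a word is slightly awkward since that note as written (with $\neq_G$) would contradict (S1); the cleaner justification is the one implicit in the paper---if $c(\ega)=a$ as a word then $\ega<_c\ega$, violating (S2)---but your conclusion and the overall argument are sound.
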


As with many other algorithmic classes of groups,
it is natural to discuss formal language theoretic
restrictions on the associated languages, and in
particular on the set of normal forms.
Computability of the set $\alg$ implies
that the set $\cc$ is computable as well 
(since any word $a_1 \cdots a_n \in A^*$ lies
in $\cc$ if and only if the word is freely
reduced and for each $1 \le i \le n$
the tuple $(a_1 \cdots a_{i-1},a_i,a_i)$ lies
in $\alg$).  Many of the examples we consider
in Section~\ref{sec:examples} will satisfy
stronger restrictions on the set $\cc$.

\begin{definition}
A group $G$ is {\em \rstkbl}
if $G$ has a finite symmetric
generating set $A$ with a \fstkg\ 
$(\cc,c)$ such that the set $\cc$ is a regular
language and the set $\alg$ is recursive.
\end{definition}

Before discussing the details of the inductive procedure 
for building fillings from {\fstkg}s, we first reduce the 
set of diagrams required.

For a group $G$ with symmetrized
presentation $\pp=\langle A \mid R \rangle$
and a set 
$\cc=\{y_g \mid g \in G\} \subseteq A^*$ 
of normal forms for $G$,
a {\em \edg} is a van Kampen diagram for a word
of the form $y_gay_{ga}^{-1}$ where
$g \in G$ and $a$ in $A$.  We can associate
this {\edg} with the directed edge
of the Cayley complex $X$ labeled by $a$ with
initial vertex labeled by $g$.
A {\em \nff} for the pair $(G,\pp)$ 
consists of a set $\cc$ normal
forms for $G$ that are simple words
(i.e. labeling simple paths in the Cayley complex for $\pp$),
%(that is, $w$ labels 
%a simple path, that does not repeat any vertices or edges,
%in the Cayley graph $\xx$), 
together with a collection
$\{\dd_e \mid e \in E(X)\}$ of {\edg}s, where
for each undirected edge $e$ in $X$, the \edg\ 
$\dd_e$ is associated to one of the two
possible directions of $e$.

Every \nff\ induces a filling, using the ``seashell''
(``cockleshell'' in \cite[Section~1.3]{riley})
method, illustrated in Figure~\ref{fig:seashell}, as follows.
\begin{figure}
\begin{center}
\includegraphics[width=3.6in,height=1.6in]{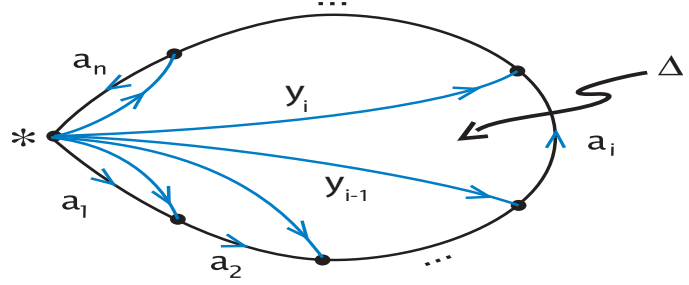}
\caption{Van Kampen diagram built with seashell procedure}\label{fig:seashell}
\end{center}
\end{figure}
Given a word $w=a_1 \cdots a_n$ representing
the identity of $G$, with each $a_i \in A$, then
for each $1 \le i \le n$, there is a \edg\ 
$\dd_i$ in the \nff\ that is associated
to the edge of $X$ with endpoints labeled by the group
elements represented by the words
$a_1 \cdots a_{i-1}$ and $a_1 \cdots a_i$.
Letting $y_i$ denote the normal form in $\cc$ representing 
$a_1 \cdots a_i$,
then the counterclockwise boundary of this diagram
is labeled by either 
$y_{i-1} a_i y_{i}^{-1}$
or $y_{i} a_i^{-1} y_{i-1}^{-1}$;
by replacing $\dd_i$ by its mirror image if necessary, we 
may take $\dd_i$ to have counterclockwise boundary 
word $x_i:=y_{i-1} a_i y_{i}^{-1}$.
We next iteratively build a van Kampen diagram 
$\dd_i'$ for the word 
$y_{\ep}a_1 \cdots a_i y_{i}^{-1}$,
beginning with $\dd_1':=\dd_1$.  For $1<i \le n$, 
the planar diagrams $\dd_{i-1}'$ and $\dd_{i}$
have boundary subpaths
sharing a common label $y_{i}$.
The fact that this word is simple, and so labels
a simple path in $X$, 
implies that any path in a van Kampen
diagram labeled by $y_i$ must also be simple, 
and hence each of these boundary paths
is an embedding.
These paths are also oriented in the same direction,
and so the diagrams $\dd_{i-1}'$ and $\dd_{i}$ can be
glued, starting at their basepoints and
folding along these subpaths,
to construct the 
planar diagram $\dd_i'$.  
Performing these gluings consecutively for each
$i$ results in a van Kampen diagram $\dd_n'$ with
boundary label $y_{\ep}wy_{w}^{-1}$.
Note that we have allowed the
possibility that some of the boundary edges of $\dd_n'$
may not lie on the boundary of a 2-cell in $\dd_n'$;
some of the words $x_i$ may freely
reduce to the empty word, and the corresponding
van Kampen diagrams $\dd_i$ may have no 2-cells.
Note also that
the only simple word representing the identity of $G$
is the empty word; that is, $y_\ep =y_{w}=1$.  
Hence $\dd_n'$ is the
required van Kampen diagram for $w$.

Again starting from a \fstkg\  $(\cc,c)$ for a group
$G$ over a finite generating set $A$, we now 
give an inductive procedure for
constructing a filling for $G$ over the \stkg\ 
presentation $\pp=\langle A \mid R_c \rangle$ as follows.  
Let $X$ be the Cayley graph of this presentation.
From the argument above, an inductive process for 
constructing a \nff\ 
from the \fstkg\  will suffice. 
The set of normal forms
for the \nff\ will  be the set 
$\cc$ from the \stkg.

We will define a \edg\  corresponding to each
directed edge in $\vec E(X)=\ves \cup \dgd$.
Let $e$ be an edge in $\vec E(X)$, oriented
from a vertex $g$ to a vertex $h$ and labeled by $a \in A$,
and let $w_e:=y_g a y_{h}^{-1}$.

In the case that $e$ lies in $\dgd$, the
word $w_e$ freely reduces to the empty
word.  Let $\dd_e$ be the van Kampen diagram for 
$w_e$ consisting of a line segment of edges,
with no 2-cells.

In the case that $e \in \ves$, we
will use Noetherian induction
to construct the \edg.
Write $c(e)=a_1 \cdots a_n$ with each $a_i \in A^*$, and for each
$1 \le i \le n$, let $e_i$
be the edge in $X$ from $ga_1 \cdots a_{i-1}$
to $ga_1 \cdots a_i$ labeled by $a_i$ in the
Cayley graph.  
For each $i$, either the directed edge $e_i$ is
in $\dgd$, or else $e_i \in \ves$ and 
$e_i <_c e$; in both cases we
have, by above or by Noetherian induction,
a van Kampen diagram $\dd_i:=\dd_{e_i}$ with boundary label
$y_{ga_1 \cdots a_{i-1}} a_i y_{ga_1 \cdots a_i}^{-1}$.
By using the ``seashell'' method, we successively 
glue the diagrams $\dd_{i-1}$, $\dd_i$ along their
common boundary words $y_{ga_1 \cdots a_{i-1}}$.
%Note that since each of these words 
Since all of these gluings are along simple paths, 
this results in a planar van Kampen diagram $\dd_e'$ with boundary
word $y_g c(e) y_{h}^{-1}$.  
(Note that by our assumption that no generator represents
the identity, $c(e)$ must contain at least one letter.)
%In the case that $x_e$
%contains no edges, we define the
%van Kampen diagram $\dd_e'$ for the word $y_g y_h^{-1}$ 
%(which freely reduces to the empty word) to be the diagram that
%is a line segment with no 2-cells, and consider the boundary
%path $x_e$ to be the constant path at the vertex
%at the terminus of the path labeled $y_g$ from the basepoint.
Finally, glue a polygonal 2-cell with boundary
label given by the relator $c(e)a^{-1}$ 
along the boundary subpath $c(e)$ in $\dd_e'$, in order to 
obtain the diagram $\dd_e$ with boundary 
word $w_e$.  Since in this step we have glued a disk onto $\dd_e'$
along an arc, the diagram $\dd_e$ is again planar, and is a \edg\ 
corresponding to $e$.
(See Figure~\ref{fig:genericstacking}.)
\begin{figure}
\begin{center}
\includegraphics[width=3.6in,height=1.6in]{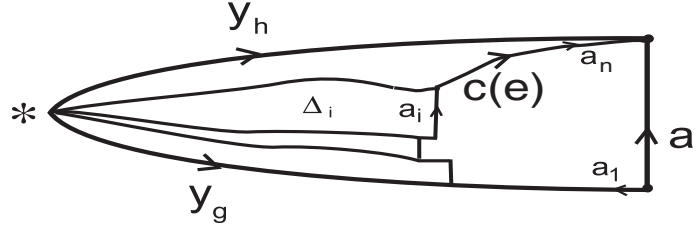}
\caption{Van Kampen diagram $\dd_e$ built from stacking}\label{fig:genericstacking}
\end{center}
\end{figure}

The final step to obtain the \nff\ associated to
the \fstkg\  is to eliminate repetitions.
Given any undirected edge $e$ in $E(X)$ choose 
$\dd_e$ to be a \edg\ constructed above for one of the
orientations of $e$.
Then the collection $\cc$ of normal forms,
together with the collection 
$\{\dd_e \mid e \in E(X)\}$
of {\edg}s, is a \nff\ for the \fstkbl\ group $G$.

\begin{definition}
A {\em \rnf} is a \nff\ that can be
constructed from a \fstkg\ by the
above procedure.  A {\em \rf} is
a filling induced by a \rnf\ using seashells.
\end{definition}

\begin{remark}\label{rmk:anfhasnf}
{\em
This \rnf\ and \rf\ both satisfy a further property which
we will exploit in our applications:
}
For every van Kampen diagram $\dd$ in the filling and 
every vertex $v$ in $\dd$,
there is an edge path in $\dd$ from the basepoint $*$ to $v$
labeled by the normal form in $\cc$ for the element
$\pi_{\dd}(v)$ in $G$.  
\end{remark}

As with our previous procedure, we have an effective
algorithm in the case that the set $\alg$ is computable.

\begin{proposition}\label{prop:fillalg}
If $G$ is \afstkbl\ over the finite 
generating set $A$, then there is an
algorithm which, upon input of a word $w \in A^*$
that represents the identity in $G$, will construct a van 
Kampen diagram for $w$ over the \stkg\ presentation.
%$\langle A \mid R_c \rangle$.
\end{proposition}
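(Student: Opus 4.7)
The plan is to observe that the inductive construction of a \rnf\ described in the paragraphs preceding the proposition is already algorithmic, provided each of its discrete ingredients --- computing normal forms, deciding whether an edge lies in $\dgd$ or $\ves$, evaluating the \stkg\ map $c$, and performing the seashell gluings --- can be carried out effectively. Computability of the graph $\alg$ of the extended \stkg\ map is precisely what supplies these ingredients, so the task reduces to assembling the subroutines and verifying termination.

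For the evaluation-and-classification subroutine, on input $(u,a) \in A^* \times A$ I would enumerate words $z \in A^*$ and test $(u,a,z) \in \alg$ until the unique such $z$ is found; then $e_{u,a} \in \dgd$ iff $z = a$, and otherwise $c(e_{u,a}) = z$. For normal forms I would apply the \stkg\ reduction procedure of Subsection~\ref{subsec:stackdef} to any input word $u$ --- iteratively rewriting $xay \mapsto x\, c(e_{x,a})\, y$ along recursive edges, then freely reducing --- which halts by Property~(S2) and returns the normal form of the element of $G$ represented by $u$. The core subroutine then constructs, for any edge $e = e_{g,a}$ specified by a pair $(x,a)$ with $x$ representing $g$, the \edg\ $\dd_e$ recursively: if $e \in \dgd$, output the line-segment diagram for the free reduction of $y_g a y_{ga}^{-1}$; if $e \in \ves$, look up $c(e) = a_1 \cdots a_n$, identify the $n$ edges along the path from $g$ labeled $c(e)$, recursively build their \edg s, seashell-glue them along their shared simple normal-form subpaths, and attach a single $2$-cell bounded by the relator $c(e)a^{-1}$. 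Each of these is a finite combinatorial operation once the subdiagrams are in hand.

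The main obstacle is termination of the recursive \edg\ routine, and this is exactly Noetherian induction on the well-founded \stkg\ ordering $<_c$: each recursive call descends strictly from $e$ to some $e_i <_c e$, and Property~(S2) guarantees that only finitely many $<_c$-predecessors of any edge exist, so the call tree is finite. With both subroutines in hand, the algorithm for the proposition is exactly the outer seashell construction preceding the statement: on input $w = a_1 \cdots a_n$ with $w =_G \ep$, compute the normal forms of the prefixes, build the \edg\ associated to each of the $n$ boundary edges of $w$, and seashell-glue them in order, producing a van Kampen diagram for $w$ over the \stkg\ presentation $\langle A \mid R_c \rangle$.
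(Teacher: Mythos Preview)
Your proposal is correct and matches the paper's approach essentially exactly. The paper does not give a separate formal proof of this proposition; it is stated as an immediate consequence of the inductive construction of the \rnf\ in the preceding paragraphs, together with the observation (made just before the proposition) that computability of $\alg$ renders each step of that construction effective---precisely the ingredients you identify and assemble.
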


Although our \stkg\ reduction procedure above for finding 
normal forms from a \fstkg\ can be used to describe the 
van Kampen diagrams in this \rf\ more directly, it is this
inductive view which will allow us to obtain
bounds on filling inequalities for \fstkbl\ groups 
in Section~\ref{sec:rnf}.

\begin{remark} {\em
For finitely generated groups that are not finitely
presented, the concept of a \stkg\ can still 
be defined, although in this case 
it makes sense to discuss {\stkg}s in terms
of a presentation for $G$, to avoid the (somewhat
degenerate) case in
which every relator is included in the presentation.
A group $G$ with symmetrized presentation
$\pp=\langle A \mid R\rangle$ is } \wstkbl\ 
{\em if there is a set $\cc=\{y_g \mid g \in G\}$
of normal forms over $A$ with $y_\ep=1$ and
a function $c:\ves \ra A^*$ satisfying
properties (S1) and (S2) of Definition~\ref{def:fstkg} together
with the condition that \stkg\ relation
set $R_c$ is a subset of $R$. 
The pair $(G,\pp)$ is} \awstkbl\ {\em if
again the set $\alg$ is computable, and}
\rwstkbl\ {\em if the set $\cc$ is a regular language
and the set $\alg$ is recursive.
The \stkg\ reduction procedure and the
inductive method for constructing van Kampen diagrams
over the presentation $\langle A \mid R_c\rangle$ 
of $G$ (and hence over $\pp$)
still hold in this more general setting.  
%In order to make these algorithmic, 
%we extend the map $c$ to 
%a function $c':\vec E(\ga) \ra \hr \cup A \subset A^*$ on
%all directed edges in $\ga$, by defining $c'(e):=c(e)$
%for all $e \in \ves$ and
%$c'(e):=a$  whenever $e \in \dgd$ and $e$ has label $a$.
%The pair $(G,\pp)$ is} \awstkbl\ {\em if the
%graph of the function $c'$, as described by the subset
%$$\alg':=\{(w,a,c'(\wa)) \mid w \in A^*, a \in A\}$$
%of $A^* \times A \times 
%%(\hr \cup A)
%A^*$, is computable.
%(Note that in the case that (S2c) holds and so $\hr$ is finite,
%%using our assumption that no element of $A$ represents $\ep$
%%and so $1 \notin \hr=c(\ves)$,
%then the set $\alg$ is computable if and only if
%the set $\alg'$ is computable.)  %See p. 254 of my notes.
}
\end{remark}

%Add a
%comment at the end of this section comparing this idea to the
%notion of fellow traveling and the {\nff}s that are inherently
%connected to this notion.?

\vspace{.1in}

%%%%%%%%%%%%%%%%%%%%%%%%%%%%%%%%%%%%%%%%%%%%%%%%%%%%%%%%%%%%%%%%%%%%%%%%%%%%
%%%%%%%%%%%%%%%%%%%%%%%%%%%%%%%%%%%%%%%%%%%%%%%%%%%%%%%%%%%%%%%%%%%%%%%%%%%%
%%%%%%%%%%%%%%%%%%%%%%%%%%%%%%%%%%%%%%%%%%%%%%%%%%%%%%%%%%%%%%%%%%%%%%%%%%%%

\subsection{Tame filling inequalities: Definitions and 
  motivation} \label{subsec:tfdef}

%%%%%%%%%%%%%%%%%%%%%%%%%%%%%%%%%%%%%%%%%%%%%%%%%%%%%%%%%%%%%%%%%%%%%%%%%%%%
%%%%%%%%%%%%%%%%%%%%%%%%%%%%%%%%%%%%%%%%%%%%%%%%%%%%%%%%%%%%%%%%%%%%%%%%%%%%

$~$

\vspace{.1in}

Throughout this section, we assume that $G$ is 
a finitely presented group, with finite symmetrized presentation
$\pp=\langle A \mid R \rangle$.
We begin with a description of the \dmc\ 
filling inequalities which motivate the tame filling invariants
introduced in this paper.

Let $X$ be the Cayley complex for the presentation $\pp$, 
let $\xx$ be the 1-skeleton of $X$ (i.e., the Cayley graph)
and let $d_X$ be the path metric on $\xx$.
Given any word $w \in A^*$, let $l(w)$ denote the length
of this word in the free monoid.  By slight abuse of notation,
$d_X(\ep,w)$ then denotes the length of the element of $G$
represented by the word $w$, where as usual 
$\ep$ denotes the
identity element of the group $G$.
For any van Kampen diagram $\dd$ with basepoint $*$, let
$d_\dd$ denote the path metric on the 1-skeleton $\dd^1$. 
Recall that $\pi_\dd:\dd \ra X$ is the canonical map
that maps edges to edges preserving both label and
and direction, and satisfies $\pi_\dd(*)=\ep$.

\begin{definition} \label{def:idi}
A group $G$ with finite presentation $\pp$ satisfies an {\em intrinsic \dmi}
for a nondecreasing function $f:\N \ra \N$ if 
for all $w \in A^*$ with $w=_G \ep$,
there exists a van Kampen diagram $\dd$ for $w$ over $\pp$ such that
for all vertices $v$ in $\dd^0$
we have 
$d_\dd(*,v) \le f(l(w))$.

The pair $(G,\pp)$ satisfies an {\em extrinsic \dmi}
for a nondecreasing function $f:\N \ra \N$ if 
for all $w \in A^*$ with $w=_G \ep$,
there exists a van Kampen diagram $\dd$ for $w$ over $\pp$ such that
for all vertices $v$ in $\dd^0$
we have 
$d_X(\ep,\pi_\dd(v)) \le f(l(w))$.
\end{definition}

There are minimal such nondecreasing functions for any
pair $(G,\pp)$, namely the {\em intrinsic diameter function}
or {\em isodiametric function}, and the   
{\em extrinsic diameter function}.
See, for example, the exposition
in \cite[Chapter II]{riley} for more details on these two diameter functions.

\begin{comment}
\begin{notation} \label{def:ided}
For a (nondecreasing) function $f:\N \ra \N$, let
%finitely presented group $G$ and a function $f:\N \ra \N$
%we write 
$ID_f$
[respectively, $ED_f$] 
denote the class of groups $G$
such that for some (and hence any) finite presentation
$\pp$ of $G$, the pair
$(G,\pp)$ satisfies an intrinsic [resp. extrinsic] 
\dmi\  with respect to a 
nondecreasing function that
is Lipschitz equivalent to $f$.
\end{notation}
\end{comment}

We build a 3-dimensional view of the 1-skeleton $\dd^1$ of
a van Kampen diagram $\dd$ for a word $w$ 
by considering the following subsets of
$\dd^1 \times \R$:
\begin{eqnarray*}
 \dd^i & := & \{(p,d_\dd(*,p)) \mid p \in \dd^1\} \\
 \dd^e & := & \{(p,d_X(\ep,\pi_\dd(p))) \mid p \in \dd^1\} 
\end{eqnarray*}
Viewing the set of points $\dd^1 \times \{0\}$
at ``ground level'', and points $(p,r)$ at height $r$, 
the sets $\dd^i$ and $\dd^e$ transform the planar van Kampen
diagram into a topographical landscape of hills and valleys,
as in Figure~\ref{fig:hills}.
\begin{figure}
\begin{center}
\includegraphics[width=5in,height=1.6in]{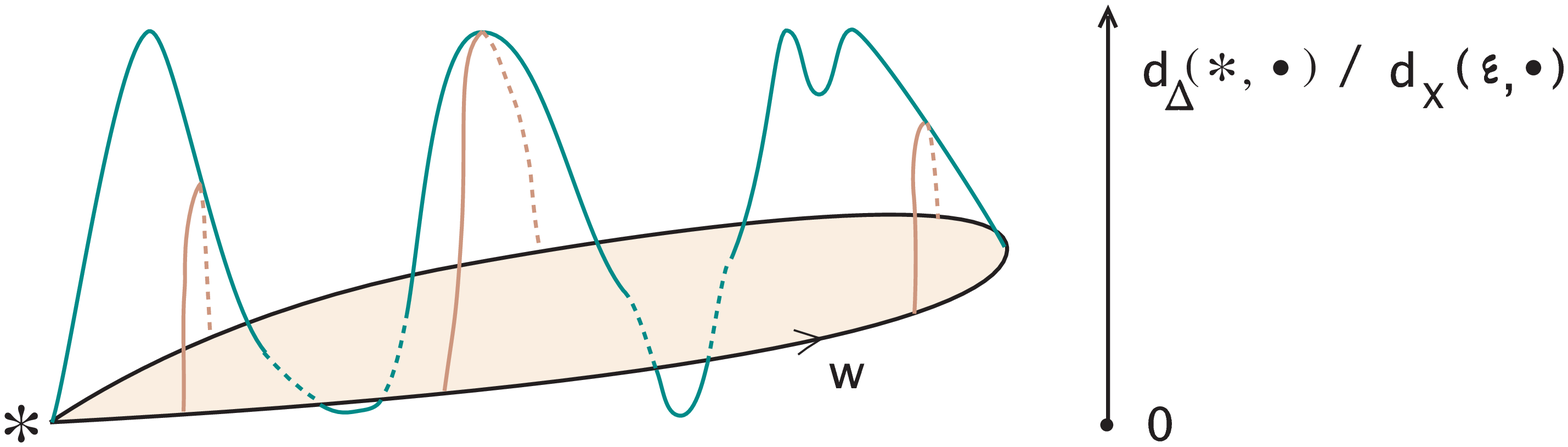}%old id4.eps
\end{center}
\caption{Topographic view of van Kampen diagram}\label{fig:hills}
\end{figure}
A pair $(G,\pp)$ satisfies an intrinsic (resp. extrinsic)
\dmi\  for a
function $f:\N \ra \N$ if for every word $w$ representing the
identity of $G$, there is a van Kampen diagram $\dd$ for $w$ such
that the height of the highest peak in $\dd^i$ (resp. $\dd^e$) is
at most $f(l(w))+\frac{1}{2}$ (the constant $\frac{1}{2}$ takes
into account that we have kept the edges in our picture). 

These \dmc\  functions are quite coarse, in that they
do not measure whether only a few such peaks occur or whether
there are many peaks near this maximum height with deep valleys
between.  Saying this in another way, the \dmc\  functions
do not distinguish how wildly or tamely the peaks and
valleys occur in van Kampen diagrams.  In order to do this,
we refine the notion of a \dmc\  function to that of
a tame filling function, as follows.  

To begin, we define a collection of paths along which
we will measure the tameness of the hills. Intuitively,
these paths are a continuously chosen ``combing'' of
the boundary of the van Kampen diagram, as illustrated
in Figure~\ref{fig:vkhtpy}.   More formally, we have
the following.
\begin{figure}
\begin{center}
\includegraphics[width=2.2in,height=1.1in]{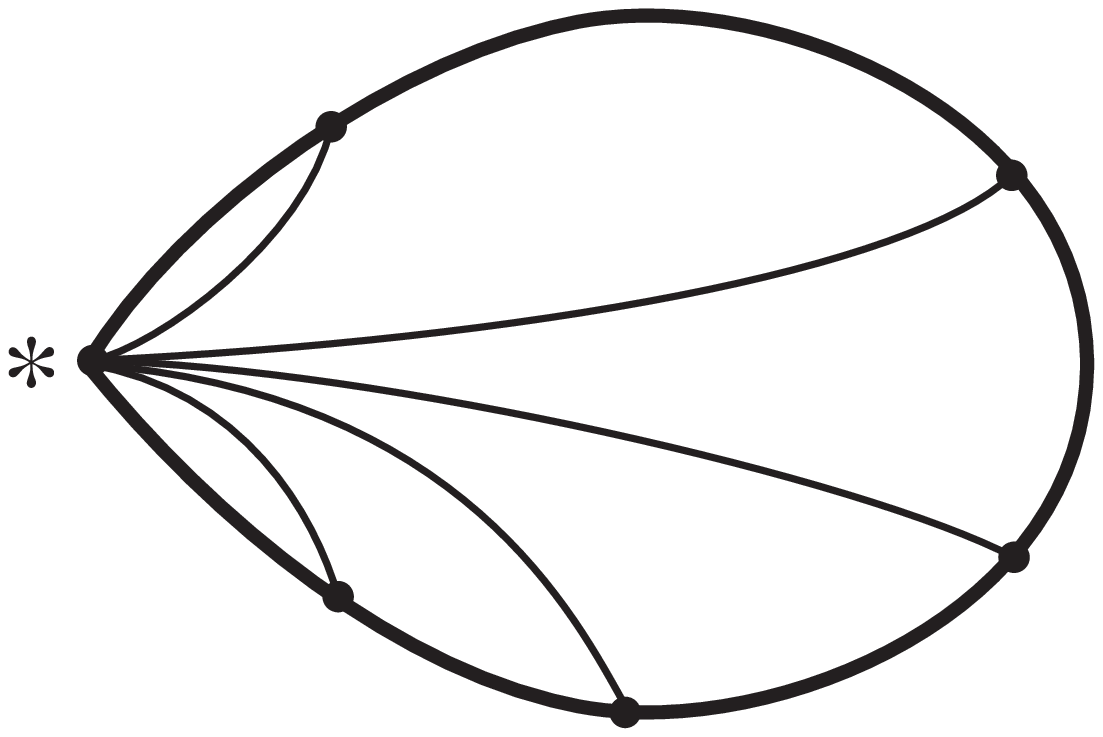}%old disk.eps
\hspace{.3in} \includegraphics[width=2.2in,height=1.1in]{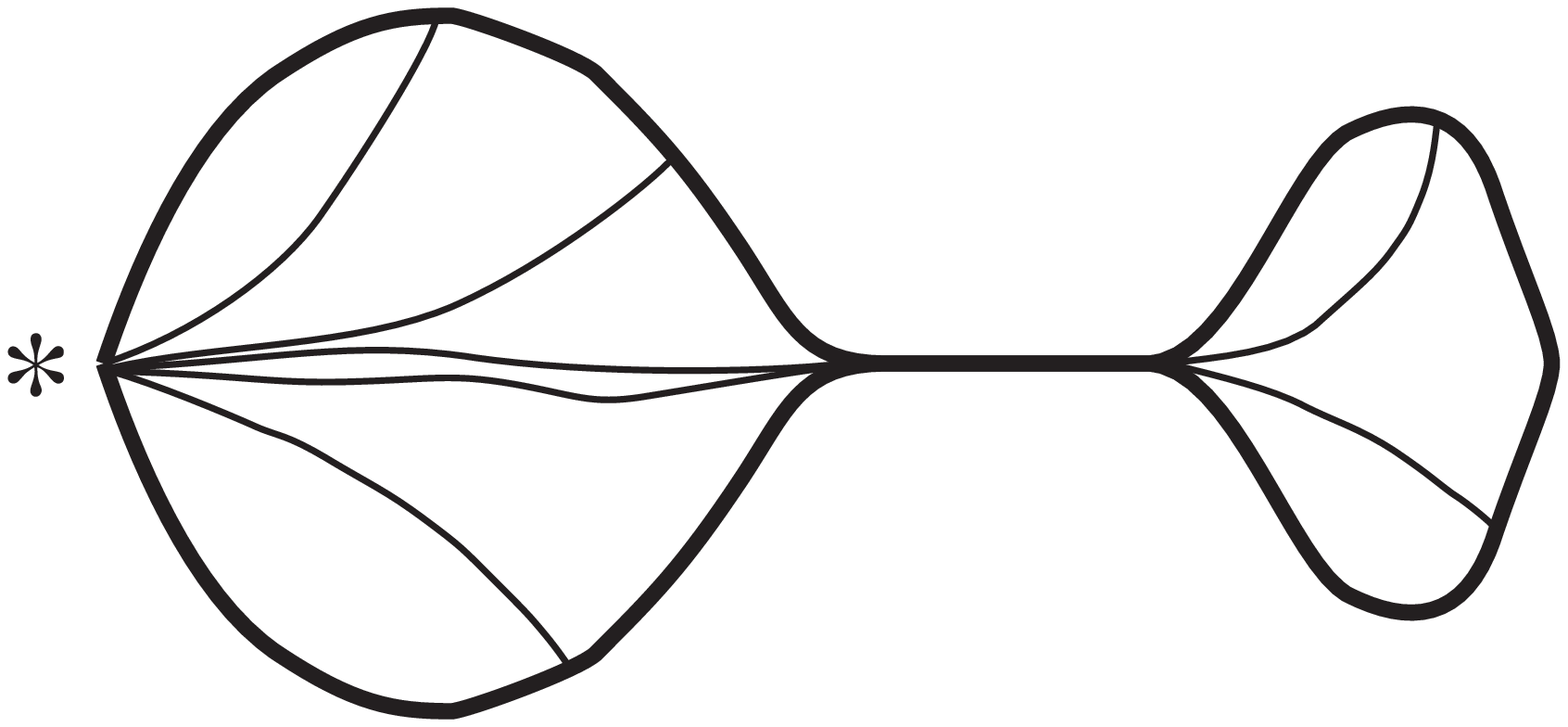}%old vkd.eps
\end{center}
\caption{Van Kampen homotopies}\label{fig:vkhtpy}
\end{figure}

\begin{definition} \label{def:vkh}
A {\em van Kampen homotopy} of a van Kampen diagram $\dd$ is a continuous
function $\ps:\bo \dd \times [0,1] \ra \dd$ satisfying:
\begin{enumerate} 
\item whenever $p \in \bo \dd$, then $\ps(p,0)=*$ and 
$\ps(p,1)=p$,
\item whenever $t \in [0,1]$, then $\ps(*,t)=*$, 
and
\item whenever $p \in (\bo \Delta)^0$, 
then $\ps(p,t) \in \Delta^1$ for all $t \in [0,1]$.
\end{enumerate}
\end{definition}

The \dms\  require a filling; that is,
a collection
of van Kampen diagrams for all words representing $\ep$.
Analogously, our refinement will require
a collection $\{(\dd_w,\ps_w) \mid w \in A^*, w=_G \ep\}$
such that for each $w$, $\dd_w$ is a van Kampen diagram
with boundary word $w$, and $\ps_w:\bo \dd_w \times [0,1] \ra \dd_w$ 
is a van Kampen homotopy.
We call such a collection a {\em \cfl} for the pair $(G,\pp)$.

To streamline notation later, it will be helpful to 
be able to measure the height (i.e., distance to the
basepoint) of each point in the van Kampen diagram, 
rather than just the height of points in the 1-skeleton.
Although we do not necessarily have a metric on a
Cayley 2-complex or van Kampen diagram, we can define a
coarse notion of distance in any
2-complex $Y$ as follows.

\begin{definition} \label{def:td}
Let $Y$ be a combinatorial 2-complex with basepoint vertex $y \in Y^0$, and
let $p$ be any point in $Y$.  Define the {\em coarse distance}
$\td_Y(y,p)$ by:
\begin{itemize}
\item If $p$ is a vertex, then
$\td_Y(y,p):=d_Y(y,p)$ is the path metric distance between
the vertices $y$ and $p$ in the graph $Y^1$.  
\item If $p$ is 
in the interior $Int(e)$ of an edge $e$ of $Y$, then
\\
$\td_Y(y,p) := \min\{\td_Y(y,v) \mid v \in \bo(e)\} + \frac{1}{2}$.
\item If $p$ is in the interior of a 2-cell $\sigma$ of $Y$, then 
\\
$~$ \hspace{-1in} $\td_Y(y,p) := \max\{\td_Y(y,q) \mid q \in Int(e)$
for edge $e$ of $\bo(\sigma)\}-\frac{1}{4}$.
\end{itemize}
\end{definition}

%Using this coarse distance, for any rational number $q \in \nn$
%we define the {\em ball} of radius $q$ in $Y$ to be
%$\tb_Y(q) := \{p \in Y \mid \td_Y(y,p) \le q\}$.  

We can now expand the
3-dimensional view above of a
van Kampen diagram $\dd$, by considering the following
subsets of $\dd \times \R$:
\begin{eqnarray*}
 \tdd^i & := & \{(p,\td_\dd(*,p)) \mid p \in \dd\} \\
 \tdd^e & := & \{(p,\td_X(\ep,\pi_\dd(p))) \mid p \in \dd\} 
\end{eqnarray*}
Our measure of tameness is defined by how high
a van Kampen homotopy path can climb in the second
coordinate of these sets,
and yet still return to a much lower elevation later.

%\begin{figure}
%\begin{center}
%\hspace{2in} \includegraphics[width=4in,height=2.2in]{it7.eps}
%\caption{Topographic view of van Kampen homotopy}\label{fig:topovkh}
%\end{center}
%\end{figure}

\begin{definition}\label{def:iti}
A group $G$ with finite
presentation $\pp$ satisfies an {\em intrinsic \tfi}
for a nondecreasing function $f:\nn \ra \nn$ if 
for all $w \in A^*$ with $w=_G \ep$,
there exists a van Kampen diagram $\dd$ for $w$ over $\pp$ 
and a van Kampen homotopy $\ps:\bo \dd \times [0,1] \ra \dd$
such that
\begin{description}
\item[$(\dagger^i)$] 
for all 
$p \in \bo \dd$ and $0 \le s < t \le 1$, we have 
\[ \td_\dd(*,\ps(p,s)) \le f(\td_\dd(*,\ps(p,t)))~. \]
\end{description}
\end{definition} 

In the topographic view, we compose the van Kampen homotopy
$\ps$ with the vertical projection $\nu: \dd \ra \tdd^i$.  Given
any point $p$ in the boundary of $\dd$, the composition 
$\nu \circ \ps(p,\cdot):[0,1] \ra \dd^i$ gives a 
(discontinuous, in general) path from $(*,0)$ 
to $(p,\td_\dd(*,p))$, i.e. from height 0 to height $\td_\dd(*,p)$,
such that
if at any ``time'' $s \in [0,1]$ the path has reached a height above
$f(q)$ for some $q \in \nn$, then at all later times $t>s$,
the path cannot return downward to a height at or below $q$.
Essentially, the \tfi\  implies that the paths in $\tdd^i$ rising
from the basepoint up to the boundary must 
go upward steadily, and not keep returning to significantly
lower heights.

As with the diameter functions above, we also consider
the extrinsic version of this filling function, which has
a similar interpretation using the projection to $\tdd^e$. 

\begin{definition}\label{def:eti}
A group $G$ with finite presentation $\pp$
satisfies an {\em extrinsic \tfi}
for a nondecreasing function $f:\nn \ra \nn$ if 
for all $w \in A^*$ with $w=_G \ep$,
there exists a van Kampen diagram $\dd$ for $w$ over $\pp$
and a van Kampen homotopy $\ps:\bo \dd \times [0,1] \ra \dd$
such that 
\begin{description}
\item[$(\dagger^e)$] 
for all $p \in \bo \dd$ and $0 \le s < t \le 1$,
we have 
\[ \td_X(\ep,\pi_\dd(\ps(p,s))) \le f(\td_X(\ep,\pi_\dd(\ps(p,t))))~.
\]
\end{description}
\end{definition} 

%We define the {\em intrinsic ball} of radius $q$ in $\dd$ to be
%$\tb_\dd(q) := \{p \in \dd \mid \td_\dd(*,p) \le q\}$, and
%similarly the {\em extrinsic ball} to be 
%$\tb_X(q) := \{p \in \dd \mid \td_X(\ep,\pi_\dd(p)) \le q\}$.
%From our topographic viewpoint, these ball are the sets of points
%whose intrinsic height is at most $q$ in $\tdd^i$ and $\tdd^e$,
%respectively.  

In contrast to the definition of \dms,
the definition of \tfi\  does not depend on
the length $l(w)$ of the word $w$.  Indeed, the property that
a homotopy path $\ps(p,\cdot)$ cannot return to an elevation
below $q$ after it has reached a height above $f(q)$ is 
uniform for all reduced words over $A$ representing $\ep$.
As a consequence, it is not clear whether every pair
$(G,\pp)$ has a (intrinsic or extrinsic) \tfi\ 
for a finite-valued function.

%%%%%%%%%%%%%%%%%%%%%%%%%%%%%%%%%%%%%%%%%%%%%%%%%%%%%%%%%%%%%%%%%%%%%%%%%%%%
%%%%%%%%%%%%%%%%%%%%%%%%%%%%%%%%%%%%%%%%%%%%%%%%%%%%%%%%%%%%%%%%%%%%%%%%%%%%
%%%%%%%%%%%%%%%%%%%%%%%%%%%%%%%%%%%%%%%%%%%%%%%%%%%%%%%%%%%%%%%%%%%%%%%%%%%%

\section{Relationships among filling invariants}\label{sec:relationships}

%%%%%%%%%%%%%%%%%%%%%%%%%%%%%%%%%%%%%%%%%%%%%%%%%%%%%%%%%%%%%%%%%%%%%%%%%%%%
%%%%%%%%%%%%%%%%%%%%%%%%%%%%%%%%%%%%%%%%%%%%%%%%%%%%%%%%%%%%%%%%%%%%%%%%%%%%

The following proposition shows that for finitely
presented groups, tameness inequalities imply
\dms.

\begin{proposition} \label{prop:itimpliesid}
If a group $G$ with finite presentation $\pp$ satisfies an 
intrinsic [resp. extrinsic]
\tfi\  for a nondecreasing function $f:\nn \ra \nn$,
then the pair $(G,\pp)$ also satisfies an intrinsic [resp. extrinsic] 
\dmi\  for the function $\hat f:\N \ra \N$ defined
by $\hat f(n) = \lceil f(n) \rceil$.
\end{proposition}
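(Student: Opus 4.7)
The plan is to show that the same van Kampen diagram supplied by the \tfi\ already witnesses the \dmi. Given $w\in A^*$ with $w=_G\ep$, apply the hypothesized \tfi\ to obtain a van Kampen diagram $\dd$ for $w$ together with a van Kampen homotopy $\ps:\bo\dd\times[0,1]\to\dd$ satisfying $(\dagger^i)$. I will argue that this $\dd$ satisfies the intrinsic \dmi\ for $\hat f$.

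The first step is to bound the coarse distance to $*$ of every point in the image of $\ps$. For any $p\in\bo\dd$ one has $\td_\dd(*,p)\le l(w)$, since a boundary vertex is reachable along the shorter of the two boundary arcs in at most $\lfloor l(w)/2\rfloor$ steps, and a point interior to a boundary edge adds only $\tfrac{1}{2}$. Setting $t=1$ in $(\dagger^i)$ and using monotonicity of $f$ then gives
\[
  \td_\dd(*,\ps(p,s))\le f(\td_\dd(*,p))\le f(l(w))
\]
for every $p\in\bo\dd$ and $s\in[0,1]$.

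The second step is to observe that every vertex of $\dd$ actually lies in the image of $\ps$. The homotopy $\ps$ descends to a continuous map from the disk obtained by collapsing $\bo\dd\times\{0\}$ to a point, and this descended map restricts to a homeomorphism on the boundary circle; since $\dd$ is a simply connected planar 2-complex, a standard degree argument forces surjectivity onto the interior. Thus any $v\in\dd^0$ can be written as $v=\ps(p,s)$, whence $d_\dd(*,v)=\td_\dd(*,v)\le f(l(w))$. Integrality of $d_\dd(*,v)$ then upgrades this to $d_\dd(*,v)\le\lceil f(l(w))\rceil=\hat f(l(w))$, which is the intrinsic \dmi.

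The extrinsic case is structurally identical, using $\td_X(\ep,\pi_\dd(-))$ in place of $\td_\dd(*,-)$ and $(\dagger^e)$ in place of $(\dagger^i)$, together with the analogous bound $\td_X(\ep,\pi_\dd(p))\le l(w)$ for $p\in\bo\dd$ (since $\pi_\dd$ maps boundary paths to Cayley graph paths of equal length). The only technical point requiring care is the surjectivity of $\ps$ onto the vertex set; for reduced diagrams this is immediate from the degree-theoretic argument above, while in the non-reduced case one may need to perturb $\ps$ slightly so that it visits each vertex, without disturbing $(\dagger^i)$ or $(\dagger^e)$.
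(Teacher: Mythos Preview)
Your proof is correct and follows essentially the same approach as the paper: use the tame filling diagram and homotopy, bound the coarse distance of boundary points by (half) the word length, apply $(\dagger^i)$ at $t=1$, and invoke surjectivity of $\ps$ onto the vertex set. The paper uses the slightly sharper bound $\td_\dd(*,p)\le l(w)/2$ and simply asserts the surjectivity from continuity, whereas you spell out the degree argument; your final caveat about reduced versus non-reduced diagrams is unnecessary, since the degree/winding-number argument depends only on the fact that $\dd$ is planar and contractible and that $\ps(\cdot,1)$ is the boundary inclusion, which holds regardless of whether adjacent 2-cells cancel.
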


\begin{proof}
We prove this for intrinsic tameness; the extrinsic proof is similar.
Let $w$ be any word over the generating set $A$ of the
presentation $\pp$ representing the trivial element $\ep$
of $G$, and let $\dd$, $\ps$ be a van Kampen diagram and
homotopy for $w$ satisfying the condition $(\dagger^i)$. 
Since the function $\ps$ is continuous,
each vertex $v \in \dd^0$ satisfies $v=\ps(p,s)$ for some $p \in \bo \dd$
and $s \in [0,1]$.  There is an edge
path along $\bo \dd$ from $*$ to $p$ labeled by at most
half of the word $w$, and so 
$\td_\dd(*,p) \le \frac{l(w)}{2}$.  Using the facts that 
$p=\ps(p,1)$ and $s \le 1$, 
condition $(\dagger^i)$ implies that $\td_\dd(*,v) \le f(\frac{l(w)}{2})$.
Since $f$ is nondecreasing, the result follows.
%and the coarse distance
%$\td_\dd(*,v)$ from $*$ to a vertex $v$ is a natural number,
%then $\td_\dd(*,v) \le \ceil 
\end{proof}

In \cite{bridsonriley}, Bridson and Riley give an example
of a finitely presented group $G$ whose (minimal) intrinsic and
extrinsic diameter functions are not Lipschitz
equivalent.  
While we have not resolved the relationship
between \tfs\  in general, we give bounds
on their interconnections in Theorem~\ref{thm:itversuset}. 
These relationships between
the intrinsic and extrinsic \tfs\ 
are applied in examples later in this paper. 
%in a somewhat
%restricted context, which occurs in each of our examples
%and applications
%in Sections~\ref{sec:examples} and~\ref{sec:applications}.

\begin{theorem}\label{thm:itversuset}
Let $G$ be a finitely presented group with Cayley complex $X$ 
and \cfl\ $\cld$.
Suppose that $j:\N \ra \N$ is a nondecreasing function such that
for every vertex $v$ of a van Kampen diagram $\dd$ in $\cld$,
$d_{\dd}(*,v) \le j(d_X(\ep,\pi_{\dd}(v)))$, and
let $\tj:\nn \ra \nn$ be
defined by $\tj(n):=j(\lceil n \rceil)+1$.  
\begin{enumerate}
\item If $G$ satisfies an extrinsic \tfi\ for the function 
$f$ with respect to $\cld$, then $G$ 
satisfies an intrinsic \tfi\ for the function $\tj \circ f$.
\item If $G$ satisfies an intrinsic \tfi\ for the function 
$f$ with respect to $\cld$, then $G$ 
satisfies an extrinsic \tfi\ for the function $f \circ \tj$.
\end{enumerate}
\end{theorem}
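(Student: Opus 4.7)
The plan is to reduce both (1) and (2) to a single ``pointwise'' comparison lemma and then chain it with the tfi hypothesis in each direction. Specifically, I want to show that the hypothesis $d_\dd(*,v)\le j(d_X(\ep,\pi_\dd(v)))$, stated only at vertices, upgrades to
\[
\td_\dd(*,p)\ \le\ \tj\bigl(\td_X(\ep,\pi_\dd(p))\bigr) \qquad (\star)
\]
for \emph{every} point $p\in\dd$ and every $\dd\in\cld$. Once $(\star)$ is in hand, the two parts are essentially a two-line monotonicity chase, using also the trivial inequality
\[
\td_X(\ep,\pi_\dd(p))\ \le\ \td_\dd(*,p) \qquad (\star\star)
\]
which holds for all $p$ because $\pi_\dd$ is a cellular label/direction-preserving map, so it cannot increase path length, and the defining minima/maxima for $\td$ respect this edge-by-edge comparison.

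The proof of $(\star)$ is a case analysis on where $p$ sits. If $p$ is a vertex, the hypothesis and the definitions give $\td_\dd(*,p)=d_\dd(*,p)\le j(d_X(\ep,\pi_\dd(p)))\le \tj(\td_X(\ep,\pi_\dd(p)))$. If $p$ lies in the interior of an edge $e$ with endpoints $u_1,u_2$, let $u_1$ realize the intrinsic minimum, so $\td_\dd(*,p)=d_\dd(*,u_1)+\tfrac12$. The point is that $u_1$ need not realize the extrinsic minimum $u'$, but $|d_X(\ep,\pi_\dd(u_1))-d_X(\ep,\pi_\dd(u'))|\le 1$ since they are joined by an edge of $X$; hence $d_X(\ep,\pi_\dd(u_1))\le d_X(\ep,\pi_\dd(u'))+1$, and applying $j$ (nondecreasing) and the identity $\lceil\td_X(\ep,\pi_\dd(p))\rceil=d_X(\ep,\pi_\dd(u'))+1$ yields $\td_\dd(*,p)\le j(\lceil\td_X(\ep,\pi_\dd(p))\rceil)+\tfrac12<\tj(\td_X(\ep,\pi_\dd(p)))$. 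If $p$ is interior to a $2$-cell $\sigma$, take an edge-interior point $q_1$ realizing the intrinsic max over $\bo\sigma$; then $\td_\dd(*,p)=\td_\dd(*,q_1)-\tfrac14$, the edge case gives $\td_\dd(*,q_1)\le \tj(\td_X(\ep,\pi_\dd(q_1)))$, and $\td_X(\ep,\pi_\dd(q_1))$ is bounded by the extrinsic max, which differs from $\td_X(\ep,\pi_\dd(p))$ only by $\tfrac14$ in a way that is absorbed by $\lceil\cdot\rceil$ (since both half-integer forms $n+\tfrac12$ and $n+\tfrac14$ round up to $n+1$). This keeps $\tj$ constant across the two values and establishes $(\star)$.

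With $(\star)$ and $(\star\star)$ available, part (1) proceeds by taking $(\dd,\ps)$ witnessing the extrinsic tfi for $f$ and estimating, for $p\in\bo\dd$ and $0\le s<t\le1$,
\[
\td_\dd(*,\ps(p,s))\ \stackrel{(\star)}{\le}\ \tj\bigl(\td_X(\ep,\pi_\dd(\ps(p,s)))\bigr)\ \stackrel{(\dagger^e)}{\le}\ \tj\bigl(f(\td_X(\ep,\pi_\dd(\ps(p,t))))\bigr)\ \stackrel{(\star\star)}{\le}\ \tj\bigl(f(\td_\dd(*,\ps(p,t)))\bigr),
\]
using nondecreasing-ness of $f$ and $\tj$, which is $(\dagger^i)$ for $\tj\circ f$. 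Part (2) is the symmetric chain:
\[
\td_X(\ep,\pi_\dd(\ps(p,s)))\ \stackrel{(\star\star)}{\le}\ \td_\dd(*,\ps(p,s))\ \stackrel{(\dagger^i)}{\le}\ f(\td_\dd(*,\ps(p,t)))\ \stackrel{(\star)}{\le}\ f\bigl(\tj(\td_X(\ep,\pi_\dd(\ps(p,t))))\bigr),
\]
which is $(\dagger^e)$ for $f\circ\tj$. The same $(\dd,\ps)\in\cld$ serves as the witness in both directions, so no new combed filling needs to be built.

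The only real obstacle is the edge case in $(\star)$: the asymmetry between the intrinsic-minimizing endpoint and the extrinsic-minimizing endpoint of an edge forces the $+1$ in the definition $\tj(n)=j(\lceil n\rceil)+1$. Everything else is bookkeeping with the three-case definition of $\td_Y$ and monotonicity.
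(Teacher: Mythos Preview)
Your proposal is correct and follows essentially the same approach as the paper: both first upgrade the vertex hypothesis to the pointwise inequality $\td_\dd(*,p)\le\tj(\td_X(\ep,\pi_\dd(p)))$ (your $(\star)$, the paper's inequality $(a)$), pair it with the trivial bound $\td_X(\ep,\pi_\dd(p))\le\td_\dd(*,p)$, and then chain through the tame filling hypothesis using monotonicity of $f$ and $\tj$. The only difference is cosmetic: where you do a three-case analysis (vertex, edge interior, $2$-cell interior) to establish $(\star)$, the paper handles all cases at once by choosing a single vertex $v$ in the closure of the open cell containing $p$ with maximal intrinsic coarse distance and observing $\td_\dd(*,p)\le d_\dd(*,v)+1$ and $d_X(\ep,\pi_\dd(v))\le\lceil\td_X(\ep,\pi_\dd(p))\rceil$.
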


\begin{proof}
We begin by showing that the inequality restriction
for $j$ on vertices holds for the function $\tj$ on all
points in the van Kampen diagrams in $\cld$, using the fact that
coarse distances on edges and 2-cells are closely
linked to those of vertices.
Let $(\dd,\ps)
%=(\dd_w,\ps_w)
 \in \cld$
and let $p$ be any point in $\dd$.
Among the vertices
in the boundary of the open cell of $\dd$ containing $p$, 
let $v$ be the vertex whose coarse
distance to the basepoint $*$ is maximal.
Then $\td_{\dd}(*,p) \le \td_{\dd}(*,v)+1$.
Moreover, $\pi_{\dd}(v)$ is again a vertex
in the boundary of the open cell of $X$ 
containing $\pi_{\dd}(p)$, and so
$\td_X(\ep,\pi_{\dd}(v)) \le \lceil \td_X(\ep,\pi_{\dd}(p)) \rceil$.
Applying the fact that $j$ is nondecreasing, then 
$\td_{\dd}(*,p) \le \td_{\dd}(*,v)+1
\le j(\td_{X}(\ep,\pi_{\dd}(v)))+1 \le 
j(\lceil \td_X(\ep,\pi_{\dd_w}(p)) \rceil) +1~.$
Hence the second inequality in 
\[ \td_X(\ep,\pi_{\dd}(p)) \le \td_{\dd}(*,p) 
\le \tj(\td_X(\ep,\pi_{\dd}(p))) \hspace{0.8in} (a)
\]
follows. The first inequality is a consequence
of the fact that coarse distance can only be
preserved or decreased by the natural map $\pi_\dd$
from any van Kampen diagram to the Cayley complex.

Now suppose that $G$ (with its finite presentation) satisfies an 
extrinsic \tfi\ for the function 
$f:\nn \ra \nn$ with respect to $\cld$.
Then for all $p \in \dd$ and for all $0 \le s < t \le 1$, we have
%\begin{align*}
$
\td_{\dd}(*,\ps(p,s)) \le \tj(\td_X(\ep,\pi_{\dd}(\ps(p,s))))
\le \tj(f(\td_X(\ep,\pi_{\dd}(\ps(p,t)))))
\le \tj(f(\td_{\dd}(*,\ps(p,t))))
$
%\end{align*}
where the first and third inequalities follow from $(a)$,
and the second uses the extrinsic \tfi\ and the nondecreasing
property of $\tj$.  This completes the proof of (1).

The proof of (2) is similar.
\end{proof}

%\begin{remark}\label{rmk:qgitisqget}
%{\em
%In Theorem~\ref{thm:itversuset}, if the function
%$g$ is linear, then the 
%}
%\end{remark} 

%%%%%%%%%%%%%%%%%%%%%%%%%%%%%%%%%%%%%%%%%%%%%%%%%%%%%%%%%%%%%%%%%%%%%%%%%%%%
%%%%%%%%%%%%%%%%%%%%%%%%%%%%%%%%%%%%%%%%%%%%%%%%%%%%%%%%%%%%%%%%%%%%%%%%%%%%
%%%%%%%%%%%%%%%%%%%%%%%%%%%%%%%%%%%%%%%%%%%%%%%%%%%%%%%%%%%%%%%%%%%%%%%%%%%%

\section{Alternative views for \tfs}\label{sec:relax}

%%%%%%%%%%%%%%%%%%%%%%%%%%%%%%%%%%%%%%%%%%%%%%%%%%%%%%%%%%%%%%%%%%%%%%%%%%%%
%%%%%%%%%%%%%%%%%%%%%%%%%%%%%%%%%%%%%%%%%%%%%%%%%%%%%%%%%%%%%%%%%%%%%%%%%%%%

The definitions of intrinsic and extrinsic
\tfs\ for finitely presented groups
require a van Kampen diagram
for each word over the generators that represents
the trivial element of the group; i.e., a filling.  
In our first alternative view,
we show that the required 
collection of diagrams can be reduced to a \nff, at 
the cost of altering the direction of the homotopy paths.
We will apply this view in Section~\ref{sec:rnf} in
developing an algorithm to bound \tfs\ for \fstkbl\ groups.

In Definition~\ref{def:vkh},
our definition of a van Kampen homotopy 
$\ps:\bo \dd \times [0,1] \ra \dd$
is ``natural'', in the sense that the
first factor in the domain of this function 
is a subcomplex of the associated van Kampen
diagram $\dd$.  This
requires that for each point $p$ on an
edge $e$ of $\bo \dd$,
there is a unique choice of path from
the basepoint $*$ to $p$ via this homotopy.
However, when traveling along the boundary word 
$w$ counterclockwise around $\dd$, this
point $p$ (and undirected edge $e$) may be traversed more
than once, and it can be convenient to
have different combings of this edge corresponding
to the different traversals.  
In this section, we also show that 
a \tfi\ with respect to this
more relaxed condition is equivalent to
the \tfi\ defined in Section~\ref{subsec:tfdef}.
This second alternative view of the invariants will prove
useful in Section~\ref{sec:qiinv}, in our proof of the
quasi-isometry invariance of \tfs.

We begin with definitions to make both of
these statements more precise. 
%As before, throughout this section 
%let $\pp=\langle A \mid R\rangle$ be a finite symmetrized
%presentation for a group $G$.  
First we extend
the notion of tameness to homotopies with other domains.

\begin{definition}
Let $f:\nn \ra \nn$ be a nondecreasing function, let
$Z$ be a space, and let
$\alpha:Z \times [0,1] \ra \dd$ be a continuous function
%from a space $Z$
onto a van Kampen diagram $\dd$ with basepoint $*$, satisfying 
$\alpha(p,0)=*$ for all $p \in Z$.  
The map $\alpha$
is called {\em intrinsically $f$-tame} if 
%\begin{description}
%\item[$(\dagger^i)$] 
for all $p \in Z$ and $0 \le s < t \le 1$,
we have 
\[ 
\td_\dd(*,\alpha(p,s)) \le f(\td_\dd(*,\alpha(p,t)))~.
\]
%\end{description}
Similarly, the map $\alpha$
is {\em extrinsically $f$-tame} if 
%\begin{description}
%\item[$(\dagger^e)$] 
for all $p \in Z$ and $0 \le s < t \le 1$,
we have 
\[ 
\td_X(\ep,\pi_\dd(\alpha(p,s))) \le f(\td_X(\ep,\pi_\dd(\alpha(p,t))))~.
\]
%\end{description}
\end{definition}

\begin{comment}
In order to require fewer van Kampen diagrams,
we need a set $\cc \subseteq A^*$ of
simple word normal forms for $G$.
Here a word $w$ 
is called  {\em simple} word if $w$ labels 
a simple path in the corresponding Cayley graph; that is,
the path does not repeat any vertices or edges.
Simple words are a key ingredient for gluing 
van Kampen diagrams together; given two 
planar diagrams with simple boundary subpaths
sharing a common label but
in reversed directions, the diagrams can be
glued along the subpaths to construct another
planar diagram.

For each $g \in G$, write $y_g$ for the element
of $\cc$ representing $g$.
A \edg\ is a van Kampen diagram for a word
of the form $y_gay_{ga}^{-1}$, where
$g \in G$ and $a$ in $A$;
this {\edg} is associated to the directed edge
of the Cayley graph $\xx$ labeled by $a$ with
initial vertex labeled by $g$.
A {\em \nff} for the pair $(G,\pp)$ 
consists of a set $\cc$ of simple word normal
forms for $G$, together with a collection
$\{\dd_e \mid e \in E(X)\}$ of {\edg}s, where
for each undirected edge $e$ in $X$, the \edg\ 
$\dd_e$ is associated to one of the two
directed edges whose underlying undirected
edge is $e$.
\end{comment}

Next we reroute the homotopy paths in
a diagram $\dd$, so that all paths travel
from the basepoint $*$ to a single edge of $\bo \dd$.
Suppose that $\dd$ is a van Kampen diagram for a 
word $w=w_1aw_2^{-1}$
such that each $w_i$ is a word over $A$ and $a \in A$.
An {\em \ehy} in $\dd$ of the directed
edge $e_a$ in $\bo \dd$, from vertex $v_1$ to
vertex $v_2$, corresponding to $a$ is
a continuous function $\tht:e_a \times [0,1] \ra \dd$
satisfying
\begin{description}
\item[(e1)] whenever $p$ is a point in $e_a$, then $\tht(p,0)=*$ and $\tht(p,1)=p$,
\item[(e2)] for $i=1,2$ the path 
$\tht(v_i,\cdot):[0,1] \ra \dd$ follows the
path labeled $w_i$ in $\bo \dd$.
\end{description}
See Figure~\ref{fig:edgefilling}.
\begin{figure}
\begin{center}
\includegraphics[width=2.2in,height=1.1in]{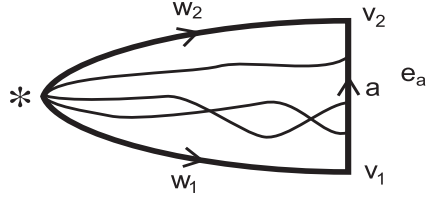}
\caption{Edge homotopy}\label{fig:edgefilling}
\end{center}
\end{figure}

A {\em \cnf} for a group $G$ with symmetrized 
presentation $\pp=\langle A \mid R \rangle$
consists of a collection $\cc \subseteq A^*$ of
simple word normal forms for $G$ together with a collection
$\cle=\{(\dd_e,\tht_e) \mid e \in E(X) \}$,
where $E(X)$ is the set of edges in the Cayley
complex $X$, satisfying the following:
\begin{description}
\item[(n1)] For each edge $e$, there is a choice
of direction for $e$ so that if the initial vertex
is $g$, the terminal vertex is $h$, the label on $e$
in the Cayley graph  
is $a$, and $y_g$,$y_{h}$ are the representatives of $g$,$h$
in $\cc$, then
%from initial vertex $g$ to terminal
%vertex $h$ such that
$\dd_e$ is a van Kampen diagram for the word
${w_e}:=y_gay_{h}^{-1}$.
%, where and the directed edge
%$e$ is labeled by $a$ in the Cayley graph.
\item[(n2)] For each $e$, the
map $\tht_e:\he \times [0,1] \ra \dd_e$
is an \ehy\ of the directed edge
$\he$, from vertex $\hat g_e$ to vertex $\hat h_e$,
in $\bo \dd_e$ that corresponds to $a$ 
in the factorization of the boundary word ${w_e}$.  
\item[(n3)] For every pair of edges $e,e' \in E(X)$
with a common endpoint $g$, we require that  
$\pi_{\dd_e} \circ \tht_e(\hat g_e,t)=
\pi_{\dd_{e'}} \circ \tht_{e'}(\hat g_{e'},t)$ for all 
$t$ in $[0,1]$; that is, $\tht_e$ and $\tht_{e'}$
project to the same path, with the same
parametrization, in the Cayley complex $X$.
\end{description}

%Note that since $y_g$ is a simple word,
%the map $\pi_{\dd_e}$ restricts to a labeled graph
%isomorphism on the boundary edge path $y_g$
%of $\dd_e$, and so
%the property in the last sentence of this
%definition of \cnf\ 
%implies that $\tht_u$ and
%$\tht_{e'}$ are the same map 

Note that, as in the case of fillings, every
\cnf\ induces a \cfl, again using the ``seashell''
method, as follows.  Given a \cnf\ 
$(\cc,\cle)$ and
a word $w=a_1 \cdots a_n$ with $w=_G \ep$ and
each $a_i \in A$, let $(\dd_i,\tht_i)$ be the
element of $\cle$ corresponding to the edge of $X$
with endpoints $a_1 \cdots a_{i-1}$ and
$a_1 \cdots a_i$.  If necessary replacing $\dd_i$ by
its mirror image and altering $\tht_i$ accordingly,
we may assume that $\dd_i$ has boundary label
$y_{i-1} a_i y_i$, where $y_i$ is the normal
form in $\cc$ of $a_1 \cdots a_i$.  As usual, let $\dd_w$
be the van Kampen diagram for $w$ obtained by successively
gluing these diagrams along their $y_i$ boundary
subpaths.  
This procedure yields a quotient map 
$\alpha:\coprod \dd_i \ra \dd_w$, such that each
restriction $\alpha|:\dd_i \ra \dd_w$ is an embedding.
%Hence we may consider each $\dd_i$
%as a subset of $\dd_w$. 
%Let $\he_i$ be the edge of $\dd_i$ corresponding
%to the letter $a_i$ in the boundary path; then
%$\he_i$ also lies in $\bo \dd_w$.
Let $\he_i$ be the edge in the boundary path
of $\dd_i$ (and by slight abuse of notation also in the
boundary of $\dd_w$) corresponding to the letter $a_i$.
%Note that we have allowed the
%possibility that some of these edges
%may not lie on the boundary of a 2-cell in $\dd_w$;
%some of the words $x_i$ may freely
%reduce to the empty word, and the corresponding
%van Kampen diagrams $\dd_i$ may have no 2-cells.
In order to build a van Kampen homotopy on
$\dd_w$, we note that the \ehs\ 
$\tht_i$ give a continuous function 
$\alpha \circ \coprod \tht_i: 
\coprod \he_i \times [0,1] \ra \dd_w$.
Recall that 
property (n3) of the definition of \cnf\ says that
%$\pi_{\dd_i} \circ \tht_i(v_i'',t)=
%\pi_{\dd_{i+1}} \circ \tht_{i+1}(v_{i+1}',t)$
%for all $t \in [0,1]$; that is,
on the common endpoint 
%$v_i:=\alpha(v_i'')=\alpha(v_{i+1}')$ 
$v_i$ of the edges $\he_i$
and $\he_{i+1}$ of $\dd_w$, the paths
$\pi_{\dd_i} \circ \tht_i(v_i,\cdot)$ and
$\pi_{\dd_{i+1}} \circ \tht_{i+1}(v_i,\cdot)$
follow the edge path in $X$ labeled $y_i$ with the same parametrization.
Hence the same is true for the functions
$\tht_i(v_i,\cdot)$ and $\tht_{i+1}(v_{i+1},\cdot)$
following the edge paths labeled $y_i$ that were glued by
$\alpha$.
Moreover, if an
$\he_i$ edge and (the reverse of) an $\he_j$ edge are glued
via $\alpha$, the maps $\tht_i$ and $\tht_j$ have been
chosen to be consistent.
Hence the collection of maps $\tht_i$
are consistent on points identified by the
gluing map $\alpha$,
and we obtain
an induced function
$\ps_w:\bo \dd_w \times [0,1] \ra \dd_w$.
The \ehy\  conditions of the $\tht_i$ maps
imply that the function $\ps_w$ satisfies all
of the properties needed for the required van Kampen
homotopy on the diagram $\dd_w$.

\begin{definition}\label{def:nfi}
A group $G$ with finite presentation $\langle A \mid R \rangle$ 
satisfies an {\em intrinsic} [resp.~{\em extrinsic}]
{\em \cni} for a nondecreasing function
$f:\nn \ra \nn$ if there is a \cnf\ given by $\cc \subseteq A^*$,
and $\cle=\{(\dd_u,\tht_u) \mid u \in E(X)\}$ such that
each \ehy\ $\tht_{u}$ is intrinsically 
[resp.~extrinsically] $f$-tame.
\end{definition}

A \cnf\ is 
{\em geodesic} if all of the words in the
normal form set $\cc$ 
label geodesics in the associated Cayley graph.
We call a \cni\ {\em geodesic} if the associated
\cnf\ is geodesic.
A set of geodesic normal forms that we will 
use several times in this paper is the set of
{\em shortlex} normal forms.  
Choose a (lexicographic) total ordering on the
finite set $A$.   For any two words $z,z'$ over
$A$, we write $z \slex z'$ if $z$ is less than 
$z'$ in the
corresponding shortlex ordering on $A^*$.
%, and
%for each element $g \in G$, let $z_g$ denote the
%shortlex least word over $A$ that represents $g$
%(i.e., the shortlex normal form for $g$).
%The set $\{y_g \mid g \in G\}$ is the set of 
% shortlex normal forms.

Next we turn to relaxing the boundary condition.
Let $S^1$ denote the unit circle in the
${\mathbb R}^2$ plane.
For any natural number $n$, let 
$C_n$ be $S^1$
with a 1-complex structure consisting of
$n$ vertices (one of which is the basepoint
$(-1,0)$) and $n$ edges.

Given any van Kampen diagram $\dd$ over $\pp$ for a word
$w$ of length $n$, let $\vt_\dd:C_n \ra \bo \dd$
be the function that maps $(-1,0)$ to $*$
and, going counterclockwise once around $C_n$,
maps each subsequent edge of $C_n$ homeomorphically onto
the next edge in the counterclockwise path labeled $w$
along the boundary of $\dd$.

A {\em disk homotopy} of a van Kampen diagram $\dd$
over $\pp$ for a word $w$ of length $n$ is a continuous 
function $\ph:C_n \times [0,1] \ra \dd$ satisfying:
\begin{description}
\item[(d1)] whenever $p \in C_n$, then $\ph(p,0)=*$ and
$\ph(p,1)=\vt_\dd(p)$,
\item[(d2)] whenever $t \in [0,1]$, then $\ph((-1,0),t)=*$, 
and
\item[(d3)] whenever $p \in C_n^0$, 
then $\ph(p,t) \in \dd^1$ for all $t \in [0,1]$.
\end{description}

\begin{definition}\label{def:rtfi}
A group $G$ with finite presentation $\pp$ satisfies an
{\em intrinsic} [respectively, {\em extrinsic}]
{\em relaxed \tfi} for a nondecreasing function 
$f:\nn \ra \nn$ if
for all $w \in A^*$ with $w=_G \ep$,
there exists a van Kampen diagram $\dd$ for $w$ over $\pp$
and an intrinsically [resp.~extrinsically] $f$-tame 
disk homotopy $\ph:C_{l(w)} \times [0,1] \ra \dd$.
\end{definition}

Now we are ready to show that in the intrinsic 
case, these concepts are effectively equivalent.

\begin{proposition}\label{prop:htpydomain}
Let $G$ be a group with a 
finite symmetrized presentation $\pp$,
and let $f:\nn \ra \nn$ be a nondecreasing function.
The following are equivalent, up to Lipschitz equivalence
of the function $f$:
\begin{enumerate}
\item $(G,\pp)$ satisfies an intrinsic 
\tfi\ with respect to $f$.
\item $(G,\pp)$ satisfies an intrinsic 
relaxed \tfi\  with respect to $f$.
\item $(G,\pp)$ satisfies an intrinsic geodesic \cni\  
with respect to $f$.
\end{enumerate}
\end{proposition}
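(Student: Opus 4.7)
The plan is to establish the cycle (3) $\Rightarrow$ (1) $\Rightarrow$ (2) $\Rightarrow$ (3), with each step allowed to replace $f$ by a Lipschitz-equivalent function. The implication (1) $\Rightarrow$ (2) is immediate: given a van Kampen diagram $\dd$ with an intrinsically $f$-tame van Kampen homotopy $\ps$, set $\ph := \ps \circ (\vt_\dd \times \mathrm{id}_{[0,1]})$. Conditions (d1)--(d3) follow from the corresponding properties of $\ps$, and the tameness inequality transfers verbatim because $\vt_\dd$ lands in $\bo \dd$, so the coarse-distance computation is unchanged.

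For (3) $\Rightarrow$ (1), I would apply the seashell gluing construction described before Definition~\ref{def:nfi}. Given a geodesic \cnf\ $(\cc,\cle)$ with $f$-tame edge homotopies $\{\tth_e\}$ and a word $w = a_1 \cdots a_n$ representing the identity, iteratively glue the edge diagrams $\dd_{e_i}$ along their common boundary subpaths labeled by the shortlex prefixes $y_{a_1 \cdots a_i}$, after possibly reflecting each $\dd_{e_i}$. Condition (n3) makes the edge homotopies $\tth_{e_i}$ glue continuously into a van Kampen homotopy $\ps_w$ on $\dd_w$. The crucial point for the intrinsic case is that the geodesic hypothesis forces every vertex on a shared spine path to have coarse distance in $\dd_w$ equal to its Cayley-graph coarse distance, which agrees with its coarse distance inside the individual $\dd_{e_i}$; by decomposing any shortest path in $\dd_w$ from $*$ to a point in the image of $\tth_{e_i}$ as a path to a spine vertex followed by a path inside $\dd_{e_i}$, one obtains a controlled comparison between $\tilde d_{\dd_w}$ and $\tilde d_{\dd_{e_i}}$ on the image of $\tth_{e_i}$, and the tameness of each $\tth_{e_i}$ then transfers to $\ps_w$ after a Lipschitz adjustment of $f$.

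The principal work lies in (2) $\Rightarrow$ (3). For each directed edge $\ega$ of the Cayley complex, fix shortlex normal forms $y_g, y_{ga}$ and apply (2) to the word $w_{\ega} := y_g a y_{ga}^{-1}$, obtaining a van Kampen diagram $\dd_{\ega}$ and an intrinsically $f$-tame disk homotopy $\ph_{\ega} : C_{l(w_{\ega})} \times [0,1] \to \dd_{\ega}$. The restriction of $\ph_{\ega}$ to the edge $\hhe$ of $C_{l(w_{\ega})}$ corresponding to the letter $a$ yields a continuous map on $\hhe \times [0,1]$ into $\dd_{\ega}$ satisfying (e1) and still $f$-tame, but whose endpoint curves $\ph_{\ega}(\hat v_j, \cdot)$, $j=1,2$, need not trace the boundary paths labeled $y_g$ and $y_{ga}$ as required by (e2). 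To repair this, I would modify the restricted map in a thin collar of the two endpoints of $\hhe$: at each endpoint, replace the curve by a uniform-speed traversal of the appropriate geodesic boundary subpath, and interpolate across the collar by splicing in the old disk-homotopy values on a short arc of $\hhe$. Because a geodesic boundary subpath has trivial (linear) tameness, this splice preserves $f$-tameness up to a Lipschitz adjustment. The global compatibility condition (n3) is arranged by requiring the projection $\pi_{\dd_{\ega}} \circ \tth_{\ega}$ at each boundary vertex $v$ of $\hhe$ to be the canonical constant-speed traversal of the shortlex normal form for $\pi_{\dd_{\ega}}(v)$ in $X$, which depends only on the vertex and not on the edge.

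The hard part is the collar interpolation in (2) $\Rightarrow$ (3): the splice must simultaneously enforce (e2), remain continuous across the square $\hhe \times [0,1]$, and keep the tameness bound within a Lipschitz factor of $f$. A secondary technical point is the distance comparison in (3) $\Rightarrow$ (1) needed to push tameness of individual edge homotopies through the seashell gluing in the intrinsic setting, where interior distances in $\dd_w$ can be strictly shorter than in each $\dd_{e_i}$; the geodesic hypothesis on $\cc$ is what keeps this loss controlled. The remaining steps are routine bookkeeping.
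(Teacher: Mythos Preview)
Your cycle and the steps (1) $\Rightarrow$ (2) and (3) $\Rightarrow$ (1) match the paper. For (3) $\Rightarrow$ (1) the paper actually gets exact preservation of intrinsic coarse distance under the seashell map (not just Lipschitz control): any shortest path in $\dd_w$ from $*$ to a point of $\alpha(\dd_i)$ first meets $\alpha(\dd_i)$ on a spine path labeled by a geodesic normal form, and rerouting along that spine gives a path of equal or shorter length lying entirely in $\alpha(\dd_i)$. So no adjustment of $f$ is needed there.

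The gap is in your (2) $\Rightarrow$ (3). Restricting $\ph_{w_e}$ to the single edge $\hhe$ of $C_{l(w_e)}$ leaves endpoint curves $\ph_{w_e}(\hat v_j,\cdot)$ that are arbitrary $f$-tame paths in $\dd_e^1$, and your ``collar interpolation'' is not a construction: you would need a continuous deformation, inside the 2-complex $\dd_e$, from each such path to the constant-speed boundary traversal of $y_g$ (resp.\ $y_{ga}$), with controlled tameness throughout. ``Splicing in disk-homotopy values on a short arc of $\hhe$'' cannot produce this, since those values converge to $\ph_{w_e}(\hat v_j,\cdot)$ as you approach $\hat v_j$, not to the geodesic traversal. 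There is no ambient linear interpolation available in $\dd_e$.

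The paper sidesteps the interpolation problem entirely. Rather than restrict to $\hhe$, it wraps $\he$ once around the \emph{whole} circle $C_{l(w_e)}$ via a map $\gamma$ sending both endpoints of $\he$ to the basepoint $(-1,0)$. On $[0,\tfrac12]$ one sets $\tht_e(p,t)=\ph_{w_e}(\gamma(p),2t)$; since $\gamma(\hat g)=\gamma(\hat h)=(-1,0)$ and $\ph_{w_e}((-1,0),\cdot)\equiv *$ by (d2), the endpoint curves are constant at $*$ on this half. On $[\tfrac12,1]$ one walks along $\bo\dd_e$ from $\vt_{\dd_e}(\gamma(p))$ to $p$; at the endpoints this is exactly the constant-speed traversal of $y_g$ or $y_h$, so (e2) and (n3) come for free. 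Tameness on the second half uses only that $y_g,y_h$ are geodesics in $\dd_e$ (plus at most one extra edge in $\he$), yielding $g(n)=f(n+1)$. The device is to manufacture the correct endpoint behavior by the choice of domain map $\gamma$, not to repair a wrong restriction afterward.
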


\begin{proof}
Write the presentation for $G$ as $\pp=\langle A \mid R \rangle$.

\noindent $(1) \Rightarrow(2)$: 

Given a van Kampen diagram
$\dd$ for a word $w$ 
and a van Kampen homotopy $\ps:\bo \dd \times [0,1] \ra \dd$,
the composition $\ph=\ps \circ (\vt_\dd \times id_{[0,1]}) :
C_n \times [0,1] \ra \dd$ is a disk homotopy for this diagram.
The fact that the identity function is used on the
[0,1] factor implies the result on the inequalities.

\noindent $(2) \Rightarrow(3)$:  

Suppose that
$\cld=\{(\dd_w,\ph_w) \mid w \in A^*, w=_G \ep\}$
is a collection of van Kampen diagrams and disk homotopies
such that 
each $\ph_w$ is intrinsically $f$-tame.

Let $\cc:=\{y_g \mid g \in G\}$ be the set of 
shortlex normal forms with respect to some lexicographic
ordering of $A$.

For any edge $e \in E(X)$, we orient the edge
$e$ from vertex $g$ to $h=ga$ if $y_g \slex y_{ga}$. 
There is a pair $(\dd_{w_e},\ph_{w_e})$ in $\cld$
associated to the word $w_e:=y_gay_{h}^{-1}$.
Define $\dd_e:=\dd_{w_e}$, and
let $\he$ be the edge in 
the boundary path of $\dd_{w_e}$ corresponding to the
letter $a$ in the concatenated word $w_e$.

We construct an \ehy\  
$\tht_{e}: \he \times [0,1] \ra \dd_e$ as follows.
Associated with the map $\ph_{w_e}$ we have
a canonical map $\vt_{\dd_e}:C_{l({w_e})} \ra \bo \dd_e$,
given by $\vt_{\dd_e}(q)=\ph_{w_e}(q,1)$, using disk homotopy
condition (d1).
Recall that this map wraps the simple edge circuit $C_{l({w_e})}$ 
cellularly along the edge path of $\bo \dd_e$.
Let $\gamma: \he \ra C_{l({w_e})}$ be a continuous map that
wraps the edge $\he$ once (at constant speed)
in the counterclockwise direction
along the circle, with the endpoints of $\he$
mapped to $(-1,0)$.  
For each point $p$ in $\he$, and for all
$t \in [0,\frac{1}{2}]$, define 
$\tht_e(p,t):=\ph_{w_e}(\gamma(p),2t)$.
Then $\tht_e(p,\frac{1}{2})=\vt_{\dd_e}(\gamma(p)) \in \bo \dd_e$. 

Let $\hhe$ be the (directed) edge of $C_{l({w_e})}$ corresponding
to the edge $\he$ of the boundary path in $\bo \dd_e$, 
with endpoint $v_1$ of $\hhe$ occurring earlier than endpoint $v_2$
in the counterclockwise path from $(-1,0)$.
Also let $\hhr_1$, $\hhr_2$ be the arcs of $C_{l({w_e})}$ mapping
via $\vt_{\dd_e}$ to the paths labeled by the subwords 
$y_g$, $y_h^{-1}$,
respectively, of ${w_e}$ in $\bo \dd_e$.
For each point $p$ in the interior $Int(\he)$ of the edge $\he$,
there is a unique point $\hhp$ in $\hhe$ with
$\vt_{\dd_e}(\hhp)=p$.
There is an arc (possibly a single point) in $C_{l({w_e})}$
from $\gamma(p)$ to $\hhp$ that is disjoint from the point
$(-1,0)$; let $\delta_p:[\frac{1}{2},1] \ra C_{l({w_e})}$
be the constant speed path following this arc.
That is, $\vt_{\dd_e} \circ \delta_p$ is a path in
$\bo \dd_e$ from $\vt_{\dd_e}(\gamma(p))$ to $p$.
In particular, if $\gamma(p)$ lies in $\hhr_1$, then
the path $\vt_{\dd_e} \circ \delta_p$ follows the end portion
of the boundary path labeled by $y_g$ from 
$\vt_{\dd_e}(\gamma(p))$ to the endpoint $\vt_{\dd_e}(v_1)$ 
of $\he$
and then follows a portion of $\he$ to $p$.  If $\gamma(p)$
lies in $\hhr_2$, the path $\vt_{\dd_e} \circ \delta_p$
follows a portion of the boundary path $y_h$
and $\he$  clockwise
from  $\vt_{\dd_e} \circ \delta_p$ via $\vt_{\dd_e}(v_2)$ to $p$,
and if $\gamma(p)$ is in $\hhe$, then
the path $\vt_{\dd_e} \circ \delta_p$ remains in $\he$.
Finally, for each point $p$
that is an endpoint $p=\vt_{\dd_e}(v_i)$ (with $i=1,2$),
let $\delta_p:[\frac{1}{2},1] \ra C_{l({w_e})}$
be the constant speed path along the arc $\hhr_i$ in $C_{l({w_e})}$
from $(-1,0)$ to $v_i$.  
%(Note that if the edge $\he$
%is a loop with a single endpoint $\vt_{\dd_e}(v_1)=\vt(v_2)$,
%then we have $y_g=y_h$, and although the path
%$\delta_p$ is not well-defined, the path 
%$\vt_{\dd_e} \circ \delta_p$ is well-defined.)
Now for all $p$ in $\he$ and $t \in [\frac{1}{2},1]$,
define
$\tht_e(p,t):=\vt_{\dd_e}(\delta_p(t))$.

Combining the last sentences of the previous
two paragraphs, we have constructed a continuous function
$\tht_e:\he \times [0,1] \ra \dd_e$.
See Figure~\ref{fig:vktoedgehtpy} for an illustration 
of this map.
\begin{figure}
\begin{center}
\includegraphics[width=4.4in,height=1.8in]{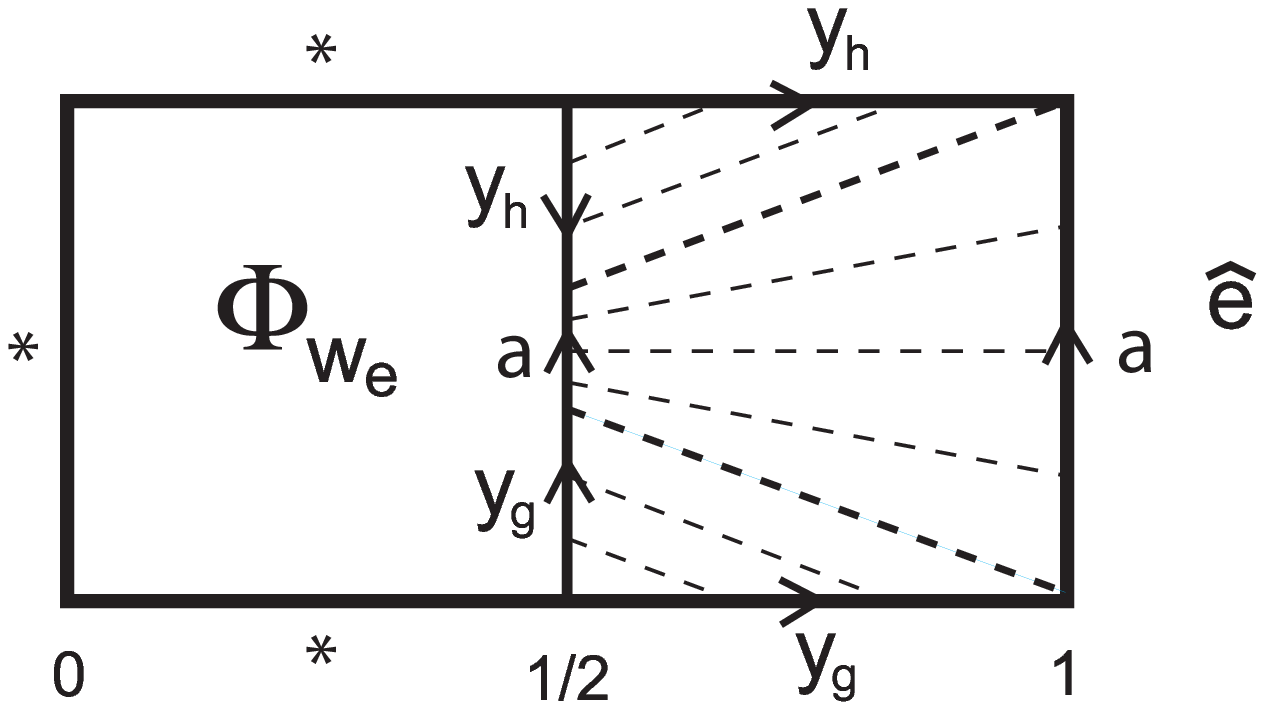}
\caption{Edge homotopy $\tht_e$ in Proposition~\ref{prop:htpydomain} 
proof (of (2) $\Rightarrow$ (3))}\label{fig:vktoedgehtpy}
\end{center}
\end{figure}
The disk homotopy conditions satisfied by $\ph_{w_e}$
imply that $\tht_e$ is an \ehy.  
%We define this
%\ehy\ $\tht=\tht_{(\dd,\ph,e)}$ to be
%the {\em canonical \ehy} associated to
%the tuple $(\dd,\ph,e)$.
Let $\cle$ be the collection
$\cle=\{(\dd_e,\tht_e) \mid 
e \in E(X) \}$.
Then $\cc$ together with $\cle$ define a geodesic \cnf\ of the
pair $(G,\pp)$.

Now we turn to analyzing the tameness of the
\ehy\  $\tht_e:\he \times [0,1] \ra \dd_e$.
Suppose that $p$ is any point in 
$\he$.
The fact
that the disk homotopy $\ph_{w_e}$ is intrinsically
$f$-tame 
implies that for all
$0 \le s<t \le \frac{1}{2}$, we have
$\td_{\dd_e}(*,\tht_e(p,s)) \le f(\td_{\dd_e}(*,\tht_e(p,t)))$.

The path
$\tht_e(p,\cdot)=\vt_{\dd_e}(\delta_p(\cdot)):[\frac{1}{2},1] \ra \dd_e$ 
on the second half of the interval $[0,1]$ follows a
geodesic in $\dd_e$, with the possible exception
of the end portion of this path that
lies completely contained in the edge $\he$.
Hence for all $\frac{1}{2} \le s<t \le 1$, we have
$\td_{\dd_e}(*,\tht_e(p,s)) \le \td_{\dd_e}(*,\tht_e(p,t))+1$.

Finally, whenever $0 \le s<\frac{1}{2}<t \le 1$,
we have
$\td_{\dd_e}(*,\tht_e(p,s)) \le f(\td_{\dd_e}(*,\tht_e(p,\frac{1}{2})))
\le f(\td_{\dd_e}(*,\tht_e(p,t))+1)$,
where the latter inequality uses the nondecreasing property
of $f$.

Putting these three cases together, the \ehy\ 
 $\tht_e$ is intrinsically $g$-tame
with respect to the nondecreasing function
$g:\nn \ra \nn$ given by $g(n)=f(n+1)$ for all $n \in \nn$,
and this function is Lipschitz equivalent to $f$.

\noindent $(3) \Rightarrow(1)$:  

Suppose that the
set $\cc=\{y_g \mid g \in G\}$ of geodesic normal
forms for $G$ together with the collection
$\cle=\{(\dd_e,\tht_e) \mid e \in E(X)\}$
is a \cnf\ with each $\tht_e$ intrinsically $f$-tame.
Let $\cld=\{(\dd_w',\ps_w)\}$ be the induced \cfl\ 
from the seashell method.

Fix a word $w \in A^*$ with $w=_G \ep$.
Recall that there is a quotient map
$\alpha:\coprod \dd_i \ra \dd_w'$ from
the seashell construction, where each $\dd_i$ 
is a \edg\ from $\cle$, and the restriction
of $\alpha$ to each $\dd_i$ is an embedding.   
In particular, for each $i$,  
paths labeled $y_i \in \cc$ in $\bo \dd_{i}$
and $\bo \dd_{i+1}$ are glued via $\alpha$.
Moreover, the
homotopy $\ps_w$ is induced by the map 
$\alpha \circ \coprod \tht_i:\coprod \he_i \times [0,1] \ra \dd_w'$. 

In order to analyze coarse distances in
the van Kampen diagram $\dd_w'$, 
we begin by supposing that $p$ is any vertex in $\dd_w'$.  
Then $p=\alpha(q)$ for some
vertex $q \in \dd_i$ (for some $i$).  The identification
map $\alpha$ cannot increase distances to the 
basepoint, so we have $d_{\dd_w'}(*,p) \le d_{\dd_i}(*,q)$.
Suppose that $\beta:[0,1] \ra \dd_w'$ is an edge path 
in $\dd_w'$ from $\beta(0)=*$
to $\beta(1)=p$ of length strictly less than $d_{\dd_i}(*,q)$.
This path cannot stay in the (closed)
subcomplex $\alpha(\dd_i)$ of $\dd'_w$,
and so there is a minimum time $0 < s \le 1$ such
that $\beta(t) \in \alpha(\dd_i)$ for all $t \in [s,1]$.
Then the point $\beta(s)$ must lie on the image
of the boundary of $\dd_i$ in $\dd_w'$.  Since the
words $y_{i-1}$ and $y_i$ label geodesics in the
Cayley complex of the presentation $\pp$, 
these words must also label geodesics in
$\dd_i$ and $\dd_w'$.  Hence we can replace the
portion of the path $\beta$ on the interval $[0,s]$
with the geodesic path along one of these words
from $*$ to $\beta(s)$, to obtain a new edge path
in $\alpha(\dd_i)$ from $*$ to $p$ of length 
strictly less than $d_{\dd_i}(*,q)$.  Since
$\alpha$ embeds $\dd_i$ in $\dd_w'$, this
results in a contradiction.  
Thus for each vertex $p$ in $\dd_w'$,
we have $d_{\dd_w'}(*,p)=d_{\dd_i}(*,q)$.

For any point in the interior of an
edge or 2-cell in $\dd_w'$, the coarse
distance in $\dd_w'$ to the basepoint is computed
from the path metric  distances of the vertices
in the boundary of the cell.  Hence the result
of the previous paragraph shows that for any
point $p$ in $\dd_w'$ with $p=\alpha(q)$ for
some point $q \in \dd_i$, we have
$\td_{\dd_w'}(*,p)=\td_{\dd_i}(*,q)$.  That is,
the map $\alpha$ preserves coarse distance.

Finally, for each $p \in \bo \dd_w'$, 
we have $p=\alpha(q)$ for some point 
$q$ in $\he_i \subseteq \dd_i$, and so 
$\ps_w(p,t)=\alpha(\tht_i(q,t))$ for all
$t \in [0,1]$.  Since each $\tht_i$ is an
intrinsically $f$-tame map, the
homotopy $\ps_w$ is also $f$-tame.
\end{proof}

In the extrinsic setting, a \cnf\ gives rise to
another type of homotopy, namely the notion of
a 1-combing, first defined by Mihalik and Tschantz in 
\cite{mihaliktschantz}.
A {\em 1-combing} of the Cayley complex
$X$ is a continuous function
$\up:\xx \times [0,1] \ra X$ satisfying that 
whenever $p \in \xx$, then $\up(p,0)=\ep$ and 
$\up(p,1)=p$, and
whenever $p \in X^0$, 
then $\up(p,t) \in \xx$ for all $t \in [0,1]$.
That is, a 1-combing is a continuous choice
of paths in the Cayley 2-complex $X$ from the
vertex $\ep$ labeled by the identity of $G$ 
to each point of the Cayley graph $\xx$, such
that the paths to vertices are required
to stay inside the 1-skeleton.

For a \cnf\ given 
by a set $\cc$ of normal forms together with a collection 
$\cle=\{(\dd_u,\tht_u) \mid u \in E(X)\}$
of van Kampen diagrams and \ehs,
there is a canonical associated 1-combing $\up$ of the
Cayley complex $X$ defined as follows.  For 
any point $p$ in the Cayley graph $\xx$, let $u$ be
an edge in $X$ containing $p$.  Then for any
$t \in [0,1]$, define $\up(p,t):=\pi_{\dd_u}(\tht_u(p,t))$.
The consistency condition (n3) of the definition of
a \cnf\ ensures that $\up$ is well-defined.

In fact, this associated 1-combing 
satisfies more restrictions than those of
Mihalik and Tschantz, in that the 1-combing factors through
\ehs\  of {\edg}s.  
We refer to these extra properties as \dia.
That is, a {\em \dia\  1-combing} of $X$ is a 1-combing
$\up:\xx \times [0,1] \ra X$ that also satisfies:
\begin{description}
\item[(c1)] whenever $t \in [0,1]$, then $\up(\ep,t)=\ep$.
\item[(c2)] whenever $v$ is a vertex in $X$,
the path $\up(v,\cdot)$ follows an embedded edge path
(i.e., no repeated vertices or edges)  
from $\ep$ to $v$ labeled by word $w_v$, and
\item[(c3)] whenever $e$ is a directed edge from 
vertex $u$ to vertex $v$ in $X$ labeled by $a$, then
there is a van Kampen diagram $\dd$ with respect to
$\pp$ for the word $w_uaw_v^{-1}$, 
together with
an \ehy\  $\tht:\hat e \times [0,1] \ra \dd$ 
associated to the edge $\hat e$ of $\bo \dd$ corresponding
to the letter $a$ in this boundary word,
such that 
\\
$\up \circ (\pi_\dd \times id_{[0,1]})|_{\hat e \times [0,1]}
=\pi_{\dd} \circ \tht$.
\end{description}

Mihalik and Tschantz~\cite{mihaliktschantz} defined a notion of tameness
of a 1-combing, which Hermiller and Meier~\cite{hmeiermeastame} 
refined to the idea of the 1-combing homotopy being $f$-tame
with respect to a function $f$, which they call a 
``radial tameness function''.
(In \cite{hmeiermeastame}, coarse distance in $X$
is described in terms of ``levels'', and the 
definition of coarse distance for a 2-cell 
is defined slightly differently from that of Definition~\ref{def:td}.)

\begin{definition}\label{def:radial} \cite{hmeiermeastame}
A group $G$ with finite presentation $\pp$ 
satisfies a {\em radial \tci}
for a nondecreasing function $\rho:\nn \ra \nn$ if
there is a \dia\ 1-combing $\up$ of the associated
Cayley 2-complex $X$ such that
\begin{description}
\item[$(\dagger^r)$] 
for all $p \in \xx$ and $0 \le s < t \le 1$,
we have 
\[ \td_X(\ep,\up(p,s)) \le \rho(\td_X(\ep,\up(p,t)))~.
\]
\end{description}
\end{definition}

%\coment{Mark and I believe that tame combings can always 
%be converted to diagrammatic ones.  We started
%a proof using barycentric subdivision/simplicial
%approximation.  Is it worth it to try to finish this
%proof and put it in?  Is there a reference we can
%make to the literature that shows that every 
%map $D^2 \ra X$ can be quasigeodesically deformed
%to a map $\dd \ra X$?}

%We will refer to a 1-combing of the Cayley complex 
%associated to $\pp$ that satisfies the
%condition $(\dagger^r)$ with respect to a function $\rho$
%as a {\em $\rho$-tame 1-combing} of $(G,\pp)$.
%A slight alteration of the proof of 
%\cite[Theorem A]{hmeiermeastame}, to take into account
%the altered definition of coarse distance, shows that
%a radial \tci\  is a quasi-isometry invariant
%up to Lipschitz equivalence of functions.
%That is, if a group $G$ with finite presentation $\pp$
%is quasi-isometric to a group $H$ with finite presentation
%$\pp'$, and the Cayley complex $X$ for $(G,\pp)$
%admits a 
%$\rho$-tame 1-combing,
%%radial \tci\  with respect to a function $\rho:\nn \ra \nn$, 
%then the Cayley complex $Y$
%for $(H,\pp')$ admits a 
%$\rho'$-tame 1-combing
%%radial \tci\ 
%for a Lipschitz equivalent function $\rho'$.

Note that the radial \tci\  property is 
fundamentally
an extrinsic property,
utilizing (coarse) distances measured in the Cayley
complex.  Effectively, Proposition~\ref{prop:etistc}
below shows that 
the logical intrinsic analog of a radial \tci\ 
is the concept of an intrinsic \tfi\ or
intrinsic \cni.

In Proposition~\ref{prop:etistc}, we 
show that in the extrinsic setting, 
a stronger set of equivalences hold.
In particular, not only
are the intrinsic properties discussed in the
Proposition~\ref{prop:htpydomain} also
equivalent in the extrinsic case, they are also equivalent to
a radial \tci, and to an extrinsic \cni\ 
without the geodesic restriction.

\begin{proposition}\label{prop:etistc}
Let $G$ be a group with 
a finite symmetrized presentation $\pp$,
and let $f:\nn \ra \nn$ be a nondecreasing function.
The following are equivalent, up to Lipschitz equivalence
of the function $f$:
\begin{enumerate}
\item $(G,\pp)$ satisfies an extrinsic 
\tfi\ with respect to $f$.
\item $(G,\pp)$ satisfies an extrinsic 
relaxed \tfi\  with respect to $f$.
\item $(G,\pp)$ satisfies an extrinsic geodesic \cni\ 
with respect to $f$.
\item $(G,\pp)$ satisfies an extrinsic \cni\ 
with respect to $f$.
\item $(G,\pp)$ satisfies a radial \tci\ 
with respect to $f$.
\end{enumerate}
\end{proposition}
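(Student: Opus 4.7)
The plan is to run the same cycle of implications as in Proposition~\ref{prop:htpydomain}, adapted from intrinsic to extrinsic coarse distance, and then graft on two additional equivalences with the radial \tci\ of Definition~\ref{def:radial}. Concretely, I would establish $(1) \Rightarrow (2) \Rightarrow (3) \Rightarrow (4) \Rightarrow (5) \Rightarrow (4) \Rightarrow (1)$, where $(4) \Leftrightarrow (5)$ is proved separately and $(4) \Rightarrow (1)$ closes both chains. First, $(1) \Rightarrow (2)$ is identical to the intrinsic argument: composing $\ps$ with $\vt_\dd \times id_{[0,1]}$ preserves the homotopy values, hence preserves $\pi_\dd$-images, so the extrinsic tameness inequality is transferred verbatim. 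For $(2) \Rightarrow (3)$, I would repeat the construction from the proof of Proposition~\ref{prop:htpydomain}, choosing shortlex normal forms $\cc = \{y_g\}$ and building $\tht_e$ from the disk homotopy $\ph_{w_e}$ exactly as in Figure~\ref{fig:vktoedgehtpy}. The key point is that on $[\tfrac12,1]$ the path $\tht_e(p,\cdot) = \vt_{\dd_e} \circ \delta_p$ follows an arc in $\bo\dd_e$ whose $\pi_{\dd_e}$-image lies on a geodesic edge path in $X$ (because the labels $y_g, y_h$ are shortlex, hence geodesic in $\xx$); so $\td_X(\ep,\pi_{\dd_e}(\tht_e(p,s))) \le \td_X(\ep,\pi_{\dd_e}(\tht_e(p,t))) + 1$ for $\tfrac12 \le s < t \le 1$, and combining with the extrinsic $f$-tameness on $[0,\tfrac12]$ gives Lipschitz-equivalent extrinsic $f$-tameness of $\tht_e$ overall.

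Step $(3) \Rightarrow (4)$ is immediate. For $(4) \Rightarrow (1)$, I would apply the seashell method of Subsection~\ref{subsec:stackdef} to the {\cnf} $(\cc,\cle)$, producing a {\cfl} $\{(\dd_w',\ps_w)\}$ with quotient maps $\alpha:\coprod \dd_i \to \dd_w'$. The crucial observation, which is exactly the distinction flagged in Section~\ref{sec:relax} between the intrinsic and extrinsic cases, is that $\pi_{\dd_w'} \circ \alpha|_{\dd_i} = \pi_{\dd_i}$ for every $i$, so $\td_X(\ep,\pi_{\dd_w'}(\alpha(q))) = \td_X(\ep,\pi_{\dd_i}(q))$ for every point $q \in \dd_i$. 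Hence the extrinsic coarse distance is preserved by $\alpha$ without any geodesic hypothesis on $\cc$, and since $\ps_w(p,\cdot) = \alpha \circ \tht_i(q,\cdot)$ for suitable $i$ and $q \in \he_i$, the extrinsic $f$-tameness of the $\tht_i$ passes directly to $\ps_w$. This is precisely where the extrinsic case is strictly easier than the intrinsic one and why (4) needs no geodesic qualifier.

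For the two new equivalences $(4) \Leftrightarrow (5)$, I would use the canonical construction already exposited in Section~\ref{sec:relax}. Given a {\cnf} as in (4), define $\up:\xx \times [0,1] \to X$ by $\up(p,t) := \pi_{\dd_u}(\tht_u(p,t))$ for any edge $u$ containing $p$. Condition (n3) makes $\up$ well-defined, and conditions (c1)-(c3) for a \dia\ 1-combing follow from the {\cnf} data (the words $w_v$ are the normal forms in $\cc$ and the edge diagrams $\dd_u$ and homotopies $\tht_u$ supply (c3) directly). Moreover $\td_X(\ep,\up(p,t)) = \td_X(\ep,\pi_{\dd_u}(\tht_u(p,t)))$, so extrinsic $f$-tameness of each $\tht_u$ is exactly the radial inequality $(\dagger^r)$. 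Conversely, the \dia\ structure of a radial combing gives, for each edge $e$ of $X$, a van Kampen diagram $\dd$ and an edge homotopy $\tht$ satisfying (c3); declaring $(\dd_e,\tht_e) := (\dd,\tht)$ and taking $\cc$ to be the normal forms $\{w_v\}$ arising from (c2) (which are simple by assumption) yields a {\cnf}, and the consistency (n3) holds because both sides project to the same restriction of $\up$ at a common endpoint.

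The main obstacle is the bookkeeping in $(2) \Rightarrow (3)$: I must choose the edge homotopies $\tht_e$ so that (n3) holds, i.e., so that at any vertex $g$ shared by two edges, the projections to $X$ of the two edge homotopies agree with matching parametrization. As in the intrinsic proof, this is arranged by orienting each edge $e$ of $X$ according to $\slex$ and choosing $\delta_p$ canonically so that the $\pi$-image of $\tht_e(\hat g_e,\cdot)$ depends only on the vertex $g$, not on the edge $e$ incident to $g$; a careful reparametrization may be required so that the speed along the normal form path $y_g$ is determined by $g$ alone. A secondary nuisance is aggregating the various constant shifts (e.g.~the $+1$ from the tail of $\tht_e$, the $+1$ from passing between coarse and integer distances, and any shifts absorbed when comparing $\td_X$ to length-along-$\bo\dd$) into a single function Lipschitz equivalent to $f$; this is routine but needs to be kept explicit.
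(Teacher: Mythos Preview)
Your proposal is correct and follows essentially the same route as the paper's own proof: the paper also notes that $(1)\Rightarrow(2)\Rightarrow(3)$ are analogous to Proposition~\ref{prop:htpydomain}, that $(3)\Rightarrow(4)$ is immediate, that $(4)\Rightarrow(1)$ goes by the seashell argument with the key observation $\pi_{\dd_w'}\circ\alpha|_{\dd_i}=\pi_{\dd_i}$ (so no geodesic hypothesis is needed), and that $(4)\Leftrightarrow(5)$ follows from the canonical \dia\ 1-combing construction and its converse. Your flagged concern about arranging~(n3) and absorbing the constant shifts is exactly the residual bookkeeping the paper also leaves implicit.
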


\begin{proof}
We first note that the proofs of (1) $\Rightarrow$ (2) $\Rightarrow$ (3)
are analogous to the proofs of the same intrinsic properties
in Proposition~\ref{prop:htpydomain}.  The implication
(3) $\Rightarrow$ (4) is immediate.

The implication (4) $\Rightarrow$ (1) follows the seashell
method as in the proof of (3) $\Rightarrow$ (1) in
Proposition~\ref{prop:htpydomain}. 
In the extrinsic setting,
the seashell quotient map $\alpha$ preserves 
extrinsic distances (irrespective of whether or
not the normal forms are geodesics).
That is, for any point $p$ in $\dd_i$, 
we have $\pi_{\dd_i}(p)=\pi_{\dd_w'}(p)$, and 
hence $\td_X(\ep,\pi_{\dd_i}(p))=\td_X(\ep,\pi_{\dd_w'}(p))$.
The rest of the proof follows.

The implication (4) $\Rightarrow$ (5) utilizes the
canonical \dia\ 1-combing $\up$ associated to a \cnf\ discussed
above.  For $f$-tame \ehs\  $\tht_e$ in 
the \cnf, the definition 
$\up(p,t):=\pi_{\dd_e}(\tht_e(p,t))$, 
implies that the condition ($\dagger^r$) (with
respect to the same function $f$) also holds.

Finally, for the implication (5) $\Rightarrow$ (4),
given a \dia\ 1-combing $\up$, the definition of
\dia\ implies that there is a canonically associated
\cnf\ through which $\up$ factors, as well.
Again we have ($\dagger^r$) implies that each of
the \ehs\  is $f$-tame, with
respect to the same function $f$, as an immediate
consequence.
\end{proof}

We note  that each of the properties in 
Propositions~\ref{prop:htpydomain} and~\ref{prop:etistc}
must also have the same quasi-isometry invariance
as the respective \tfi, from Theorem~\ref{thm:itisqi}.

%Another consequence of the proof of Proposition~\ref{prop:etistc}
%is that the pair $(G,\pp)$ admits an extrinsic \cni\  
%for $f$ if and only if $(G,\pp)$ admits a Lipschitz
%equivalent bound on an extrinsic \cnf\ whose normal form set $\cc$ is
%the set of shortlex normal forms.
%Similarly, in any radial \tci, we can restrict
%the \dia\ 1-combing $\up$ so that the
%set of paths $\up:X^0 \times [0,1] \ra X$ to
%vertices in $X$ follow the shortlex normal forms.

%\coment{See ITF notes p. 101 for a remark on exactly how the
%IT,IRT,IGNF  related, and similarly for extrinsic case, in 
%the proofs above.  Should this be spelled out in the paper?}

%%%%%%%%%%%%%%%%%%%%%%%%%%%%%%%%%%%%%%%%%%%%%%%%%%%%%%%%%%%%%%%%%%%%%%%%%%%%
%%%%%%%%%%%%%%%%%%%%%%%%%%%%%%%%%%%%%%%%%%%%%%%%%%%%%%%%%%%%%%%%%%%%%%%%%%%%
%%%%%%%%%%%%%%%%%%%%%%%%%%%%%%%%%%%%%%%%%%%%%%%%%%%%%%%%%%%%%%%%%%%%%%%%%%%%

\section{Combed fillings for stackable groups}\label{sec:rnf}

%%%%%%%%%%%%%%%%%%%%%%%%%%%%%%%%%%%%%%%%%%%%%%%%%%%%%%%%%%%%%%%%%%%%%%%%%%%%
%%%%%%%%%%%%%%%%%%%%%%%%%%%%%%%%%%%%%%%%%%%%%%%%%%%%%%%%%%%%%%%%%%%%%%%%%%%%

In this section we give an inductive procedure for
constructing a \cnf\ for any \fstkbl\ group.
Recall that in Section~\ref{subsec:stackdef}, an inductive
procedure was described for building a \nff\ 
from a \fstkg; in this section we extend this
process to include \ehs.  
Because
\ehs\  are built in this recursive fashion,
we will have finer control on their tameness than
for a more general \cnf.  We will utilize this
extra restriction to prove in Theorem~\ref{thm:rnftame}
that every \fstkbl\ group admits finite-valued
intrinsic and extrinsic \tfs.

Let $G$ be a \fstkbl\ group with 
\fstkg\ $(\cc,c)$ over a
finite inverse-closed generating set $A$,
and let $\pp=\langle A \mid R_c \rangle$
be the
(symmetrized) \stkg\ presentation, with Cayley complex $X$.
%Recall that we also assume that
%for each $a \in A$, the element represented by $a$ in $G$ 
%is not the trivial element.
%As usual let $X$
%be the associated Cayley complex, and let $E(X)$ and
%$F(X)$ be the sets of edges and 2-cells of $X$,
%respectively.  
%Starting from a \stkg\  $(\cc,\ves,<,c)$, w
Let $\dgd$ be the set of degenerate edges in $X$, let $\ves$
be the set of recursive edges, and let
$<_c$ be the \stkg\  ordering.
We construct 
a \cnf\ for $G$ as follows.

The set $\cc$ will also be the set of normal forms
for the \cnf.  For each $g \in G$, let
$y_g$ denote the normal form for $g$ in $\cc$.

For each directed edge $e$ in $\vec E(X)=\dgd \cup \ves$, oriented
from a vertex $g$ to a vertex $h$ and labeled by $a \in A$,
let $w_e:=y_g a y_{h}^{-1}$.  
Let $\dd_e$ be the \edg\  (with
boundary word $w_e$) associated to $e$, obtained from
the \fstkg\ by using the construction in Section~\ref{subsec:stackdef}.
%We build an \ehy\ for this diagram.

In the case that $e$ lies in $\dgd$, the 
diagram $\dd_e$ contains no 2-cells.
Let $\he$ be the edge of $\bo \dd_e$
corresponding to $a$ in the factorization of $w_e$; see Figure~\ref{fig:eintree}.
\begin{figure}
\begin{center}
\includegraphics[width=2.3in,height=0.7in]{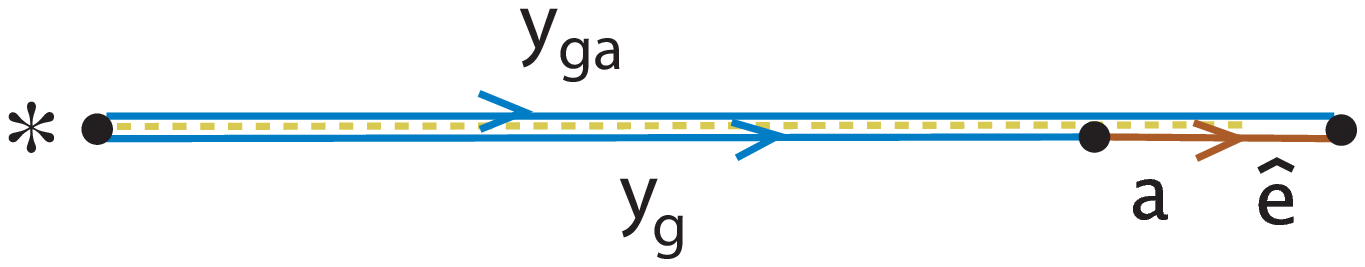}
\hspace{.2in}
\includegraphics[width=2.3in,height=0.7in]{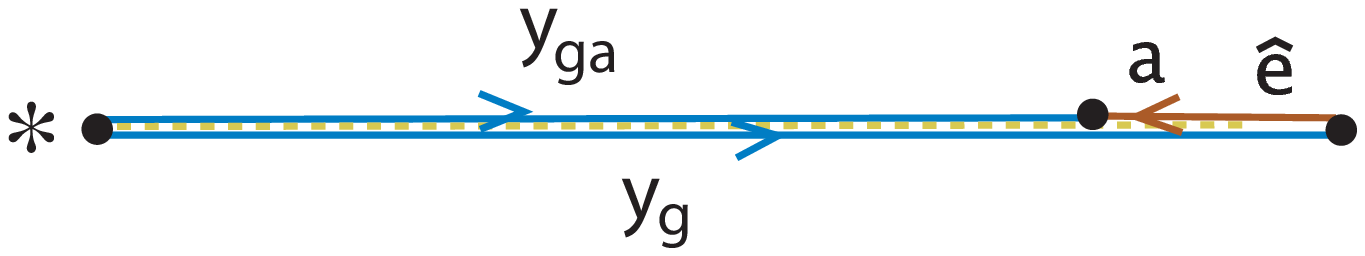}
\caption{$(\dd_e,\tht_e)$ for 
$e$ in $\dgd$}\label{fig:eintree}
\end{center}
\end{figure} 
Define the \ehy\  $\tht_e:\he \times [0,1] \ra \dd_e$
by taking $\tht_e(p,\cdot):[0,1] \ra \dd_e$ 
to follow the
shortest length (i.e. geodesic with respect to
the path metric) path from the basepoint $*$ to $p$
at a constant speed,
%, and taking $\tht_e(p,t):=p$ for all $t \in [\frac{1}{2},1]$,
for each point $p$ in $\he$.
%Thus we have van Kampen diagrams and \ehs\  
%associated to every directed
%edge in the set $\dgd$.

Next we use the recursive construction of the
van Kampen diagram $\dd_e$ to
recursively construct the 
\ehy\  in the case that 
$e \in \ves$.
%Let $x_e$ be the 
%freely reduced word labeling the boundary
%path in the 2-cell $c(e)$ from $g$ to $h$ 
%disjoint from the edge $e$, from part (S3)
%of the definition of \stkg.  
Recall that if we
write $c(e)=a_1 \cdots a_n$ with each $a_i \in A^*$,
then the \edg\ $\dd_e$ is constructed  
from {\edg}s $\dd_i$
with boundary labels 
$y_{ga_1 \cdots a_{i-1}} a_i y_{ga_1 \cdots a_{i}}$,
obtained by induction or from degenerate edges.
These diagrams are glued along their common boundary paths
$y_i:=y_{ga_1 \cdots a_{i}}$ 
(to obtain the ``seashell'' diagram $\dd_e'$), 
and then a
single 2-cell with boundary label 
$c(e)a^{-1}$ is glued onto $\dd_e'$
along the $c(e)$ subpath of $\bo \dd_e'$,
to produce $\dd_e$.

A slightly alternative view of this construction 
%(from Section~\ref{sec:stackdef}) 
of $\dd_e$ will allow us more flexibility in
constructing the \ehy\ associated to this diagram, which 
in turn will lead to better tameness bounds later.
Factor $c(e)=x_g^{-1} \tc_e x_h$ such that the directed edges 
in the paths in $X$ labeled
by $x_g^{-1}$ starting at $g$,
and labeled by $x_h^{-1}$ starting at $h$, all
lie in $\dgd$, and such that $y_g=y_q x_g$ and $y_h=y_r x_h$
where $q:=_G gx_g^{-1}$ and $r:=_G hx_h^{-1}$.
There are indices $j,k$ such that
$\tc_e:=a_j \cdots a_k$. 
%Write $\tx_e=a_1 \cdots a_n$ with each $a_i \in A$.
If the word $\tc_e$ is
nonempty, then $\dd_e$ can also be constructed by a seashell
gluing of the {\edg}s $\dd_j, \cdots, \dd_k$ 
to produce
a diagram $\dd_e''$ with boundary labeled $y_q \tc_e y_r^{-1}$, 
after which
a single 2-cell $\hc_e$ with boundary label $c(e)a^{-1}$
is glued onto $\dd_e''$, along the $\tc_e$ subpath in $\bo \dd_e''$,
to produce $\dd_e$.  If the word $\tc_e$ is empty,
then $q=r$,
%is a single vertex of the cell $c(e)$, 
and $\dd_e$ is obtained by taking a simple edge
path from a basepoint labeled by the word
$y_{q}$ (i.e., the van Kampen diagram for the
word $y_qy_q^{-1}$ with no 2-cells), 
and attaching a single 2-cell $\hc_e$
with boundary label $c(e)a^{-1}$, gluing the
end of the $y_q$ edge path to the vertex of $\bo \hc_e$
separating the $x_g^{-1}$ and $x_h$ subpaths.
%$q$ of $c(e)$.
%See Figure~\ref{fig:rnfebad}.
It follows from this construction that the diagrams $\dd_i$ 
and the
cell $\hc_e$ can be considered to be subsets of $\dd_e$.

Let $\he$ be the directed edge in $\bo \dd_e$ from 
vertex $\hat g$ to vertex $\hat h$ corresponding
to $a$ in the factorization of $w_e$.
Let $\hat q$ and $\hat r$ be the vertices of the
2-cell $\hc_e$ at the start and end, respectively,
of the path in $\bo \hc_e$
labeled by $\tc_e$.  
%These vertices also lie in $\bo \dd_e$.
Let $J:\he \ra [0,1]$ be a homeomorphism, with
$J(\hat g)=0$ and $J(\hat h)=1$.
%, and let
%$\alpha_e$ denote the set of edges in $\bo \hc_e$
%labeled $x_e$.  
Since $\hc_e$ is a disk, there is
a continuous function $\Xi_e:\he \times [0,1] \ra \hc_e$
such that: (i) For each $p$ in the interior $Int(\he)$, we have
$\Xi_e(p,(0,1)) \subseteq Int(\hc_e)$ and $\Xi_e(p,1)=p$.
(ii) $\Xi_e(J^{-1}(\cdot),0):[0,1] \ra \hc_e$ follows the path
in $\bo \hc_e$ labeled $\tc_e$ from $\hat q$ to $\hat r$
at constant speed.
(iii) $\Xi_e(\hat g,\cdot)$ follows the path
in $\bo \hc_e$ labeled $x_g$ from $\hat q$ to $\hat g$
at constant speed.  (iv) $\Xi_e(\hat h,\cdot)$ follows the path
in $\bo \hc_e$ labeled $x_h$ from $\hat r$ to $\hat h$
at constant speed.  
Let $l_g$, $m_g$, $l_h$, and $m_h$ be the lengths 
of the words $y_q$, $x_g$, $y_r$, and $x_h$ in $A^*$,
respectively.

We give a piecewise definition of the \ehy\  
$\tht_e:\he \times [0,1] \ra \dd_e$
as follows. For any point $p$ in $\he$, 
if $\tc_e$ is a nonempty word, then there is an index
$j \le i \le k$ such that the point $\Xi_e(p,0)$ lies in $\dd_i$.
In the case that $\tc_e=1$, let $\tht_i(\hat q,\cdot)$ 
in the following formula denote the
constant speed path following the geodesic
in $\dd_e$ from $*$ to $\hat q=\hat r$.
Define 
\[ \tht_e(p,t):= \left\{
  \begin{array}{ll}
  \tht_i(\Xi_e(p,0),\frac{1}{a_p}t)          
        & \mbox{if $t \in [0,a_p]$} \\
  \Xi_e(p,\frac{1}{1-a_p}(t-a_p))          
        & \mbox{if $t \in [a_p,1]$} 
  \end{array} \right.
\]
where
\[ a_p:= \left\{
  \begin{array}{ll}
  \frac{2l_g}{l_g+m_g}(\frac{1}{2}-J(p))+J(p)          
        & \mbox{if $J(p) \in [0,\frac{1}{2}]$} \\
  (1-J(p))+\frac{2l_h}{l_h+m_h}(J(p)-\frac{1}{2})          
        & \mbox{if $J(p) \in [\frac{1}{2},1]$} 
  \end{array} \right.
\]
%See Figure~\ref{fig:rnfebad} for an illustration of
%this \ehy.
Note that if $a_p=0$
and $J(p) \in [0,\frac{1}{2}]$, then we must also
have $J(p)=0$ and $l_g=0$.  In this case $p=\hat g$
and $y_q$ is the empty word, and so 
$\tht_i(\Xi_e(p,0),\cdot)=\tht_i(\hat q,\cdot)$ is
a constant path at the basepoint $*$ of $\dd_e$;
hence $\tht_e$ is well-defined in this case.  
The other instances in which $a_p$
can equal 0 or 1 are similar.

The complication in this definition of $\tht_e$ stems from
the need to
ensure that for the endpoint vertices $\hat g$
and $\hat h$ of $\he$, the projections to $X$ of the
paths $\tht_e(\hat g, \cdot)$ and $\tht_e(\hat h,\cdot)$
via the map $\pi_{\dd_e}$ are consistent with
the paths defined for all other edges to these points;
that is, to ensure that the property (n3) of the 
definition of \cnf\ will hold.  
In particular, we ensure that the paths
$\tht_e(\hat g,\cdot)$, $\tht_e(\hat h,\cdot)$
follow the words $y_g$, $y_h$, respectively, in $\bo \dd_e$
at constant speed. 
The van Kampen diagram $\dd_e$ and \ehy\ 
$\tht_e$ are illustrated in Figure~\ref{fig:rnfebad}.
%~\ref{fig:inductivevkdh}.
%\begin{figure}
%\begin{center}
%\includegraphics[width=4in,height=3in]{lune4.eps}
%\caption{$(\dd_e,\tht_e)$ for $e \in \ves$}~\label{fig:inductivevkdh}
%\end{center}
%\end{figure}
\begin{figure}
\begin{center}
\includegraphics[width=4.3in,height=1.7in]{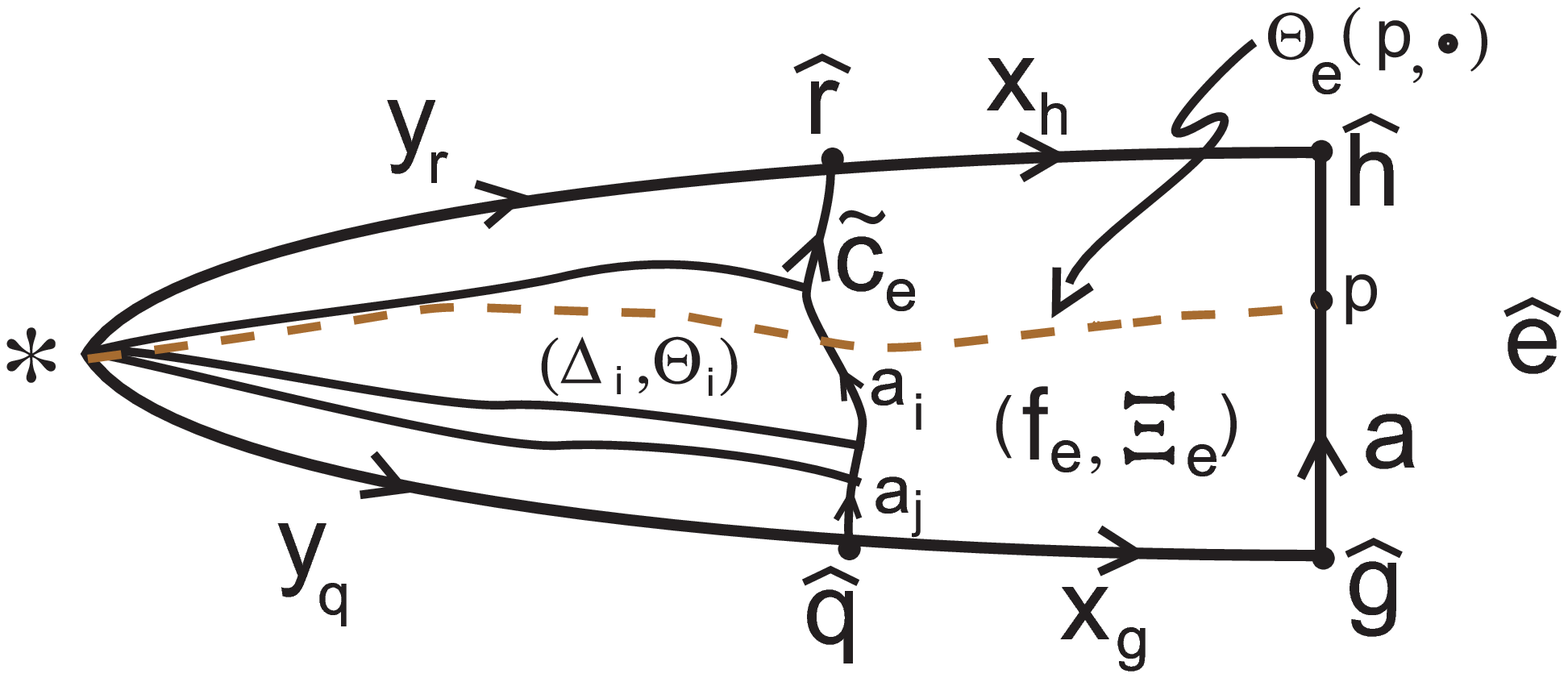}
\caption{$(\dd_e,\tht_e)$ for 
$e$ in $\ves$}\label{fig:rnfebad}
\end{center}
\end{figure}

We now have a collection of van Kampen diagrams
and \ehs\  for the elements of $\vec E(X)$.
To obtain the \cnf\ associated to
the \fstkg, the final step again is to eliminate repetitions.
Given any undirected edge $e$ in $E(X)$, 
let $(\dd_,\tht_e)$ be a \edg\ and \ehy\ constructed
above for one of the orientations of $e$.
Then the collection $\cc$ of prefix-closed normal forms
from the \fstkg,
together with this collection 
$\cle:=\{(\dd_e,\tht_e) \mid e \in E(X)\}$
of van Kampen diagrams and \ehs, is a \cnf\ for $G$.

\begin{definition}
A {\em \rcnf} is a \cnf\ that can be 
constructed from a \fstkg\  by the
above procedure. A {\em \rcf} is a \cfl\ induced
by a \rcnf\ using seashells.
\end{definition}

%Unlike the filling inequalities discussed in earlier
%sections of this paper, it is unclear whether 
%an intrinsic or extrinsic tameness bound on the \ehs\ 
%of a \rcnf\ implies the existence of a Lipschitz equivalent bound
%on \ehs\  for a \rcnf\ of any quasi-isometric group.
%However, 
%For a group $G$ with \stkg\ $(\cc=\{y_g \mid g \in G\},c)$,
 
%We now restrict to the case that the group $G$ is
%\fstkbl, meaning that the image set $\hr=c(\ves)$ of
%the \stkg\ map is finite, so that the \stkg\ presentation
%$\langle A \mid R_C \rangle$ of $G$ is finite.
The extra structure of this recursively
defined \cnf\ $(\cc,\cle)$ 
allows us to compute finite-valued
\tfs\ for $G$.
To analyze the tameness of the \ehs, we 
consider the {\em intrinsic diameter} $\idi(\dd_e)$ and
{\em extrinsic diameter} $\edi(\dd_e)$
of each van Kampen diagram in the collection $\cle$;
that is, 
$\idi(\dd_e) = \max\{d_{\dd_e}(\ep,v) \mid v \in \dd_e^0\}$ and
$\edi(\dd_e) = \max\{d_X(\ep,\pi_{\dd_e}(v)) \mid v \in \dd_e^0\}$, where
$X$ is the Cayley complex of the \stkg\ presentation.
Let $B(n)$ be the ball of radius $n$ (with respect to
path metric distance) in the Cayley
graph $\xx$ centered at $\ep$.
Define the functions 
$\kti, \kte, \kxi, \kxe:\N \ra \N$ by
\begin{eqnarray*}
\kti(n) &:=& \max\{l(y_g) \mid g \in B(n)\}~,\\
\kte(n) &:=& \max\{d_X(\ep,x) \mid \exists~g \in B(n) \text{ such that } 
      x \text{ is a prefix of } y_g \}~,\\
\kxi(n) &:=& \max\{\idi(\dd_e) \mid e \in \ves
              \text{ and the initial vertex of } e \text{ is in } B(n)\}~,\\
\kxe (n) &:=& \max\{\edi(\dd_e) \mid e \in \ves 
              \text{ and the initial vertex of } e \text{ is in } B(n)\}~. 
\end{eqnarray*}
Note that we do not assume that prefixes are proper.
(Also note that the van Kampen diagrams in the 
\cfl\ induced by the \rcnf\ $(\cc,\cle)$ may
not realize the minimal possible intrinsic or 
extrinsic diameter among all van Kampen diagrams
for the same boundary words.)
%Since the group $G$ is infinite,
%for each $n \in \N$ we have $n \le \kte(n) \le \kti(n)$.

We will need to consider coarse distances throughout
the Cayley complex $X$.  
To that end, define the
functions $\mui,\mue:\nn \ra \nn$ by
\begin{eqnarray*}
\mui(n) &:=& \max \{\kti(\lceil n \rceil +  1) + 1, n+1, 
         \kxi(\lceil n \rceil + \maxr+1)\} \text{ and} \\
\mue(n) &:=& \max \{\kte(\lceil n \rceil +  1) + 1, n+1, 
         \kxe(\lceil n \rceil + \maxr+1)\}~,
\end{eqnarray*}
where $\maxr$ is the length of the longest relator
in the \stkg\ presentation $\pp$.  
It follows directly from the definitions that $\kti,\kte,\kxi,\kxe$ 
are nondecreasing functions, and therefore so are $\mui$ and $\mue$.

\begin{theorem}\label{thm:rnftame}
If $G$ is a \fstkbl\ group,  
then $G$ admits an intrinsic \tfi\ for the finite-valued function
$\mui$, and an extrinsic \tfi\ for the finite-valued function
$\mue$.
\end{theorem}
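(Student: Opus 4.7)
The plan is to invoke Propositions~\ref{prop:htpydomain} and~\ref{prop:etistc}, which reduce the theorem to showing that the recursive combed normal filling $(\cc,\cle)$ constructed earlier in this section has every \ehy\ $\tht_e$ intrinsically $\mui$-tame and extrinsically $\mue$-tame; the resulting intrinsic (respectively extrinsic) \cni\ then yields the desired \tfi. I prove these tameness properties by Noetherian induction on the well-founded \stkg\ order $<_c$ on $\ves$, treating $e\in\dgd$ as the base case.

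For $e\in\dgd$, the diagram $\dd_e$ is a line of edges with no 2-cells and $\tht_e(p,\cdot)$ follows a geodesic from $*$ to $p$ at constant speed; coarse distance along such a path is weakly monotone with atomic jumps of at most $\tfrac{1}{2}$, and since $\mui(m),\mue(m)\ge m+1$ for every $m$ the tameness inequality is immediate both intrinsically and extrinsically.

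For $e\in\ves$ with initial vertex $g$, terminal vertex $h$, and $n:=d_X(\ep,g)$, recall that $\dd_e$ is assembled by seashelling the sub-{\edg}s $\dd_j,\ldots,\dd_k$ corresponding to the letters of $\tc_e$ to form $\dd''_e$, and then attaching a single 2-cell $\hc_e$ with boundary $c(e)a^{-1}$. Correspondingly, $\tht_e(p,\cdot)$ splits at $t=a_p$: on $[0,a_p]$ it reparametrizes some $\tht_i(\Xi_e(p,0),\cdot)$ whose image lies in $\dd_i\subseteq\dd_e$, and on $[a_p,1]$ it follows $\Xi_e(p,\cdot)$ through $\hc_e$ from a point of $\bo\hc_e$ to $p$. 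The three summands of $\mui$ (respectively $\mue$) correspond to three uniform estimates: Remark~\ref{rmk:anfhasnf} bounds $d_{\dd_e}(*,v)\le l(y_{\pi_{\dd_e}(v)})\le\kti(d_X(\ep,\pi_{\dd_e}(v)))$ at every vertex $v$ of $\dd_e$ (with the analogous extrinsic bound via $\kte$); moving from a vertex to an adjacent interior point shifts coarse distance by at most $1$; and the bound $|c(e)|\le\maxr$ confines every vertex of $\bo\hc_e$ to $B(n+\maxr)$, so $\idi(\dd_e)\le\kxi(n)$ uniformly bounds the intrinsic diameter of $\dd_e$ (with $\kxe$ the extrinsic analog). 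Combining these estimates with the inductive hypothesis applied to the constituent $\tht_i$ will verify the tameness inequality via a case split on which of the two phases contain the parameters $s<t$.

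The hard part is transferring tameness from $\dd_i$ to $\dd_e$: induction delivers $\mui$-tameness of $\tht_i$ as measured in $\dd_i$, yet we need the inequality in $\dd_e$, and the inclusion $\dd_i\hookrightarrow\dd_e$ can strictly shorten path-metric distances because the \stkg\ normal forms are not assumed geodesic (in contrast to the geodesic hypothesis used in the proof of Proposition~\ref{prop:htpydomain}). The remedy is the $\kxi(\lceil m\rceil+\maxr+1)$ summand of $\mui$: since $p$ lies on the edge between $g$ and $h$, one has $\lceil m\rceil+\maxr+1\ge n$ whenever $m=\td_{\dd_e}(*,\tht_e(p,t))$ is large enough to constrain tameness nontrivially, so $\kxi(\lceil m\rceil+\maxr+1)\ge\idi(\dd_e)$ uniformly dominates every intrinsic coarse distance achievable in $\dd_e$ (with $\kxe$ playing the parallel role extrinsically). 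Combined with the continuity of $\tht_e$ at $t=0$, which handles the regime of very small $t$ automatically, this uniform domination closes the four-case verification and completes the induction.
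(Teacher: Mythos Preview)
Your Noetherian-inductive plan has a genuine gap in the intrinsic case, and the appeal to ``continuity at $t=0$'' does not close it.

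First, invoking Proposition~\ref{prop:htpydomain} is not legitimate here: the implication $(3)\Rightarrow(1)$ there is stated (and proved) only for \emph{geodesic} combed normal fillings, precisely because the seashell gluing preserves intrinsic distances only when the glued boundary words label geodesics.  The stacking normal forms need not be geodesic, so even if every $\tht_e$ were intrinsically $\mui$-tame in $\dd_e$, Proposition~\ref{prop:htpydomain} would not let you conclude that the induced $\ps_w$ is $\mui$-tame in $\dd_w$.  You acknowledge this obstacle at the level of $\dd_i\hookrightarrow\dd_e$, but the identical obstacle recurs at $\dd_e\hookrightarrow\dd_w$, and your proposal does not address that second instance.

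Second, and more seriously, the induction itself does not close.  In the sub-case where both $s$ and $t$ lie in the first phase $[0,a_p]$, the inductive hypothesis gives $\td_{\dd_i}(*,\tht_e(p,s))\le\mui(\td_{\dd_i}(*,\tht_e(p,t)))$, but passing to $\dd_e$ one has $\td_{\dd_e}\le\td_{\dd_i}$ on both sides, so the inequality goes the wrong way on the right-hand side.  Your remedy requires $\lceil m\rceil+\maxr+1\ge n$ where $m=\td_{\dd_e}(*,\tht_e(p,t))$ and $n=d_X(\ep,g)$; but when $t$ is in the first phase, $\tht_e(p,t)$ lies deep inside some $\dd_i$ and need bear no relation to $\hat e$ or to $g$, so $m$ can be arbitrarily small compared with $n$.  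The point $p$ sitting on $\hat e$ tells you only about $\tht_e(p,1)$, not $\tht_e(p,t)$.  ``Continuity at $t=0$'' gives no quantitative bound on $\td_{\dd_e}(*,\tht_e(p,s))$ in terms of $m$; it merely says both go to zero as $t\to 0$, which is irrelevant to the required inequality at fixed $s<t$.

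The paper's proof sidesteps both problems by abandoning induction entirely and working directly in $\dd_w$.  The key observation is that for any $\tau'=\ps_w(p,t')$ lying in the interior of an edge, the recursive construction guarantees there is a (possibly deeply nested) subdiagram $\dd_e$ with $\tau'\in\hat e$ such that the entire initial segment $\ps_w(p,\cdot)|_{[0,t']}$ is a reparametrization of $\tht_e(\tau',\cdot)$.  Because the edge $e$ is chosen \emph{after} $\tau'$, one is guaranteed that $\tau$ (which is either $\tau'$ or in the open cell $\hc_e$) and the vertex $g$ share a closed cell, whence $d_X(\ep,g)\le\lceil\td_X(\ep,\pi_{\dd_w}(\tau))\rceil+\maxr+1\le\lceil\td_{\dd_w}(*,\tau)\rceil+\maxr+1$, and the bound $\td_{\dd_w}(*,\sigma)\le\idi(\dd_e)\le\kxi(d_X(\ep,g))$ does the rest.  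Your intuition that the $\kxi$ summand is what makes everything work is correct; what is missing is that one must pick the right $e$ for each $t$, rather than fix $e$ once and induct.
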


\begin{proof}
Let $\cld=\{(\dd_w,\ps_w)\}$ be the \cfl\ 
obtained via the seashell method
from the \rcnf\ $(\cc,\cle)$ associated to
a \fstkg\  $(\cc, c)$ for $G$.
As usual, we write $\cc=\{y_g \mid g \in G\}$.
Let $\dd_w$ be any of the van Kampen diagrams 
in $\cld$,
let $p$ be any point in $\bo \dd_w$, and let
$0 \le s<t \le 1$.
To simplify notation later, we also 
let $\sigma:=\ps_w(p,s)$ and $\tau:=\ps_w(p,t)$.

If $\tau$ is in the 1-skeleton $\dd_w^1$, then
let $\tau':=\tau$ and $t':=t$.  Otherwise, $\tau$
is in the interior of a 2-cell, and there is a
$t \le t' \le 1$ such that $\ps_w(p,[t,t'))$ is
contained in that open 2-cell, and 
$\tau':=\ps_w(p,t') \in \dd_w^1$.
%Either way, we have 
%$\td_{\dd_w}(*,\tau') < \td_{\dd_w}(*,\tau)+\mxr$, where
%$\mxr$ is the length of the longest relator in the finite
%presentation.

\smallskip

{\em Case I.  $\tau' \in \dd_w^0$ is a vertex.}
In this case 
the path $\ps_w(p,\cdot):[0,t'] \ra X$
follows the edge path labeled $y_{\pi_{\dd_w}(\tau')}$ from $*$, 
through $\sigma$, to $\tau=\tau'$ (at constant speed).  
%In this case $\tau=\tau'$.  
There is a vertex $\sigma'$ on this path lying on the
same edge as $\sigma$ (with $\sigma'=\sigma$ if $\sigma$ is
a vertex) satisfying
$\td_{\dd_w}(*,\sigma) < d_{\dd_w}(*,\sigma')+1$
and $\td_{X}(\ep,\pi_{\dd_w}(\sigma)) < d_{X}(\ep,\pi_{\dd_w}(\sigma'))+1$.
The subpath from $*$ to $\sigma'$ is labeled by a prefix $x$ of 
the word $y_{\pi_{\dd_w}(\tau)}$.
Then 
\[
\td_{\dd_w}(*,\sigma) < d_{\dd_w}(*,\sigma')+1 \le
l(y_{\pi_{\dd_w}(\tau)})+1 \le \kti(d_X(\ep,\pi_{\dd_w}(\tau)))+1
\le \kti(d_{\dd_w}(*,\tau))+1
\]
\[
\text{ and }
 \td_{X}(\ep,\pi_{\dd_w}(\sigma)) 
< d_{X}(\ep,\pi_{\dd_w}(\sigma'))+1 \le
\kte(d_X(\ep,\pi_{\dd_w}(\tau)))+1~.
\]

\smallskip

{\em Case II. $\tau'$ is in the interior of an edge $\he$ of $\dd_w$.} 
From the seashell construction, the path $\ps_w(p,\cdot):[0,1] \ra \dd_w$
lies in a subdiagram $\dd'$ of $\dd_w$ such that
$\dd'$ is a \edg\ in
$\cle$.  From the construction of the \rcnf,
the subpath $\ps_w(p,\cdot):[0,t'] \ra \dd_w$
lies in a subdiagram $\dd_e$ of $\dd'$ for some
pair  $(\dd_e,\tht_e) \in \cle$ associated to
a directed edge $e \in \dgd \cup \ves$.
%$\pi_{\dd_w}(\tau')$ lies in the interior of this edge in $X$.
%Then $\tau'$ lies in the interior of
%the corresponding edge $\he$ in $\dd_e$,
Moreover, $\he$ is the edge of $\dd_e$ corresponding to $e$, 
and the path $\ps_w(p,\cdot):[0,t'] \ra \dd_w$ is
a bijective (orientation preserving)
reparametrization of the path $\tht_e(\tau',\cdot):[0,1] \ra \dd_e$.

\smallskip

{\em Case IIA.  $e \in \dgd$.} 
The van Kampen diagram $\dd_e$ contains no 2-cells, and
the path $\tht_e(\tau',\cdot):[0,1] \ra \dd_e$
follows the edge path labeled by a normal form
$y_g \in \cc$ from $*$ to 
$\hat g$ (at constant speed), 
and then follows the portion of $\he$ from $\hat g$ to $\tau'$,
where $\hat g$ is the endpoint of $\he$
closest to $*$ in the diagram $\dd_e$.
In this case, $\tau$ must also lie in $\dd_w^1$, and
so again we have $\tau=\tau'$.
Since $\hat g$ and $\tau$ lie in the same closed 1-cell,
we have $d_{\dd_w}(*,\hat g)< \lceil \td_{\dd_w}(*,\tau) \rceil +1$,
and similarly for their images (via $\pi_{\dd_w}$) lying in the same
closed edge of $X$.

If $\sigma$ lies in the $y_{g}$ path, then 
Case I applies to that path, with $\tau$ replaced by the 
vertex $\hat g$.  Combining
this with the inequality above and applying the nondecreasing
property of the functions $\kti$ and $\kte$ yields
\[
\td_{\dd_w}(*,\sigma) <  \kti(d_{\dd_w}(*,\hat g))+1
\le \kti(\lceil \td_{\dd_w}(*,\tau) \rceil+1)+1
\text{ and }
\]
\[
 \td_{X}(\ep,\pi_{\dd_w}(\sigma)) < \kte(d_X(\ep,\pi_{\dd_w}(\hat g)))+1
\le \kte(\lceil \td_X(\ep,\pi_{\dd_w}(\tau)) \rceil +1)+1~.
\]

On the other hand, if $\sigma$ lies in $\he$, 
then $\sigma$ and $\tau$ are contained in a common edge.
Hence 
\[
\td_{\dd_w}(*,\sigma) <  \td_{\dd_w}(*,\tau) +1
\text{  and  } \td_{X}(\ep,\pi_{\dd_w}(\sigma)) <  
\td_X(\ep,\pi_{\dd_w}(\tau)) +1~.
\]

\smallskip

{\em Case IIB. $e \in \ves$.}  
%We prove this case by Noetherian
%induction using the well-founded ordering $<$.
In this case either $\tau=\tau'$, or $\tau$ is
in the interior of the cell $\hc_e$ of the
diagram $\dd_e$.  
%Let $\hat g$ be the vertex of $\he$ corresponding
%to the initial vertex $g=\pi_{\dd_w}(\hat g)$
%of the directed edge $e$.
Let $g$ be the initial vertex 
of the directed edge $e$.
Then $g$ and $\pi_{\dd_w}(\tau)$ lie in a common
edge or 2-cell of $X$, and so
%$d_{X}(*,\hat g) < \lceil \td_{X}(*,\pi_{\dd_w}(\tau)) \rceil +\maxr+1$ and 
$d_{X}(\ep,g) < \lceil \td_{X}(\ep,\pi_{\dd_w}(\tau)) \rceil +\maxr+1$,
where $\maxr$ is the length of the longest relator in 
the presentation of $G$.

Note that distances in the  subdiagram $\dd_e$
are bounded below by distances in $\dd_w$.
In this case, combining these inequalities and the
nondecreasing properties of $\kxi$ and $\kxe$ yields
\begin{eqnarray*}
\td_{\dd_w}(*,\sigma) &\le&  \td_{\dd_e}(*,\sigma) 
\le \idi(\dd_e) \le \kxi(d_{X}(\ep,g)) \\
&\le&  \kxi(\lceil d_{X}(\ep,\pi_{\dd_w}(\tau)) \rceil +\maxr+1)
\le \kxi(\lceil d_{\dd_w}(*,\tau)) \rceil +\maxr+1)
\text{ and } \\
 \td_{X}(\ep,\pi_{\dd_w}(\sigma)) 
&=&\td_{X}(\ep,\pi_{\dd_e}(\sigma)) 
\le \edi(\dd_e) \le \kxe(d_{X}(\ep,g)) \\
&\le& \kxe(\lceil \td_{X}(*,\pi_{\dd_w}(\tau)) \rceil +\maxr+1)~.
\end{eqnarray*}

%The point $\sigma=\tht_e(\tau',s')$
%for some $s' \in [0,1]$.
%From the construction above of the \ehy\  $\tht_e$, 
%recall that on the interval
%$[0,a_p]$ the path
%$\tht_e(\tau',\cdot)$ follows the path
%$\tht_i(q,\cdot):[0,1] \ra dd_e$ from $*$ to
%$q:=\Xi_e(\tau',0)$, and on $[a_p,1]$ traverses the
%cell $\hc_e$ from $q$ to $\tau'$.
%Suppose first that $\sigma$ lies on the $\tht_i$ path.  If $\tx_e$ is
%the empty word, then Case I applies, and if $\tx_e$ is not empty,
%then either Case II or induction on $<$ applies, 

Therefore in all cases, we have
$\td_{\dd_w}(*,\sigma) \le \mui(\td_{\dd_w}(*,\tau))$
and $\td_{X}(\ep,\pi_{\dd_w}(\sigma)) \le \mue(\td_{X}(\ep,\pi_{\dd_w}(\tau)))$,
as required.
\end{proof}

The \tfi\ 
bounds in Theorem~\ref{thm:rnftame} are not sharp in general.
In particular, we will improve upon these bounds for the example of
almost convex groups in Section~\ref{subsec:ac}.

%In our analysis of intrinsic tameness, we 
%will make use of the function $j:\N \ra \N$ defined by
%$$
%j(n) := \max\{l(y_g) \mid g \in G \text{ with } d_X(\ep,g) \le n\}~,
%$$
%where $l$ denotes word length in $A^*$, and
%the function $\tj:\nn \ra \nn$ defined by
%$$
%\tj(n) := j(\lceil n \rceil)+1~.
%$$

Recall from Section~\ref{subsec:stackdef}
that the group $G$ is \afstkbl\ if 
there is a \fstkg\ $(\cc,c)$ over a 
finite generating set $A$ of $G$
%presentation $\langle A \mid R \rangle$ of $G$
for which the subset

$\alg=\{(w,a,x) \mid w \in A^*, a \in A, x=c'(\wa) \}$

\noindent of $A^* \times A \times A^*$ is computable,
where 
$\wa$ denotes the directed edge in $X$ labeled $a$ from $w$ to $wa$,
and $c'(\wa)=c(\wa)$ for $\wa \in \ves$ and $c'(\wa)=a$ for 
$\wa \in \dgd$.
% and $\hr=c(\ves)$ is the \stkg\ image set.
%is computable.  
For \afstkbl\ groups, the procedure 
described above for building a \rcnf\ from
the \stkg\ is again algorithmic.  

Note that
whenever the group $G$ admits a \tfi\ for
a function $f:\nn \ra \nn$, and $g:\nn \ra \nn$
satisfies the property that $f(n) \le g(n)$
for all $n \in \nn$, then $G$ also admits the
same type of \tfi\ for the function $g$.
Applying this, we obtain a computable bound on \tfs\ for
\afstkbl\  groups.  

\begin{theorem}\label{thm:astktf}
If $G$ is an \afstkbl\ group, then $G$ satisfies
both intrinsic and extrinsic \tfs\ with respect to
a recursive function.
\end{theorem}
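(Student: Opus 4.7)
The plan is to derive this directly from Theorem~\ref{thm:rnftame}, which supplies intrinsic and extrinsic \tfs\ for every \fstkbl\ group with respect to the finite-valued functions $\mui$ and $\mue$. Since any nondecreasing function dominating a valid \tfi\ bound is itself a valid \tfi\ bound, it suffices to exhibit recursive upper bounds for $\mui$ and $\mue$. These two functions are assembled from the constant $\maxr$ (the length of a longest relator in the \stkg\ presentation) and the four auxiliary functions $\kti$, $\kte$, $\kxi$, $\kxe$, so I would reduce the whole problem to showing that each of these ingredients is computable under the hypothesis that $\alg$ is recursive.

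I would handle $\maxr$ first: the \stkg\ image $\hr$ is finite and visible inside $\alg$ as the finite set of third coordinates $z$ that appear in triples $(w,a,z) \in \alg$ with $z \ne a$, so enumeration of $\alg$ eventually exposes $\hr$, hence $R_c$, hence $\maxr$. For the ball $B(n)$, note that the \stkg\ reduction procedure becomes an effective algorithm once $\alg$ is recursive: given a pair $(w,a)$ one computes $c'(\wa)$ by enumerating candidates $z$ and testing $(w,a,z) \in \alg$, and the whole reduction halts at the normal form $y_g$ by Proposition~\ref{prop:solvwp}. Feeding every word of length at most $n$ into this procedure and collecting the outputs yields the finite set $\{y_g : g \in B(n)\}$, from which $\kti(n)$ is immediate; $\kte(n)$ is obtained by then enumerating the prefixes of these normal forms and computing each coarse distance $d_X(\ep,x)$, which is itself computable from solvability of the word problem.

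The heart of the argument is the computation of $\kxi(n)$ and $\kxe(n)$. For each $g \in B(n)$ and each $a \in A$, I would use $\alg$ to decide whether $\ega \in \dgd$ or $\ega \in \ves$, and in the second case to read off $c(\ega)$. I then build $\dd_{\ega}$ by implementing, as a recursive subroutine, the inductive construction from Section~\ref{sec:rnf}: factor $c(\ega) = a_1 \cdots a_n$, consider the intermediate edges $e_i := e_{g a_1 \cdots a_{i-1},\, a_i}$, return a trivial line-segment diagram if $e_i \in \dgd$, and recurse on $e_i$ otherwise, gluing the results by the seashell procedure and capping with the 2-cell labelled $c(\ega)a^{-1}$. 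This produces a finite combinatorial 2-complex whose intrinsic and extrinsic diameters can be read off directly, and maximising over the finitely many $\ega$ with $g \in B(n)$ produces $\kxi(n)$ and $\kxe(n)$.

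The point I expect to require the most care is termination of that recursion: the subroutine has no direct access to the \stkg\ ordering $<_c$, only to $\alg$, so termination must be argued abstractly from the axioms of a \stkg. Whenever the subroutine calls itself on $e_i \in \ves$, the relation $e_i <_c \ega$ holds by the definition of $<_c$, and property (S2) guarantees that $<_c$ is a well-founded strict partial order with finite predecessor sets; Noetherian induction then forces the recursion to halt after finitely many calls. Combining the previous paragraphs, $\mui$ and $\mue$ are both recursive functions, and Theorem~\ref{thm:rnftame} completes the proof.
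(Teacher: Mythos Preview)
Your approach is essentially the same as the paper's: reduce via Theorem~\ref{thm:rnftame} to bounding $\kti,\kte,\kxi,\kxe$ by recursive functions, compute normal forms with the \stkg\ reduction procedure to handle $\kti$ and $\kte$, and rerun the inductive construction of the \edg s to handle $\kxi$ and $\kxe$. The paper streamlines slightly by using $\kte\le\kti$ and $\kxe\le\kxi$, and rather than building $\dd_e$ in full it only records the set $L_e$ of normal forms of its vertices, taking $k(w,a)=\max\{l(y)\mid y\in L_e\}$ as an upper bound on $\idi(\dd_e)$; your variant of building $\dd_e$ explicitly and reading off both diameters is equally valid.

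One step of yours is unjustified as written: you claim that ``enumeration of $\alg$ eventually exposes $\hr$'' and hence $\maxr$. The set $\alg$ is infinite, so this enumeration never halts, and there is no stopping criterion telling you when every element of the finite set $\hr$ has appeared. The paper's Remark~\ref{rmk:quasiastk} flags exactly this subtlety. Fortunately it is irrelevant to the theorem: $\maxr$ is a fixed natural number determined by the \stkg, and the statement only asserts \emph{existence} of a recursive \tfi\ bound, not an algorithm to produce it uniformly from $\alg$. With $\maxr$ treated as a constant and your recursive computations of $\kti,\kte,\kxi,\kxe$ in hand, the functions $\mui$ and $\mue$ are recursive and the proof goes through.
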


\begin{proof}
From 
Theorem~\ref{thm:rnftame}, it suffices to show that the functions
$\kti$, $\kte$, $\kxi$, and $\kxe$ are 
bounded above by recursive functions.
%Moreover, since distances in the Cayley complex $X$ are
%bounded above by distances in van Kampen diagrams,
%we have $\kte(n) \le \kti(n)$ and $\kxe(n) \le \kxi(n)$
%for all $n$, so it suffices to find recursive upper bounds
%for $\kti$ and $\kxi$.

We can write the functions 
\begin{eqnarray*}
\kti(n) &=& \max\{l(y) \mid y \in \cc_n\} \hspace{0.1in} \text{ and}\\
\kte(n) &=& \max\{d_X(\ep,x) \mid x \in P_n\} 
\end{eqnarray*}
where 
$\cc_n := \{ y_g \in \cc \mid g \in G \text{ and } d_X(\ep, g) \le n \}$
and $P_n$ is the set of prefixes of words in $\cc_n$.
Now for any prefix $x$ of a word $y \in \cc_n$, we have 
$l(x) \le l(y)$, and so we can also write
\begin{eqnarray*}
\kti(n) &=& \max\{l(x) \mid x \in P_n\}.
\end{eqnarray*}
Since distance in a van Kampen diagram $\dd$ always
gives an upper bound for distance, via the map $\pi_\dd$,
in the Cayley complex $X$,
then for all $n \in \N$,
we have $\kte(n) \le \kti(n)$.  
Moreover, $\idi(\dd)$ must always be an
upper bound for $\edi(\dd)$, and so
$\kxe(n) \le \kxi(n)$ for all $n$.  Thus it
suffices to find recursive upper bounds for $\kti$ and $\kxi$.

For each word $w$ over $A$, a \stkg\ reduction algorithm
for computing the the
associated word $y_w$ in $\cc$ was given in
Section~\ref{subsec:stackdef}.
The set of words 
$\cc_n$ 
is also the set 
$\cc_n=\{y_u \mid u \in \cup_{i=0}^n A^i\}$ of normal
forms for words of length up to $n$.  
By enumerating the finite set of words of length
at most $n$, computing their normal forms in $\cc$ with
the reduction algorithm, 
and taking the maximum word length that
occurs, we obtain $\kti(n)$.  Hence the function
$\kti$ is computable.

Given $w \in A^*$ and $a \in A$, we compute 
the two words $y_w$ and $y_{wa}$ and store them
in a set $L_e$.
Next we follow the definition of 
the \edg\ $\dd_e$
%and \ehy\ $(\dd_e,\tht_e)$
for the edge $e=e_{w,a}$
in the recursive construction of the \nff\ from
Section~\ref{subsec:stackdef}.
%If the word $y_w a y_{wa}^{-1}$ freely reduces to the empty word, 
If $(w,a,a) \in \alg$,
then $e \in \dgd$ and we add no
other words to $L_e$.  On the other hand, if 
%$y_w a y_{wa}^{-1}$ does not freely reduce to 1,
$(w,a,a) \notin \alg$,
then $e \in \ves$.  In the latter case,
by enumerating the finitely many words $x \in \hr$,
and checking whether or not $(w,a,x)$ lies in the computable
set $\alg$, 
we can compute the word $c(e)=x$.
Write $x=a_1 \cdots a_n$ with each $a_i \in A$.
For $1 \le i \le n$, we compute the normal forms $y_i$ in $\cc$
for the words $wa_1 \cdots a_i$, and add these words to
the set $L_e$.  For each pair $(y_{i-1},a_i)$, we determine 
the word $x_i$ such that
$(y_i,a_i,x_i) \in \alg$. 
If $x_i \neq a_i$, we write $x_i=b_1 \cdots b_m$ 
with each $b_j \in A$, and add the normal forms
for the words $y_{i-1}b_1 \cdots b_j$ to $L_e$ for each $j$.
Repeating this process through all of the
steps in the construction of $\dd_e$, we must, after finitely many
steps, have no more words to add to $L_e$. 
The set $L_e$ now contains the normal form
$y_{\pi_{\dd_e}(v)}$ for each vertex $v$ of the
diagram $\dd_e$.
Calculate $k(w,a):=\max\{l(y) \mid y \in L_e\}$.

Now as in
Remark~\ref{rmk:anfhasnf}, for 
each vertex $v$ of the \edg\ $\dd_e$ there is
a path in $\dd_e$ from the basepoint to $v$
labeled by a word in the set $L_e$.
Then $\idi(\dd_e) \le k(w,a)$, and we have
an algorithm to compute $k(w,a)$.

Now we can write $\kxi(n) \le k_r'(n)$ for all $n \in \N$, where
\begin{eqnarray*}
k_r'(n) &:=& \max \{k(w,a) \mid w \in \cup_{i=0}^n A^i, a \in A
%, e_{w,a} \in \ves
\}.
\end{eqnarray*}
Repeating the computation of  $k(w,a)$
above for all words $w$ of length at most $n$
and all $a \in A$, we can compute
this upper bound $k_r'$ for $\kxi$, as required.
\end{proof}

\begin{remark}\label{rmk:quasiastk}
{\em Although the proof of Theorem~\ref{thm:astktf}
shows in the abstract that an algorithm must exist
to compute $k_r'(n)$, this proof does not give a
method to find this algorithm starting from the
computable set $\alg$.  In particular, although
every finite set is recursively enumerable,
it is not clear how
to enumerate the finite set $\hr$.  In practice, however,
for every example we will discuss, we start with both
a finite presentation $\langle A \mid R \rangle$
for the group $G$ and a \stkg\ that
(re)produces that presentation.  In that case, the set
$\hr$ must be contained in the finite set 
$R':=\{x \in A^* \mid \exists a \in A$ with $xa \in R\}$.  
Then we can replace
the enumeration of $\hr$ with an enumeration of $R'$, 
which can be computed from $R$.}
\end{remark}

\eject

%%%%%%%%%%%%%%%%%%%%%%%%%%%%%%%%%%%%%%%%%%%%%%%%%%%%%%%%%%%%%%%%%%%%%%%%%%%%
%%%%%%%%%%%%%%%%%%%%%%%%%%%%%%%%%%%%%%%%%%%%%%%%%%%%%%%%%%%%%%%%%%%%%%%%%%%%
%%%%%%%%%%%%%%%%%%%%%%%%%%%%%%%%%%%%%%%%%%%%%%%%%%%%%%%%%%%%%%%%%%%%%%%%%%%%

\section{Examples of stackable groups, and their tame filling 
                   inequalities}\label{sec:examples}

%%%%%%%%%%%%%%%%%%%%%%%%%%%%%%%%%%%%%%%%%%%%%%%%%%%%%%%%%%%%%%%%%%%%%%%%%%%%
%%%%%%%%%%%%%%%%%%%%%%%%%%%%%%%%%%%%%%%%%%%%%%%%%%%%%%%%%%%%%%%%%%%%%%%%%%%%

%%%%%%%%%%%%%%%%%%%%%%%%%%%%%%%%%%%%%%%%%%%%%%%%%%%%%%%%%%%%%%%%%%%%%%%%%%%%
%%%%%%%%%%%%%%%%%%%%%%%%%%%%%%%%%%%%%%%%%%%%%%%%%%%%%%%%%%%%%%%%%%%%%%%%%%%%
%%%%%%%%%%%%%%%%%%%%%%%%%%%%%%%%%%%%%%%%%%%%%%%%%%%%%%%%%%%%%%%%%%%%%%%%%%%%

\subsection{Groups admitting complete
     rewriting systems}\label{sec:rs}

%%%%%%%%%%%%%%%%%%%%%%%%%%%%%%%%%%%%%%%%%%%%%%%%%%%%%%%%%%%%%%%%%%%%%%%%%%%%
%%%%%%%%%%%%%%%%%%%%%%%%%%%%%%%%%%%%%%%%%%%%%%%%%%%%%%%%%%%%%%%%%%%%%%%%%%%%

$~$

\vspace{.1in}

Recall that a {\em finite complete rewriting system}
(finite {\em CRS}) for a group $G$ consists of a finite set $A$
and a finite set of rules $R \subseteq A^* \times A^*$
%with $u,v \in A^*$ 
(with each $(u,v) \in R$ written $u \ra v$)
such that 
as a monoid, $G$ is presented by 
$G = Mon\langle A \mid u=v$ whenever $u \ra v \in R \rangle,$
and the rewritings
$xuy \ra xvy$ for all $x,y \in A^*$ and $u \ra v$ in $R$ satisfy:
\begin{itemize}
\item {\em Normal forms:} Each $g \in G$ is 
represented by exactly one {\em irreducible} word 
(i.e. word that cannot be rewritten)
over $A$.
\item {\em Termination:} The (strict) partial ordering
$x>y$ if $x \rightarrow
x_1 \rightarrow ... \rightarrow x_n \rightarrow y$ is 
well-founded.
($\not\exists$ infinite chain $w \ra x_1 \ra x_2 \ra \cdots$.)
\end{itemize}

Given any finite CRS $(A,R)$ for $G$, there is another
finite CRS $(A,R')$ for $G$ with the same
set of irreducible words such that the CRS is {\em minimal}.  
That is, for each $u \ra v$ in $R'$, the word $v$
and all proper subwords of the word $u$ 
are irreducible (see, for example, \cite[p.~56]{sims}).
Let $A'$ be the closure of $A$ under inversion.
For each letter $a \in A' \setminus A$,
there is an irreducible word $z_a \in A^*$ with $a =_G z_a$.
Let $R'':=R' \cup \{a \ra z_a \mid a \in A' \setminus A\}$.
Then  $(A',R'')$ is also a minimal finite CRS for $G$,
again with the same set of irreducible normal forms
as the original CRS $(A,R)$.
%Thus by replacing $A$ by $A'$ and $R$ by $R"$ if
%necessary, we can assume that $A$ is inverse-closed. 
For the remainder of this paper, we will assume that
all of our complete rewriting systems are minimal
and have an inverse-closed alphabet.

For any complete rewriting system $(A,R)$, there is a natural
associated symmetrized group presentation 
$\langle A \mid R' \rangle$, where 
%$A'=A \cup \{a^{-1} \mid a \in A$ and $a^{-1} \notin A\}$ and 
$R'$ is the closure of the relator set 
$\{uv^{-1} \mid u \ra v \in R\} \cup \{aa^{-1} \mid a \in A'\}$
under free reduction (except the empty word), 
inversion, and cyclic conjugation.

Given any word $w \in A^*$, we write
$w \ras w'$ if there is any sequence of rewritings
$w=w_0 \ra w_1 \ra \cdots \ra w_n=w'$ (including
the possibility that $n=0$ and $w'=w$).
A {\em prefix rewriting}
of $w$ with respect to the complete rewriting system $(A,R)$
is a sequence of rewritings $w=w_0 \ra \cdots \ra w_n=w'$,
written $w \prs w'$,
such that at each $w_i$,
the shortest possible reducible prefix is rewritten to obtain
$w_{i+1}$.  
When $w_n$ is irreducible,
the number $n$ is the {\em prefix rewriting length} of $w$,
denoted $prl(w)$.

In~\cite{hmeiertcacrs} Hermiller and Meier constructed a
\dia\  1-combing associated to a finite complete rewriting
system. 
In Theorem~\ref{thm:crsrecit}, we 
utilize
the analog of their construction 
to build a \stkg\ from a finite CRS   
(whose canonical 1-combing built from the
\rcnf\ 
is the one defined in \cite{hmeiertcacrs}).
%In Corollary~\refin order to obtain a recursive
%bound on \tfs.

\begin{theorem}\label{thm:crsrecit}
If the group $G$ admits a
finite complete rewriting system,
%, and let $\pp$ be the symmetrized 
%presentation obtained from this system.  
then 
%pair $(G,\pp)$ 
$G$ is \rstkbl.
%\ and 
%satisfies both intrinsic and extrinsic
%\tfs\ with respect to a recursive function.
\end{theorem}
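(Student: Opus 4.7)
The plan is to build a stacking directly from one-step prefix rewriting. As set up at the start of this section, assume the given CRS $(A,R)$ is minimal and $A$ is inverse-closed, and take $\cc$ to be the set of irreducible words over $A$. Minimality ensures that a word is irreducible iff it contains no LHS of a rule as a subword, so $\cc$ is the complement of the finite union $\bigcup_u A^* u A^*$ of regular languages, hence regular; $\cc$ is prefix-closed; and the empty word is the normal form of $\ep$. For each $g \in G$ let $y_g \in \cc$ denote the unique normal form.

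To define the stacking map, let $\wa \in \ves$. Then $y_w a \neq y_{wa}$ so $y_w a$ is reducible, and since $y_w$ is irreducible any LHS $u$ occurring in $y_w a$ must include the final letter $a$; thus $u$ is a suffix of $y_w a$. Pick such a rule $u \to v$, write $u = u_1 \cdots u_k$ with $u_k = a$ and $v = v_1 \cdots v_m$, so $y_w = z u_1 \cdots u_{k-1}$ for some $z$, and set $c(\wa) := u_{k-1}^{-1} \cdots u_1^{-1} v_1 \cdots v_m$. Then (S1) is immediate from $c(\wa) =_G (u_1 \cdots u_{k-1})^{-1} u = a$, and (S3) holds because $c(\wa)$ depends only on the chosen rule, so $\hr$ injects into the finite set $R$. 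The path from $w$ labeled by $c(\wa)$ first retraces the suffix $u_1 \cdots u_{k-1}$ of $y_w$ backwards to the vertex $z$, then moves forward via $v$ to $zv = wa$. The backward edges all lie in $\dgd$, since each vertex traversed is a prefix of the irreducible word $y_w$ and hence its own normal form, verifying the second clause of degeneracy.

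For (S2), consider the forward edges $e_{zv_1 \cdots v_{j-1}, v_j}$ that lie in $\ves$, and let $g_{j-1}$ denote the group element of $zv_1 \cdots v_{j-1}$. The aim is to show each such $e'$ is strictly smaller than $\wa$ via the well-founded measure $\wa \mapsto prl(y_w a)$. The single step $y_w a \to zv$ gives $prl(y_w a) = 1 + prl(zv)$. A parallel-simulation argument tracking that prefix rewriting of a prefix of a word proceeds in lockstep with that of the whole word (since the shortest reducible prefix of a prefix of $w$, as long as it exists, is also the shortest reducible prefix of $w$) shows $prl(zv_1 \cdots v_j) \leq prl(zv)$, and the same kind of argument applied to $zv_1 \cdots v_{j-1}$ inside $zv_1 \cdots v_j$ yields $prl(y_{g_{j-1}} v_j) \leq prl(zv_1 \cdots v_j)$. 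Chaining gives $prl(y_{g_{j-1}} v_j) \leq prl(zv) = prl(y_w a) - 1 < prl(y_w a)$, so every direct $<_c$-predecessor of $\wa$ has strictly smaller measure; hence $<_c$ embeds in a well-founded order on $\ves$ and is well-founded and irreflexive. Since each edge has at most $|c(\wa)| \leq \maxr - 1$ direct $<_c$-predecessors (with $\maxr$ bounding rule length), K\"onig's lemma then gives finitely many predecessors overall, completing (S2). The main obstacle is this parallel-simulation argument, which requires careful bookkeeping that the dynamics of prefix rewriting cannot create genuinely new shorter reductions when one extends a word on the right.

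Finally, $\alg$ is recursive: given $(w,a)$, compute $y_w$ and $y_{wa}$ using the CRS rewriting algorithm, compare them to decide whether $\wa \in \dgd$ or $\wa \in \ves$, and in the latter case identify the rule $u \to v$ from the LHS-suffix of $y_w a$ to read off $c(\wa)$. Combined with the regularity of $\cc$, this proves $G$ is \rstkbl.
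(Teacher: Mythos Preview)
Your proof is correct and follows essentially the same approach as the paper: the stacking map $c(\wa)=\tilde u^{-1}v$ built from the unique suffix rule $\tilde u a\to v$, the well-foundedness of $<_c$ via the prefix rewriting length $prl$, and the regularity/recursiveness arguments all match. Your chain of inequalities $prl(y_{g_{j-1}}v_j)\le prl(zv_1\cdots v_j)\le prl(zv)<prl(y_wa)$ unpacks what the paper phrases more tersely as ``the prefix rewriting sequence from $y_{g'}a'v_2$ is a proper subsequence of the prefix rewriting of $y_ga$,'' and your lockstep justification for these inequalities is exactly the point (left implicit in the paper) that prefix rewriting a word first completely reduces any given prefix before touching the suffix.
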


\begin{proof}
Let $\cc=\{y_g \mid g \in G\}$ be the set of irreducible words from 
a minimal
finite CRS $(A,R)$ for $G$, where $A$ is inverse-closed.
Then $\cc=A^* \setminus \cup_{u \ra v \in R}A^*uA^*$ is a regular language.
Let $\ga$ be the Cayley graph for the pair $(G,A)$.
%Let $\pp=\langle A' \mid R' \rangle$
%be the symmetrized presentation 
%associated to the rewriting system, with Cayley complex $X$.
Note that prefixes of irreducible
words are also irreducible, and so $\cc$ is a 
prefix-closed set of normal 
forms for $G$ over $A$.
%Note that in the Cayley complex $X$ of the 
%symmetrized presentation $\pp=\langle A' \mid R' \rangle$
%associated to the rewriting system, the set of paths that
%are labeled by the elements of $\cc$ yield a
%tree in $X$.
%Let $\ct$ be the corresponding
%tree in the Cayley complex $X$ for the 
%It follows that each of these normal forms must
%also label a simple path in  the Cayley complex
%$X$ of the 

As usual, whenever $e$ is a directed edge in $\ga$
with label $a$
and initial vertex $g$,  then $e$ lies in the set
$\dgd$ of degenerate edges if and only if
$y_gay_{ga}^{-1}$ freely reduces to the empty word,
and otherwise $e \in \ves$.

%As usual $\dgd$ is the set of directed edges $e$  in $\ga$
%such that if $e$ has label $a$
%and initial vertex $g$  then
%$y_gay_{ga}^{-1}$ freely reduces to the empty word.
%and $\ves = \vec E(\ga) \setminus \dgd$.

Given a directed edge $e \in \ves$ with
initial vertex $g$ and label $a$, the word
$y_ga$ is reducible (since this edge is not
in $\dgd$).
%; i.e., $e$ is not in the normal form tree $\ct$).  
Since $y_g$ is
irreducible, the shortest reducible prefix of
$y_ga$ is the entire word.
Minimality of the rewriting system $R$ implies that
there is a unique factorization $y_g=w\tilde u$ 
such that $\tilde ua$ is the left
hand side of a unique rule $\tilde ua \ra v$ in $R$; that is,
$y_ga \ra wv$ is a prefix rewriting.
Define $c(e):=\tilde u^{-1}v$.
%and $\tc_e:=v$.

Property (S1) of the definition of \stkg\ is 
immediate.  To check property (S2), 
we first let $p$ be the path in $\ga$ that starts at $g$
and follows the word $c(e)$.
Since the word $\tilde u$ is a suffix $x_g$ of the normal form $y_g$,
then the edges in the path $p$
that correspond to the letters in 
$\tilde u^{-1}$ all lie in the set $\dgd$ of degenerate edges.  
Hence we can choose $\tc_e=v$.
For each directed edge $e'$ in the subpath of $p$ labeled by $v$,
either $e'$ also lies in $\dgd$, or else $e' \in \ves$, and
there is a factorization $v=v_1a'v_2$ so that
$e'$ is the directed edge along $p$ corresponding 
to the label $a' \in A$.
In the latter case, if we denote
the initial vertex of $e'$ by $g'$, then
the prefix rewriting sequence
from $y_{g'}a'v_2$ to its irreducible form
is a (proper) subsequence of the prefix
rewriting of $y_ga$.  
That is, if we define a function
$prl:\ves \ra \N$ by $prl(e):=prl(y_ga)$
whenever $e$ is an edge with initial vertex $g$ and
label $a$, we have $prl(e')<prl(e)$.
Hence the ordering $<_c$ corresponding to our
function $c:\ves \rightarrow A^*$ satisfies 
the property that $e'<_c e$ implies $prl(e')<prl(e)$,
and the well-ordering property on $\N$ implies that
$<_c$ is a well-founded strict partial ordering.
Thus (S2) holds as well.

The image set $c(\ves)$ is the set
of words $\hr=\{\tilde u^{-1}v \mid \exists a \in A$
with $\tilde u a \ra v$ in $R\}$.  Thus 
Property (S3) 
follows from finiteness of the set $R$
of rules in the rewriting system.
We now have a tuple $(\cc,c)$ of data satisfying
properties (S1-S3) of Definition~\ref{def:fstkg}, 
i.e., a \fstkg.
The \stkg\ presentation in this case is the
symmetrized presentation associated to the
rewriting system.
%Since the set $\cc=\{y_g \mid g \in G\}$ 
%is the set of irreducible words with respect to the
%rewriting system, this set is closed under taking
%prefixes.

To determine whether a tuple $(w,a,x)$ lies in the 
associated set 
$\alg$, we begin by computing the normal forms
$y_w$ and $y_{wa}$ from $w$ and $wa$, using the
rewriting rules of our finite system.
Then $(w,a,x) \in \alg$ if and only if
either at least one of the words 
$y_wa$ and $y_{wa}a^{-1}$ is irreducible 
and $a=x$, or
else both of the words $y_wa$ and $y_{wa}a^{-1}$ are 
reducible and there exist both a factorization
$y_w=z\tilde u$ for some
$z \in A^*$ and a rule $\tilde ua \ra v$ in $R$ such that
$x=\tilde u^{-1}v$.  Since there are only
finite many rules in $R$ to check for such a
decomposition of $y_w$,
it follows that the set $\alg$ is 
also computable, and
so this \stkg\  is algorithmic.
\end{proof}

Theorem~\ref{thm:astktf} now shows that any group
with a finite complete rewriting system admits
intrinsic and extrinsic \tfs\ with respect to a
recursive function.
By relaxing the bounds on \tfs\  further, we can write 
bounds on 
%both \tfs\ and diametric 
filling inequalities in terms
of another important function in the study of rewriting systems.

\begin{definition}
The {\em string growth complexity} function $\gamma:\N \ra \N$
associated to a finite complete rewriting system $(A,R)$
is defined by 
\[ \gamma(n) := \max \{l(x) \mid \exists~w \in A^* \text{ with }
l(w) \le n \text { and } w \ras x\} \]
\end{definition}

This function $\gamma$ is an upper bound for the
intrinsic (and hence also extrinsic) diameter function
of the group $G$ presented by the rewriting system.  
\begin{comment}
Let $w$ be any word over $A$ representing the identity
$\ep$ of $G$ with $l(w) \le n$.  Write $w=a_1 \cdots a_m$
with each $a_i \in A$.
Each vertex $v$
in the van Kampen diagram $\dd_w$ lies in a subdiagram
$\dd_e$ with boundary word 
$y_{a_1 \cdots a_{i-1}} a_i y_{a_{i+1} \cdots a_m}$.
Moreover, there is a path from $*$ to
$v$ in this subdiagram labeled
by a prefix of a word $x$ such that 
$y_{a_1 \cdots a_{i-1}} a_i \prs x$.  Now
$w \prs y_{a_1 \cdots a_{i-1}} a_i \cdots a_m \prs x a_{i+1} \cdots a_m$, 
as well.  Then 
\[
d_X(\ep,\pi_{\dd_w}(v)) \le d_{\dd_w}(*,v) \le l(x a_{i+1} \cdots a_m) 
\le \gamma(n)~,
\]
which gives the required intrinsic and extrinsic \dms.
\end{comment}
In the following, we show that $G$ also satisfies \tfs\ with
respect to a function Lipschitz equivalent to $\gamma$.

\begin{corollary}\label{cor:rsgrowth}
Let $G$ be a group with a finite complete
rewriting system. 
Let $\gamma$ be the
string growth complexity function 
for the associated
minimal system and let $\maxr$ denote the length of 
the longest rewriting rule for this system.
%in the associated minimal finite complete
%rewriting system,
Then $G$ 
%is \astkbl\ and 
satisfies both intrinsic and extrinsic \tfs\ 
for the recursive function 
$n \mapsto \gamma(\lceil n \rceil +\maxr+2)+1$.
\end{corollary}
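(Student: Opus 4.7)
The plan is to obtain the tame filling inequalities via Theorems~\ref{thm:crsrecit} and~\ref{thm:rnftame}, and then to show that each of the four functions $\kti,\kte,\kxi,\kxe$ appearing in $\mui$ and $\mue$ is controlled by the string growth complexity $\gamma$. For the normal-form pieces: any $g\in B(m)$ has a representative word $w$ with $l(w)\le m$, and the rewriting $w\ras y_g$ exhibits $y_g$ as an $\ras$-descendant of a word of length $\le m$, so $l(y_g)\le\gamma(m)$; since any prefix $x$ of $y_g$ satisfies $d_X(\ep,x)\le l(x)\le l(y_g)$, this immediately gives $\kti(m),\kte(m)\le\gamma(m)$.

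The crux is bounding $\kxi(m)$ and $\kxe(m)$. I intend to establish the following key claim: for any recursive edge $e=e_{g,a}$ with $g\in B(m)$ and any vertex $v$ of the recursive edge diagram $\dd_e$ constructed in Section~\ref{sec:rnf}, the normal form $y_h$ of $h:=\pi_{\dd_e}(v)$ occurs as a prefix of some intermediate word in the prefix rewriting $y_g a\prs y_{ga}$. Granting this, each such intermediate word also lies in the $\ras$-descendant set of $wa$, where $w$ is a shortest word for $g$ (so $l(wa)\le m+1$), and hence has length at most $\gamma(m+1)$; thus $l(y_h)\le\gamma(m+1)$. Combined with Remark~\ref{rmk:anfhasnf} (which gives $d_{\dd_e}(*,v)\le l(y_h)$ for the intrinsic bound) and the trivial estimate $d_X(\ep,h)\le l(y_h)$ (for the extrinsic bound), this yields $\kxi(m),\kxe(m)\le\gamma(m+1)$.

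I would prove the key claim by Noetherian induction on the \stkg\ ordering $<_c$. The base case $e\in\dgd$ is direct: $\dd_e$ is a tree whose vertex normal forms are prefixes of $y_g$ or $y_{ga}$, both of which occur as intermediate words of the (possibly trivial) prefix rewriting of $y_g a$. For the inductive case, using the factorization $y_g=w\tilde u$ and the rule $\tilde u a\ra v$ that produce $c(e)=\tilde u^{-1}v$, the top-level step $y_g a\ra wv$ supplies the intermediate word $wv$; the recursive children $e_i$ on the $c(e)$-path inherit prefix rewritings $y_{g_{i-1}}v_i\prs y_{g_i}$, to which the inductive hypothesis applies. I anticipate that the main technical work will be verifying that as the leftmost reducible prefix of the top-level sequence is progressively reduced, the normal form $y_{g_i}$ of each intermediate vertex on the $c(e)$-path does appear as the longest irreducible prefix of some intermediate word in the top-level rewriting, so that the children's prefix rewritings embed coherently into the top-level one.

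Finally, using monotonicity of $\gamma$ together with the elementary bound $\gamma(p)\ge p$, each of the three terms in the max defining $\mui(n)$ is at most $\gamma(\lceil n\rceil+\maxr+2)+1$; the identical estimate applies to $\mue$, and Theorem~\ref{thm:rnftame} then delivers both intrinsic and extrinsic \tfs\ for $n\mapsto\gamma(\lceil n\rceil+\maxr+2)+1$. This function is recursive because $\gamma$ is: termination of the CRS together with finiteness of the one-step rewriting sets forces the $\ras$-descendant set of each word to be finite, so $\gamma(n)$ can be computed by enumerating all words of length $\le n$, breadth-first exploring their descendants, and recording the maximum length encountered.
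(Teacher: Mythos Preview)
Your proposal is correct and follows essentially the same route as the paper: both bound $\kti,\kte$ directly by $\gamma$, and both establish the key claim that every vertex of the recursive edge diagram $\dd_e$ has normal form appearing as an (irreducible) prefix of some intermediate word in the prefix rewriting $y_g a \prs y_{ga}$, whence $\kxi,\kxe$ are bounded by $\gamma(\cdot+1)$. The paper invokes the machinery of $k(w,a)$ and $k_r'$ from the proof of Theorem~\ref{thm:astktf} and asserts the key claim from the construction, whereas you propose to prove it by explicit Noetherian induction on $<_c$; this is a difference in presentation only, and your final assembly of the three terms of $\mui$ into the bound $\gamma(\lceil n\rceil+\maxr+2)+1$ matches the paper's exactly.
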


\begin{proof}
Let $(A,R)$ be a minimal finite complete rewriting system
for $G$ such that $A$ is inverse-closed.
%, and let
%$\pp$ be the symmetrized presentation of $G$ associated to
%this system, with 
%Cayley complex $X$.
Let $(\cc,c)$ be the \stkg\ for $G$ constructed
in the proof of Theorem~\ref{thm:crsrecit}, and
let $X$ be the Cayley complex of the \stkg\ presentation $\pp$.
Let $\cle=\{\dd_e,\tht_e) \mid e \in E(X)\}$
be the associated \rcnf (where we note that the 
choice of subword $\tc_e$ of $c(e)$ for each $e \in \ves$,
used in the construction of $\tht_e$,
is given in the proof of Theorem~\ref{thm:crsrecit}).
%Let $\cld=\{(\dd_w,\ps_w)\}$ be the \cfl\ 
%over $\pp$ induced by this \rcnf.
For the rest of this proof, we rely heavily
on the result and
notation developed in the proof of Theorem~\ref{thm:astktf}
to obtain the tameness bounds for these \ehs.

From that proof, we have 
$\kte(n) \le \kti(n) = \max\{l(y) \mid y \in \cc_n\}$ for all $n$,
where $\cc_n$ is the set of irreducible normal forms obtained by
rewriting words over $A$ of length at most $n$.  Therefore
$\kte(n) \le \kti(n) \le \gamma(n)$.

Also from that earlier proof, we have $\kxe(n) \le \kxi(n) \le k_r'(n)$
for all $n \in \N$.  
Suppose that $w \in A^*$ is a word of length at most $n$, $a \in A$,
and $e=e_{w,a}$ is the directed edge in $X$ from $w$ to $wa$
labeled by $a$.
In this case we analyze the van Kampen diagram $\dd_e$ more carefully.
This diagram is built by successively
applying prefix rewritings to the word $y_w a$
and/or by applying free reductions (which must also result
from prefix rewritings).
Hence for every vertex $v$ in the diagram $\dd_e$,
there is a path from the basepoint $*$ to $v$ labeled by an
irreducible prefix $y$ of a word $x \in A^*$ such that 
$y_w a \prs x$,
and this word $y$ is the element of the set $L_e$ corresponding
to the vertex $v$.
Then the maximum $k(w,a)$ of the lengths of
the elements of $L_e$ is bounded above
by $\max \{l(y) \mid y$ is a prefix of $x$ and $y_w a \prs x\}$.
Since the length of a prefix of a word $x$ is at most
$l(x)$, we have
$k(w,a) \le \max \{l(x) \mid y_w a \prs x\}$.

Plugging this into the formula for $k_r'$, we obtain
\begin{eqnarray*}
k_r'(n) &=& \max\{k(w,a) \mid w \in \cup_{i=0}^n A^i, a \in A, e_{w,a} \in \ves\} \\
& \le &   \max \{l(x) \mid \exists~ w \in \cup_{i=0}^n A^i, a \in A, y_w a \prs x\}.
\end{eqnarray*}
Now for each word $w$ of length at most $n$ and each $a \in A$, we have 
$wa \prs y_wa$, and so

\hspace{-0.1in} $k_r'(n) \le \max \{l(x) \mid \exists~ w \in \cup_{i=0}^n A^i, 
   a \in A$ with 
  $w a \prs x\} \le  \gamma(n+1)$.

Putting these inequalities together, we obtain $\mue(n) \le \mui(n)$ and
%\begin{eqnarray*}
%\mui(n) &= &\max \{\kti(\lceil n \rceil +  1) + 1, 
%         n+1, \kxi(\lceil n \rceil + \maxr+1)\} \\
%  &\le& \gamma(\lceil n \rceil +\maxr+2)+1.
%\end{eqnarray*}

\hspace{-.1in} $\mui(n) = \max \{\kti(\lceil n \rceil +  1) + 1, 
         n+1, \kxi(\lceil n \rceil + \maxr+1)\} $

\hspace{.3in} $ \le \gamma(\lceil n \rceil +\maxr+2)+1.$
\end{proof}

\begin{remark}
{\em We note that every instance of rewriting in the proofs
in this Section
was a prefix rewriting, and so $G$ also satisfies \tfs\ 
with $\gamma$ replaced by the potentially
smaller} prefix rewriting string growth complexity
{\em function $\gamma_p(n)= \max \{l(x) \mid \exists~w \in A^* \text{ with }
l(w) \le n \text { and } w \prs x\} \le \gamma(n)$.}
\end{remark}

%\vspace{.1in}

%%%%%%%%%%%%%%%%%%%%%%%%%%%%%%%%%%%%%%%%%%%%%%%%%%%%%%%%%%%%%%%%%%%%%%%%%%%%
%%%%%%%%%%%%%%%%%%%%%%%%%%%%%%%%%%%%%%%%%%%%%%%%%%%%%%%%%%%%%%%%%%%%%%%%%%%%
%%%%%%%%%%%%%%%%%%%%%%%%%%%%%%%%%%%%%%%%%%%%%%%%%%%%%%%%%%%%%%%%%%%%%%%%%%%%

\subsection{Thompson's group $F$}\label{subsec:f}

%%%%%%%%%%%%%%%%%%%%%%%%%%%%%%%%%%%%%%%%%%%%%%%%%%%%%%%%%%%%%%%%%%%%%%%%%%%%
%%%%%%%%%%%%%%%%%%%%%%%%%%%%%%%%%%%%%%%%%%%%%%%%%%%%%%%%%%%%%%%%%%%%%%%%%%%%

$~$

\vspace{.1in}

Thompson's group 
\[F=\langle x_0,x_1 \mid [x_0x_1^{-1},x_0^{-1}x_1x_0],
   [x_0x_1^{-1},x_0^{-2}x_1x_0^2] \rangle\]
is the group of orientation-preserving piecewise linear
homeomorphisms of the unit interval [0,1], satisfying that
each linear piece has a slope of the form $2^i$ for some
$i \in \Z$, and all breakpoints occur in the 2-adics.
In \cite{chst}, Cleary, Hermiller, Stein, and Taback
show that Thompson's group with the generating set
$A=\{x_0^{\pm 1},x_1^{\pm 1}\}$ is \fstkbl, 
with \stkg\ presentation given by 
the symmetrization of the presentation above. 
Moreover, in~\cite[Definition~4.3]{chst} they give an 
algorithm for computing the \stkg\ map, which 
can be shown to yield an \astkg\ for $F$.

Although we will not repeat their proof here, 
we describe 
the normal form set $\cc$ associated to the \stkg\ 
constructed for Thompson's group in~\cite{chst}
in order to
discuss its formal language theoretic properties.
%In the construction of the 1-combing for Thompson's group
%$F$ in \cite{chst}, a \pfsg\  is constructed; we repeat the
%description of the normal form set $\cc$ here. 
Given a word $w$ over the generating set 
$A=\{x_0^{\pm 1},x_1^{\pm 1}\}$, denote the number of occurrences in $w$ of the letter
$x_0$ minus the number of occurrences in $w$ of the letter $x_0^{-1}$ by $expsum_{x_0}(w)$;
that is, the exponent sum for $x_0$.
The authors of that paper show (\cite[Observation~3.6(1)]{chst}) that the set 

\smallskip

$\cc:=\{w \in A^* \mid \forall \eta \in \{\pm 1\}$, the words
$x_0^\eta x_0^{-\eta}$, $ x_1^\eta x_1^{-\eta}$, and $x_0^2x_1^\eta$ 
are  not subwords of $w$, 

\hspace{1in} and $\forall $ prefixes $ w' $ of $ w,
expsum_{x_0}(w') \le 0\}$,

\smallskip

%\begin{eqnarray*}
%\cc&:=&\{w \in A^* \mid \forall \eta \in \{\pm 1\}, \mbox{ the words }
%x_0^\eta x_0^{-\eta},  x_1^\eta x_1^{-\eta}, \mbox{ and } x_0^2x_1^\eta
%\mbox{ are not subwords of } w, \\
%& & \hspace{0.2in} \mbox{ and } 
%\forall \mbox{ prefixes } w' \mbox{ of } w,
%expsum_{x_0}(w') \le 0\},\\
%\end{eqnarray*}

\noindent is a set of normal forms for $F$.  
%This set $\cc$ is prefix-closed.  
Moreover, each of these words
labels a (6,0)-quasi-geodesic path in the
Cayley complex $X$~\cite[Theorem~3.7]{chst}.

This set $\cc$ is the intersection of the regular language
$A^* \setminus \cup_{u \in U}A^*uA^*$, where 
$U:=\{x_0x_0^{-1},x_0^{-1}x_0,x_1x_1^{-1},x_1^{-1}x_1,x_0^2x_1,x_0^2x_1^{-1}\}$,
with the language $L:=\{w \in A^* \mid \forall $ prefixes $ w' $ of $ w,
expsum_{x_0}(w') \le 0\}$.  
We refer the reader to the text of Hopcroft and Ullman~\cite{hu}
for definitions and results on context-free and regular languages we now
use to analyze the set $L$.
The language $L$ can be recognized by a deterministic push-down automaton (PDA)
which pushes an $x_0^{-1}$ onto its stack whenever
an $x_0^{-1}$ is read, and pops an $x_0^{-1}$ off of its stack whenever
an $x_0$ is read.  When $x_1^{\pm 1}$ is read, the PDA
does nothing to the stack, and does not change its state.
The PDA remains in its initial state unless an $x_0$ is read 
when the only symbol on 
the stack is the stack start symbol $Z_0$, in which case
the PDA transitions to a fail state (at which it must then
remain upon reading the remainder of the input word).  
Ultimately the PDA accepts
a word whenever its final state is its initial state.
Consequently, $L$ is a deterministic context-free language.  
Since the intersection of a regular language with a deterministic
context-free
language is deterministic context-free, 
the set $\cc$ is also a deterministic context-free language.

The authors of~\cite{chst} construct the
\stkg\ of $F$ as a stepping stone to showing
that $F$ with this presentation also
admits a radial \tci\ with
respect to a linear function.
We note that although the definition of \dia\ 1-combing
is not included in that paper, and the coarse distance
definition differs slightly, the constructions
of 1-combings in the proofs are \dia\  and
admit Lipschitz equivalent radial \tci\ functions.
Hence by Proposition~\ref{prop:etistc}, this group
satisfies a linear extrinsic \tfi.
%; i.e., $F$
%lies in the class $ET_f$ for a linear function $f$.

%\coment{Refer here to my comment above that the proof
%of Proposition~\ref{prop:etistc} implies that $\rho$-TC iff
%equiv to $\rho$-TC with shortlex 0-combing.  Then add
%a note here, too, that Thompson's group
%have a linear TC with shortlex 0-combing. Or just skip this?}

Let $\cle$
%$=\{(\dd_e,\tht_e)\}$ 
be the \rcnf\ associated to the \stkg\ 
%$(\cc,\ves,<,c)$ 
in ~\cite{chst}, and 
let ${\mathcal D}=\{(\dd_w,\ph_w) \mid w \in A^*, w=_F \ep\}$ 
be the 
%set of van Kampen diagrams and van Kampen homotopies
\cfl\ 
induced by $\cle$ by the seashell procedure.
%The proof of Proposition~\ref{prop:etistc} shows that
As noted above, 
the van Kampen homotopies in the collection ${\mathcal D}$ are
extrinsically $f$-tame for a linear function $f$.
A consequence of Remark~\ref{rmk:anfhasnf} and the
seashell construction is that for 
each word $w \in A^*$ with $w=_F \ep$ and for each vertex
$v$ in $\dd_w$, there is a path in $\dd_w$ from the
basepoint $*$ to the vertex $v$ labeled by the (6,0)-quasi-geodesic
normal form in $\cc$ representing $\pi_{\dd_w}(v)$. 
Then we have 
$d_{\dd_w}(*,v)
\le 6d_X(\ep,\pi_{\dd_w}(v))$.
Let $\tj:\nn \ra \nn$ be the (linear) function
defined by $\tj(n)=6 \lceil n \rceil + 1$.
Theorem~\ref{thm:itversuset} then shows that 
Thompson's group $F$ also satisfies a linear 
intrinsic \tfi, for the linear function $\tj \circ f$.

On the other hand, we note that Cleary and Taback~\cite{clearytaback}
have shown that Thompson's group $F$ is not almost convex
(in fact, Belk and Bux~\cite{belkbux}
have shown that $F$ is
not even minimally almost convex).  Combining this
with Theorem~\ref{thm:aceti} below, Thompson's group $F$ cannot
satisfy an intrinsic or extrinsic 
\tfi\ for the identity function.

%%%%%%%%%%%%%%%%%%%%%%%%%%%%%%%%%%%%%%%%%%%%%%%%%%%%%%%%%%%%%%%%%%%%%%%%%%%%
%%%%%%%%%%%%%%%%%%%%%%%%%%%%%%%%%%%%%%%%%%%%%%%%%%%%%%%%%%%%%%%%%%%%%%%%%%%%
%%%%%%%%%%%%%%%%%%%%%%%%%%%%%%%%%%%%%%%%%%%%%%%%%%%%%%%%%%%%%%%%%%%%%%%%%%%%

\subsection{Iterated Baumslag-Solitar groups}\label{subsec:iteratedbs}

%%%%%%%%%%%%%%%%%%%%%%%%%%%%%%%%%%%%%%%%%%%%%%%%%%%%%%%%%%%%%%%%%%%%%%%%%%%%
%%%%%%%%%%%%%%%%%%%%%%%%%%%%%%%%%%%%%%%%%%%%%%%%%%%%%%%%%%%%%%%%%%%%%%%%%%%%

$~$

\vspace{.1in}

The iterated
Baumslag-Solitar group
$$G_k=\langle a_0,a_1,...,a_k \mid a_i^{a_{i+1}}=a_i^2; 0 \le i \le k-1\rangle$$
admits a finite complete rewriting
system for each $k \ge 1$ (first described by Gersten;
see \cite{hmeiermeastame} for details),
and so Theorem~\ref{thm:crsrecit}
shows that this group is \rstkbl.

Gersten~\cite[Section~6]{gerstenexpid} showed that $G_k$
has an isoperimetric function that grows at least as fast as a tower 
of exponentials
$$
E_k(n) := \underbrace{2^{2^{.^{.^{.^{2^n}}}}}}_{\hbox{k times}}~.
$$
It follows from his proof that the (minimal) extrinsic diameter function 
for this group is at least $O(E_{k-1}(n))$.
%~\cite{hmeiermeastame}.  
Hence this is also a lower bound for the (minimal) intrinsic diameter
function for this group.  
Then by Proposition~\ref{prop:itimpliesid}, $G_k$ 
cannot satisfy an intrinsic or extrinsic \tfi\ for the function 
$E_{k-2}$.  (In the extrinsic case, this was shown in the
context of tame combings in~\cite{hmeiermeastame}.)
Combining this with Corollary~\ref{cor:rsgrowth}, for $k \ge 2$
the  group $G_k$ is an example of a \rstkbl\ group which admits intrinsic
and extrinsic recursive \tfs\ but which cannot satisfy a \tfi\ 
for $E_{k-2}$.  
%Moreover, for every minimal finite complete
%rewriting system for the group $G_k$ over any generating set,
%the associated string growth growth
%complexity function  must also grow faster than any function
%Lipschitz equivalent to $E_{k-2}$.

%\coment{Want to add to this example that the Gersten
%group $G_k$ has even better yet an iterated exponential \tfi.  Need to write up
%the applications and see how to do this first.}

%%%%%%%%%%%%%%%%%%%%%%%%%%%%%%%%%%%%%%%%%%%%%%%%%%%%%%%%%%%%%%%%%%%%%%%%%%%%
%%%%%%%%%%%%%%%%%%%%%%%%%%%%%%%%%%%%%%%%%%%%%%%%%%%%%%%%%%%%%%%%%%%%%%%%%%%%
%%%%%%%%%%%%%%%%%%%%%%%%%%%%%%%%%%%%%%%%%%%%%%%%%%%%%%%%%%%%%%%%%%%%%%%%%%%%

\subsection{Solvable Baumslag-Solitar groups}\label{subsec:bs}

%%%%%%%%%%%%%%%%%%%%%%%%%%%%%%%%%%%%%%%%%%%%%%%%%%%%%%%%%%%%%%%%%%%%%%%%%%%%
%%%%%%%%%%%%%%%%%%%%%%%%%%%%%%%%%%%%%%%%%%%%%%%%%%%%%%%%%%%%%%%%%%%%%%%%%%%%

$~$

\vspace{.1in}

The solvable Baumslag-Solitar groups are presented by  
$G=BS(1,p)=\langle a,t \mid tat^{-1}=a^p \rangle$ with $p \in \Z$.
%These groups also admit finite complete rewriting systems,
%and so are \astkbl.  The string growth
%complexity functions associated to the finite systems
%known to the authors of this paper are at least
%exponential.  However,
In~\cite{chst} Cleary, Hermiller, Stein, and Taback 
show that for $p \ge 3$, the groups $BS(1,p)$ admit
a linear radial \tci, 
and hence (from Proposition~\ref{prop:etistc})
a linear extrinsic \tfi.

We note that the \cfl\ in their proof is induced by
the \rcnf\ associated to a \rstkg, which we 
describe here in order to obtain an intrinsic \tfi\ for
these groups.
The set of normal forms over the generating set
$A=\{a,a^{-1},t,t^{-1}\}$ is
$$\cc:=\{t^{-i}a^mt^k \mid i,k \in \N \cup \{0\}, m \in \Z,
\text{ and either } p \not| m \text{ or } 0 \in \{i,k\}\}.$$
The recursive edges in $\ves = \dire \setminus \dgd$
are the directed edges of the form $e_{w,b}$
with initial point $w$ and label $b \in A$
satisfying one of the following:
\begin{enumerate}
\item $w=t^{-i}a^m$ and $b=t^\eta$ with $m \neq 0$, $\eta \in \{\pm 1\}$,
and $-i+\eta \le 0$, or
\item $w=t^{-i}a^mt^k$ and $b=a^\eta$ with $k>0$ and $\eta \in \{\pm 1\}$.
\end{enumerate}
%, where the pair
%$(w,b)$ is either of the form $(t^{-i}a^m,t^{\eta})$ with $m \neq 0$
%and $-i+\eta \le 0$,
%or else $(t^{-i}a^mt^k,a^{\eta})$ with $k>0$,
%where in both cases $\eta \in \{\pm 1\}$.
In case (1), we define
%\[
%\begin{cases}
%a^{-p}ta & \text{if} m>0 \text{ and } \eta=1 \\ 
%a^{p}t^{-1}a & \text{if} m<0 \text{ and } \eta=1 \\ 
%a^{-p}ta & \text{if} m>0 \text{ and } \eta=1 \\ 
%a^{-p}ta & \text{if} m>0 \text{ and } \eta=1 \\ 
%\end{cases}
%\]
$c(e_{t^{-i}a^m,t^{\eta}}):=(a^{-\nu p}ta^{\nu})^\eta$, 
where $\nu:=\frac{m}{|m|}$ is 1 if $m>0$ and $-1$ if $m<0$.
In case (2) we define
$c(e_{t^{-i}a^mt^k,a^{\eta}}):=t^{-1}a^{\eta p}t$.

Properties (S1) and (S3) of the definition of \stkg\ 
follow directly.  To show that the pair \stkg\ map $c$
also satisfies property (S2), we first briefly describe the 
Cayley complex $X$ for the finite presentation above; see
for example \cite[Section~7.4]{echlpt} for more details.
The Cayley complex $X$ is homeomorphic to the product
$\R \times T$ of the real line with a regular tree $T$,
and there are canonical projections 
$\Pi_\R:X \ra \R$ and $\Pi_T:X \ra T$.
The projection $\Pi_T$ takes each edge labeled by 
an $a^{\pm 1}$ to a vertex of $T$.  Each edge of $T$ is
the image of infinitely many $t$ edges of $X^1$, with
consistent orientation, and so we may consider the
edges of $T$ to be oriented and labeled by $t$, as well.
For the normal form $y_g=t^{-i}a^mt^k \in \cc$ of an element
$g \in G$, the projection onto $T$ of the path in $X^1$ starting at
$\ep$ and labeled by $y_g$ is the unique geodesic
path, labeled by $t^{-i}t^k$,
in the tree $T$ from $\Pi_T(\ep)$ to $\Pi_T(g)$. 
For any directed edge $e$ in 
$\ves$ in case (2) above, 
there are
$p+1$ 2-cells in the Cayley complex $X$ 
that contain $e$ in their boundary,
and the path $c(e)$ starting from 
the initial vertex of $e$ is the portion 
of the boundary, disjoint from $e$, 
of the only one of those 2-cells $\sigma$ that satisfies 
$d_T(\Pi_T(\ep),\Pi_T(q)) \le d_T(\Pi_T(\ep),\Pi_T(e))$
for all points $q \in \sigma$, where $d_T$
is the path metric in $T$.  For any edge $e'$ that
lies both in this $c(e)$ path and in $\ves$, then
$e'$ is again a recursive edge of type (2), and we
have $d_T(\Pi_T(\ep),\Pi_T(e'))<d_T(\Pi_T(\ep),\Pi_T(e))$.  
Thus the well-ordering on 
$\N$ applies, to show that there are
at most finitely many $e'' \in \ves$ with $e'' <_c e$
in case (2).

The other projection map $\Pi_\R$ takes each vertex $t^{-i}a^mt^k$
to the real number $p^{-i}m$, and so takes each edge
labeled by $t^{\pm 1}$ to a single real number, and takes
each edge labeled $a^{\pm 1}$ to an interval in $\R$.
For an edge $e \in \ves$ in case (1) above, there are
exactly two 2-cells in $X$ containing $e$, and
the path $c(e)$ starting at the initial vertex $w=t^{-i}a^m$ of $e$
travels around the boundary of the one of these
two cells (except for the edge $e$)
whose image, under the projection $\Pi_\R$,
is closest to $0$.  The only possibly recursive edge 
$e'$ in this $c(e)$
path must also have type (1), and moreover
the initial vertex of $e'$ is $w'=t^{-i}a^{m-\nu}$ and
satisfies $|\Pi_\R(w')|=|\Pi_\R(w)|-p^{-i}$.
Then in case (1) also there are only finitely many recursive edges
that are $<_c e$, completing the proof of property (S2).

Therefore the tuple $(\cc,c)$ is a \stkg, 
and the symmetrization of the
presentation above is the \stkg\ presentation.
The canonical diagrammatic 1-combing
built from the associated \rcnf\ is the 1-combing constructed
in~\cite{chst}.

Let ${\mathcal D}=\{(\dd_w,\ph_w) \mid w \in A^*, w=_F \ep\}$ 
be the 
%set of van Kampen diagrams and van Kampen homotopies
\cfl\ 
induced by this \rcnf\ via the seashell procedure.
From Remark~\ref{rmk:anfhasnf}, we know that
for each vertex $v$ of a van Kampen diagram
$\dd_w$ in this collection, there is a path in 
$\dd_w$ from $*$ to $v$ labeled by the normal form
%$t^{-i}a^mt^k$ 
of the element $\pi_{\dd_w}(v)$ of $BS(1,p)$.
The normal form $y_g$ of $g \in G$ can be obtained from
a geodesic representative by applying (the infinite set of)
rewriting rules of the form $ta^\eta \ra a^{\eta p}t$ and
$a^\eta t^{-1} \ra t^{-1}a^{\eta p}$ for $\eta=\pm 1$
together with $t^{-1}a^{pm}t \ra a^m$ for $m \in \Z$
and free reductions.  
%the geometry of the Cayley complex for this group
%(see, for example,~\cite{echlpt} for a discussion), we have 
%$d_X(\ep,\pi_{\dd_w}(v)) \ge i+\log_p(m)+k \ge \log_p(i+m+k)$. 
Then $d_{\dd_w}(*,v) \le l(y_g)
%\le i+m+k 
\le j(d_X(\ep,\pi_{\dd_w}(v))$
for the function $j:\N \ra \N$
given by $j(n)=p^n$.  Theorem~\ref{thm:itversuset} 
and the linear extrinsic \tfi\ result above now
apply, to show that the group $BS(1,p)$ with $p \ge 3$ also satisfies 
an intrinsic \tfi\ with respect to a function $\nn \ra \nn$
that is Lipschitz equivalent to the exponential function 
$n \mapsto p^n$ with base $p$.

%\eject

%%%%%%%%%%%%%%%%%%%%%%%%%%%%%%%%%%%%%%%%%%%%%%%%%%%%%%%%%%%%%%%%%%%%%%%%%%%%
%%%%%%%%%%%%%%%%%%%%%%%%%%%%%%%%%%%%%%%%%%%%%%%%%%%%%%%%%%%%%%%%%%%%%%%%%%%%
%%%%%%%%%%%%%%%%%%%%%%%%%%%%%%%%%%%%%%%%%%%%%%%%%%%%%%%%%%%%%%%%%%%%%%%%%%%%

\subsection{Almost convex groups}\label{subsec:ac}

%%%%%%%%%%%%%%%%%%%%%%%%%%%%%%%%%%%%%%%%%%%%%%%%%%%%%%%%%%%%%%%%%%%%%%%%%%%%
%%%%%%%%%%%%%%%%%%%%%%%%%%%%%%%%%%%%%%%%%%%%%%%%%%%%%%%%%%%%%%%%%%%%%%%%%%%%

$~$

\vspace{.1in}

One of the original motivations for the definition
of a radial tame combing inequality in~\cite{hmeiermeastame} 
was to capture Cannon's~\cite{cannon} notion of
almost convexity in a quasi-isometry invariant
property.  Let $G$ be a group with an
inverse-closed generating set $A$, and
%where as usual we assume that $A$ does not contain
%an element that represents $\ep$ in $G$.
let $d_\ga$ be the path metric on the associated
Cayley graph $\ga$.  For $n \in \N$, 
define the sphere $S(n)$ of radius $n$
to be the set of points in $\ga$ a distance
exactly $n$ from the vertex labeled by the
identity $\ep$.  Recall that the ball $B(n)$ of radius $n$
is the set of points in $\ga$ whose path metric distance
to $\ep$ is less than or equal to $n$.

\begin{definition}\label{def:ac}
A group $G$ is {\em almost convex} with respect
to the finite symmetric
generating set $A$ if there is a constant $k$
such that
for all $n \in \N$ and for all $g,h$ in the
sphere $S(n)$ satisfying
$d_\ga(g,h) \leq 2$
(in the corresponding Cayley
graph), there is a path inside 
the ball $B(n)$ from
$g$ to $h$ of length no more than $k$.
\end{definition}

Cannon~\cite{cannon} showed that every group 
satisfying an almost convexity condition
over a finite generating set is also finitely presented.
Thiel~\cite{thiel} showed that almost convexity
is a property that depends upon the finite
generating set used.  
%On the other hand, 
%Hermiller and Meier~\cite{hmeiermeastame} showed that
%every group that is almost convex on some
%finite generating set also
%lies in the quasi-isometry invariant 
%class of groups admitting a radial \tci\ 
%with respect to a linear function $\rho$.  Moreover, they
%showed that 
%a group $G$ with generating set $A$ is almost convex
%if and only if the there is a finite set $R$ of
%defining relations for $G$ over $A$ such that
%the pair $(G,\pp)$ satisfies a radial \tci\ 
%with respect to the identity function 
%$f:\nn \ra \nn$ (i.e. $f(n)=n$ for all $n$).

In Theorem~\ref{thm:aceti}, we show that 
a pair $(G,A)$ that is almost convex 
is \afstkbl\ and (applying Theorem~\ref{thm:itisqi}) must also
lie in the quasi-isometry invariant 
class of groups admitting linear intrinsic
and extrinsic \tfs.  Moreover almost convexity
of $(G,A)$ is exactly characterized by 
admitting a finite set $R$ of
defining relations for $G$ over $A$ such that
the pair $(G,\langle A \mid R \rangle)$ satisfies an intrinsic
or extrinsic \tfi\ 
with respect to the identity function 
$\iota:\nn \ra \nn$ (i.e. $\iota(n)=n$ for all $n$).
In the extrinsic case, equivalence of almost
convexity and an extrinsic \cni\ for $\iota$
follows directly from
the equivalence of 
almost convexity with a radial \tci\ for the 
identity shown by Hermiller and Meier
in~\cite[Theorem C]{hmeiermeastame}, 
together with Proposition~\ref{prop:etistc}.
We give some details here which
include a description of the \stkg\  
%$(\cc,\ves,<,c)$ 
involved, 
%to include the intrinsic case
%and to remove ``normal'' from the extrinsic filling inequality, 
and a minor correction to the proof in that
earlier paper.
\begin{comment}
One ingredient in the proof is the notion
of the star of a subcomplex in a CW complex.
Given a subcomplex $D$ of a CW complex $Y$,
the {\em star} $St_Y(D)$ is the collection of all
cells of $Y$ whose closure has nonempty intersection
with $D$.
\end{comment}

\begin{theorem} \label{thm:aceti}
Let $G$ be a group with finite generating set $A$, and
let $\iota: \nn \ra \nn$ denote the identity
function.  The following
are equivalent:
\begin{enumerate}
\item The pair $(G,A)$ is almost convex 
\item There is a finite
presentation $\pp=\langle A \mid R \rangle$ for $G$ that
satisfies an intrinsic \tfi\ with respect to $\iota$.
\item There is a finite
presentation $\pp=\langle A \mid R \rangle$ for $G$ that
satisfies an extrinsic \tfi\ with respect to $\iota$.
\end{enumerate}
Moreover, if any of these hold, then $G$ is \afstkbl\ over $A$.
\end{theorem}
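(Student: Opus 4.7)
The plan is to split the equivalence into two parts. First, (1) $\Leftrightarrow$ (3) reduces to known material: by Proposition~\ref{prop:etistc}, extrinsic \tfs\ correspond (up to Lipschitz equivalence) with radial \tci s, and by Theorem~C of \cite{hmeiermeastame}, almost convexity is characterized by the existence of a radial \tci\ with respect to $\iota$. Second, (1) $\Leftrightarrow$ (2) together with the \afstkbl\ claim requires an explicit stacking construction, which I describe next; this construction also yields an on-the-nose extrinsic $\iota$-\tfi\ directly, sidestepping the Lipschitz slack on that side of the equivalence.

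For (1) $\Rightarrow$ (2) and algorithmic stackability, I would use shortlex normal forms. Fix a total order on $A$ and let $\cc = \{y_g\mid g\in G\}$ be the shortlex normal forms (these exist and are computable since $G$ is finitely presented by Cannon's theorem and has solvable word problem). The degenerate edge set $\dgd$ is then the shortlex spanning tree. For each recursive edge $e=e_{g,a}$ with $g\in S(n)$ and $ga\in S(n+\eta)$ ($\eta\in\{-1,0,+1\}$), I define $c(e)$ via almost convexity: if $\eta=+1$, factor $y_{ga}=y'b$ so that the element with normal form $y'$ lies in $S(n)$ at Cayley distance $\le 2$ from $g$, apply almost convexity inside $B(n)$ to produce a path of length $\le k$, and append the degenerate edge labeled $b$; the cases $\eta=-1$ (symmetric) and $\eta=0$ (direct) are similar. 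In every case $|c(e)|\le k+1$ and $c(e)$ stays in $B(\max(|y_g|,|y_{ga}|))$, so the \stkg\ relation set $R_c$ is finite and the map $c$ is algorithmic. For the well-founded ordering $<_c$, I would order the recursive edges lexicographically by edge level $\max(|y_g|,|y_{ga}|)$ refined by a shortlex-based tiebreaker on endpoints.

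With the stacking in hand, the intrinsic $\iota$-\tfi\ follows from the recursive combed filling machinery of Section~\ref{sec:rnf}, sharpening the generic bound of Theorem~\ref{thm:rnftame}. Because the shortlex normal forms are geodesic, the argument used in the proof of Proposition~\ref{prop:htpydomain}(3)$\Rightarrow$(1) shows that $d_{\dd_e}(*,v) = d_X(\ep,\pi_{\dd_e}(v)) \le \max(|y_g|,|y_{ga}|)$ for every vertex $v$ of every edge-diagram $\dd_e$; a direct case analysis of the edge homotopies $\tht_e$ constructed in Section~\ref{sec:rnf} then verifies that each $\tht_e$ ascends monotonically in $\dd_e$-height, giving an intrinsic (and, by the same analysis, extrinsic) $\iota$-\cni, which lifts to a \cfl\ via Proposition~\ref{prop:htpydomain}. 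Conversely, (2)$\Rightarrow$(1) can be established by converting an intrinsic $\iota$-\tfi\ to an intrinsic geodesic \cni\ via Proposition~\ref{prop:htpydomain} and, for $g,h\in S(n)$ with $d_\ga(g,h)=2$ through $S(n+1)$, seashelling the two edge-diagrams along the length-two connecting word $u$ and using $\iota$-tameness of the combined homotopy to extract a bounded-length $B(n)$-path between $g$ and $h$.

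The main obstacle I expect is arranging the within-level tiebreaker of $<_c$, particularly in the $\eta=0$ case, so that every recursive edge in $c(e)$ is strictly smaller than $e$: a naive shortlex-min tiebreaker fails if the almost convex path visits vertices whose shortlex rankings sit between $y_g$ and $y_{ga}$. I would resolve this by either (i) first routing $c(e)$ through a degenerate descent to $S(n-1)$ before applying almost convexity (reducing the level and avoiding same-level recursion), or (ii) choosing $c(e)$ canonically as the shortlex-minimal bounded-length path inside $B(\max(|y_g|,|y_{ga}|))$, and verifying the compatibility with the recursive combed filling so that the monotone-height case analysis still closes.
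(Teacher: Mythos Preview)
Your overall architecture matches the paper's: shortlex normal forms, a stacking map built from the almost-convexity short paths, the recursive combed normal filling of Section~\ref{sec:rnf}, and the observation that geodesic normal forms collapse intrinsic and extrinsic distances (Remark~\ref{rmk:anfhasnf}). The paper also cites \cite{hmeiermeastame} for the converse direction, as you do.

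The obstacle you flag, however, is not real, and your proposed workarounds are unnecessary. The point you are missing is that a path \emph{inside} $B(n)$, in the sense of Definition~\ref{def:ac}, means every \emph{point} of the path (not just every vertex) lies in $B(n)$; hence no edge of the path can have both endpoints in $S(n)$, since such an edge has midpoint at path-metric distance $n+\tfrac12$. So for $e\in\ves$ with $f_\ga(e):=\td_\ga(\ep,\text{midpoint of }e)=n+\tfrac12$, every recursive edge $e'$ occurring in $c(e)$ lies in the $\tc_e$ subpath inside $B(n)$ and therefore satisfies $f_\ga(e')\le n-\tfrac12<f_\ga(e)$. Thus $e'<_c e$ implies $f_\ga(e')<f_\ga(e)$, and the usual well-ordering on $\nn$ gives (S2) with no tiebreaker at all. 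This is exactly what the paper does, and it is what drives the Noetherian-induction proof that each $\tht_e$ is $\iota$-tame (the $\tc_e$ portion lives at strictly lower level, and the $x_g,x_h$ portions are degenerate suffix edges).

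One further wrinkle: your route for (2)$\Rightarrow$(1) through Proposition~\ref{prop:htpydomain} does not preserve $\iota$ exactly (the proof of (1)$\Rightarrow$(3) there passes from $f$ to $n\mapsto f(n+1)$), so the geodesic \cni\ you obtain is only $(n{+}1)$-tame, and extracting a $B(n)$-path from the seashelled diagram then needs an extra sphere-reduction step. The paper avoids this by working directly with the \cfl: form $w=w_g a_1 a_2 w_h^{-1}$ with $w_g,w_h$ geodesic, use $\iota$-tameness of $\ps_w$ to bound coarse distances of all interior cells, and read off a bounded-length $B(n)$-path from the boundary of $St_{\dd_w}(\hat e_1\cup\hat e_2)$.
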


\begin{proof}
Suppose that the group $G$ has finite symmetric
generating set $A$, and let $\ga$ be the corresponding
Cayley graph.  

\smallskip

\noindent{\em Almost convex $\Rightarrow$ \afstkbl:}

Suppose that the group $G$ 
is almost convex with respect to $A$, 
with an almost convexity constant $k$.
%Let $R$ be the set of all nonempty
%words over $A$ of length up to $2k+2$ that represent
%the identity element of $G$.  
%It follows from \cite{cannon} (and the proof below)
%that the finite presentation $\pp=\langle A \mid R \rangle$ 
%presents the group $G$.
%Let $X$ be the corresponding Cayley complex, and
%note that since $\pp$ has generating set $A$, then $\xx=\ga$.
Let $\cc=\{z_g \mid g \in G\}$ be the 
set of shortlex  normal forms over $A$ for $G$. 
%Then the corresponding set $\dgd$ of directed degenerate edges 
%are the edges of the maximal tree $\ct$ in the 
%1-skeleton of $X$ determined by the paths
%starting at $\ep$ labeled by the words $\cc$, and
%the set $\ves=\vec E(X) \setminus \dgd$ of recursive edges
%is the set of all 
%oriented edges lying in $X$
%but not in $\ct$.
Let $\dgd$ be the corresponding set of
directed degenerate edges, and let
$\ves=\vec E(\ga) \setminus \dgd$ be
the set of recursive edges.

Let $e$ be any element of $\ves$ and
%, and let $p$ be any point 
%in the interior of $e$.  
%It suffices to define $c(e)$
%in the case that $e$ is
%oriented
%from endpoint $g$ to endpoint $h$ such that
%$z_g \slex z_h$; for the oppositely oriented
%edge $\tilde e$, we then define $c(\tilde e):=c(e)^{-1}$.
suppose that $e$ is oriented from endpoint $g$
to endpoint $h$.
If $\td_\ga(\ep,g)=\td_\ga(\ep,h)=n$, then 
%%%$\td_\ga(\ep,p)=n+\frac{1}{2}$, and
% for any point $p$ in the interior $Int(e)$,  
the points $g$ and
$h$ lie in the same sphere.  Almost convexity of
$(G,A)$ 
%and our choice of relator set $R$
implies that there is a directed edge path in $X$ from $g$ to $h$
of length at most $k$ 
%labeled by a freely reduced word 
%$\tc(e):=c(e) \in A^*$ 
that lies in the ball $B(n)$.  
%By our choice of relator set $R$, this word lies in the
%set $R'=\{x \in A^* \mid \exists a \in a$ with $xa \in R\}$.
We define $\tc_e=c(e)$ to be the shortlex least word over $A$
that labels a path in $B(n)$ from $g$ to $h$.
%Note that any point $p'$ in the
%interior of an edge $e'$ in $\ves$ lying 
%inside $B(n)$ satisfies 
%$\td_X(\ep,p') \le n-\frac{1}{4}$.
%, and hence $e'<e$.
If $\td_\ga(\ep,g)=n$ and $\td_\ga(\ep,h)=n+1$,
then 
%%%%again $\td_X(\ep,p)=n+\frac{1}{2}$,  and 
% for any point $p$ in $Int(e)$,
we can write $z_h =_{A^*} z_{h'}b$ for some $h' \in G$
and $b \in A$.
Hence $g,h' \in S(n)$ and $d_\ga(g,h') \le 2$.  
%Let $e"$ be the edge in $\dgd$
%labeled by $b$ from $s'$ to $h$.
Again in this case we 
define $\tc_e$ to be the shortlex least
word over $A$ that labels 
a path in $X$
of length at most $k$ inside of the ball $B(n)$
from $g$ to $h'$.  The almost convexity property
shows that 
%$\tc_e$ has length at most $k$.
%, and define $c(e):=\tc_e b$. 
%Then 
the word $c(e):=\tc(e)b$ has length at 
most $k+1$, this word labels a path from $g$ to $h$,
and $c(e)$ decomposes as the
word $\tc(e)$ followed by a suffix $x_h=b$ of $z_h$.
Similarly,
if $\td_\ga(\ep,g)=n+1$ and $\td_\ga(\ep,h)=n$, then
$z_g=z_{g'}b$ for some $b \in A$ and $g' \in G$, and we define
$\tc_e$ to be the shortlex least word labeling a path
in $B(n)$ from $g'$ to $h$.  Then $c(e):=b^{-1}\tc_e$
labels a path from $g$ to $h$, and decomposes as
a prefix $b^{-1}$, that is the inverse of a suffix
$x_g=b$ of $z_g$, followed by $\tc_e$.

In each of these three cases, for any point
$p$ in the interior of $e$, we have
$\td_\ga(\ep,p)=n+\frac{1}{2}$.
For any directed edge $e'$ that lies both
in $\ves$ and in the path of $\ga$
starting at $g$ and labeled by $c(e)$, the
edge $e'$ must lie in the subpath
labeled by $\tc(e)$, and hence $e'$ is contained
in $B(n)$. 
Therefore any point $p'$ in the
interior of $e'$ must satisfy 
$\td_X(\ep,p') \le n-\frac{1}{2} < \td_X(\ep,p)$.
That is, if we define the function $f_\ga:\vec E(\ga) \rightarrow \nn$
by $f_\ga(u):=\td_\ga(\ep,q)$ for any (and hence all) $q \in Int(u)$,
we have that $e'<_c e$ implies $f_\ga(e')<f_\ga(e)$ in the
standard well-ordering on $\nn$.  Hence the relation $<_c$  
is a well-founded strict partial ordering.  Properties
(S1) and (S2) of Definition~\ref{def:fstkg} hold for the function $c$.

The image set $c(\ves)$ of this function $c$
is contained in the finite set of all nonempty
words over $A$ of length up to $k+1$ that represent
the identity element of $G$, and so (S3) also holds.
We now have that the tuple $(\cc,c)$ is a \fstkg.  

We are left with showing computability
for the set $\alg$ defined by
$\alg=\{(w,a,x) \mid c'(\wa)=x\} \subset A^* \times A \times 
%(\hr \cup A)
A^*$ 
where $\wa$ denotes
the edge in $\ga$ from $w$ to $wa$, and 
$c'(\wa)=c(\wa)$ for all $\wa \in \ves$,
and $c'(\wa)=a$ for all $\wa \in \dgd$.
Suppose that $(w,a,x)$ is any element of 
$A^* \times A \times A^*$.
Cannon~\cite[Theorem~1.4]{cannon} has shown
that the word problem is solvable for $G$, and
so by enumerating the words in $A^*$ in increasing
shortlex order, and checking whether each in turn
is equal in $G$ to $w$, we can find the shortlex
normal form $z_w$ for $w$.  Similarly we compute $z_{wa}$.  
If the word $z_w a z_{wa}^{-1}$ freely reduces
to $1$, then the tuple $(w,a,x)$ lies in $\alg$ 
if and only if $x=a$.

Suppose on the other hand that the word
$z_w a z_{wa}^{-1}$ does
not freely reduce to $1$.  
If $l(z_w)=l(z_{wa})$ is the natural number $n$, then
we enumerate the
elements of the finite set $\cup_{i=0}^n A^i$
of words of length up to $n$ in increasing
shortlex order.  For each word $y=a_1 \cdots a_m$
in this enumeration, with each $a_i \in A$,
we use the word problem solution again 
to compute the word length $l_{y,i}$
of the normal form $z_{wa_1 \cdots a_i}$ for
each $0 \le i \le m$.
If each $l_{y,i} \le n$, and equalities $l_{y,i} = n$
do not hold for two consecutive indices $i$, then
$(w,a,y)$ lies in $\alg$ and we halt the enumeration; otherwise, we go on to 
check the next word in our enumeration.
The tuple $(w,a,x)$ lies in $\alg$ if and only if $x$
is the unique word $y$ that results when this
algorithm stops.  The cases that $l(z_w)=l(z_{wa}) \pm 1$ are
similar.  

Combining the algorithms in the previous two paragraphs,
we have that the set $\alg$ is computable.

\noindent {\em (1) implies (3)}: 
 
Suppose that the group $G$ 
is almost convex with respect to $A$, 
with almost convexity constant $k$.
Let $(\cc,c)$ be the \stkg\ obtained above,
and let $X$ be the Cayley complex for the
\stkg\ presentation $\pp=\langle A \mid R_c\rangle$, 
with 1-skeleton $\xx=\ga$.
%$R$ is the set of all nonempty
%words over $A$ of length up to $2k+2$ that represent
%the identity element of $G$, and let $(\cc,c)$
%be the stacking in the previous paragraph.
Let $\cle=\{(\dd_e,\tht_e) \mid e \in E(X) \}$
be the set of {\edg}s and \ehs\ 
from the associated \rcnf.
%\  with respect to the presentation $\pp$.
%Use the seashell method to build a \cfl\ $\cld=\{(\dd_w,\ps_w)\}$
%from $\cle$.

Theorem~\ref{thm:rnftame} can now be applied, but
unfortunately this result is insufficient. Although
the fact that all of the normal forms in $\cc$
are geodesic implies that the functions $\kti$ and $\kxi$
are the identity, the \tfi\ bounds
$\mui$ and $\mue$ are not.
Instead, we follow the 
steps of the algorithm that built the \rcnf\ more carefully.
%and then
%apply Proposition~\ref{prop:etistc}, in order
%to obtain stricter
%bounds in this case.  

%The first step in this process is to show that
%each \ehy\  in the \rcnf\ is extrinsically $\iota$-tame.
Let $e$ be any edge of $X$,
again with endpoints $g$ and $h$,
and let $n:=\min\{d_X(\ep,g),d_X(\ep,h)\}$;
that is, either $g,h \in S(n)$, or one of these
points lies in $S(n)$ and the other is in $S(n+1)$.
Let $\he$ 
%with endpoints $\hat g$ and $\hat h$
be the edge corresponding to $e$ in the
van Kampen diagram $\dd_e$, and let $p$ be an
arbitrary point in $\he$.

{\em Case I.  Suppose that $e \in \ugd$.}
Then $\dd_e$ is a line segment with no 2-cells,
and the path $\pi_{\dd_e} \circ \tht_e(p,\cdot)$ follows a
geodesic in $\xx$.  
Hence this path is extrinsically $\iota$-tame.

{\em Case II.  Suppose that $e \in E(X) \setminus \ugd$.}
We prove this case by Noetherian induction.
By construction, the paths $\pi_{\dd_e} \circ \tht_e(\hat g, \cdot)$
and $\pi_{\dd_e} \circ \tht_e(\hat h,\cdot)$ follow the geodesic
paths in $X$ starting from $\ep$ and
labeled by the words $y_g$ and $y_h$ 
at constant speed.

Suppose that $p$ is a point in the interior of $\he$.
We follow the notation of the recursive construction of $\tht_e$
in Section~\ref{sec:rnf}.  
In that construction, \ehs\ are constructed for directed
edges; by slight abuse of notation,
let $e$ also denote the directed edge from $g$ to $h$
that yields the element $(\dd_e,\tht_e)$ of $\cle$.
Recall that this recursive procedure utilizes a factorization of the 
word $c(e)$ as $c(e)=x_g \tc_e x_h$.  In our
definition of $c(e)$ above, we defined this factorization
so that for each edge $e'$ (no matter whether $e'$
is in $\dgd$ or $\ves$)
in the $\tc_e$ path, we have
$f_\ga(e')<f_\ga(e)$.
On the interval $[0,a_p]$,
the path $\tht_e(p,\cdot)$ follows a
path $\tht_i(\Xi_e(p,0),\cdot)$ in a subdiagram of
$\dd_e$ that is either  an \ehy\  for an edge $e_i$
of $X$ that lies in this $\tc_e$ subpath, or
a line segment labeled by a shortlex normal form.  
%In the case that the word $\tx_e$ is empty
%(which includes the base case of our induction),
%$\tht_e(p,\cdot)$ follows the geodesic path labeled
%$z_q$ in the subdiagram $\dd_e'$.
Hence either by induction or case I,
the homotopy $\tht_i$ is extrinsically $\iota$-tame.

On the interval $[a_p,1]$, the path 
$\tht_e(p,\cdot)$ follows the path $\Xi_e(p,\cdot)$
from the point $\Xi_e(p,0)$ (in the subpath of $\bo \hc_e$
labeled $\tc_e$, whose image in $X$
is contained in $B(n)$) through the interior of
the 2-cell $\hc_e$ of $\dd_e$ to the point $p$.
%If $\td_X(\ep,\pi_{\dd_e}(p))=n+\frac{1}{4}$, 
We have
$\td_X(\ep,\pi_{\dd_e}(\Xi_e(p,0))) \le n$, 
$\td_X(\ep,\pi_{\dd_e}(\Xi_e(p,t))) = n+\frac{1}{4}$
for all $t \in (0,1)$, 
and $\td_X(\ep,\pi_{\dd_e}(\Xi_e(p,1))) = 
\td_X(\ep,p)=f_\ga(e)=n+\frac{1}{2}$.
%If $\td_X(\ep,\pi_{\dd_e}(p))=n+\frac{3}{4}$ for some $n \in \N$, then
%%in the notation of the definition of the \ehy\ 
%%associated to the \stkg\  $(\cc,\ves,c)$, the edge $e"$
%%from $s'$ to $h$ in $c(e)$ is part of
%%the edge path in $\ct$ from $s$ to $h$ labeled by
%%the suffix $x_h$ of the normal form $z_h$,
%%and so the point $\Xi_e(p,0)$ does not lie on
%%the corresponding edge $\widehat{e"}$ of $\bo \hc_e$, 
%%with the possible exception of the endpoint 
%%$\widehat{s'}$.  
%%Then in this case we have
%$\td_X(\ep,\Xi_e(p,0)) \le n$, $\td_X(\ep,\Xi_e(p,t)) = n+\frac{3}{8}$
%for all $t \in (0,1)$, 
%and $\td_X(\ep,\Xi_e(p,1)) = n+\frac{3}{4}$.
Hence the path $\Xi_e(p,\cdot)$ is extrinsically $\iota$-tame.
Putting these pieces together, we have that $\tht_e$
is also extrinsically $\iota$-tame in Case II. 

Thus in the \rcnf\  
%given by 
($\cc$, $\cle$),
each \ehy\  is extrinsically $\iota$-tame, and 
hence the same is true for the van Kampen
homotopies of the \rcf\ $(\cc,\cld)$ induced
by $\cle$, by
the proof of (4) $\Rightarrow$ (1) in 
Proposition~\ref{prop:etistc}. Therefore $(G,\pp)$
satisfies an extrinsic \tfi\ with respect to the same
function $\iota$.

{\em (1) implies (2)}: 
As noted in Remark~\ref{rmk:anfhasnf}, the
\rcf\ 
%$(\cc,\cle)$ 
constructed above from the almost convexity
condition satisfies the 
property that 
for every vertex $v$ in a van Kampen diagram $\dd$ of $\cld$, there is
a path in $\dd$ from $*$ to $v$ labeled by the
shortlex normal form for the element $\pi_{\dd}(v)$
of $G$.  Since these normal forms label geodesics in $X$,
it follows that intrinsic and extrinsic distances (to the basepoints)
in the diagrams $\dd$ of $\cld$ are the same.
Thus the pair $(G,\pp)$ 
%satisfies a geodesic intrinsic \cni\ 
%for the function $\iota$.  
%The proof of (3) $\Rightarrow$ (1) in Proposition~\ref{prop:htpydomain}
%shows that $(G,\pp)$
satisfies an intrinsic \tfi\ with respect to the same
function $\iota$.

%As in the proof of Proposition~\ref{prop:htpydomain},
%to obtain the van Kampen diagram for an arbitrary word
%$w=a_1 \cdots a_m \in A^*$ with $w =_G \ep$ utilized 
%to give the extrinsic 
%\tfi, we concatenate the diagrams for the words
%$w_{a_a \cdots a_{i-1}}a_iw_{a_1 \cdots a_i}$.  Hence
%the proof of (1) implies (2) above shows that if $G$ is
%almost convex with respect to the generating set $A$,
%then there is a presentation for $G$ with generating set
%$A$ and a {\em geodesic} collection of van Kampen
%diagrams and van Kampen homotopies that satisfy the 
%extrinsic \tfi\ $(\dagger^e)$ for the identity function $f$.
%Now Theorem~\ref{thm:qgitisqget} shows that
%this result is true with ``extrinsic'' replaced by
%``intrinsic''.

{\em (2) or (3) implies (1)}: 
The proof of this direction in the extrinsic case
closely follows
the proof of~\cite[Theorem~C]{hmeiermeastame}, and
%Proposition~\ref{prop:etistc}, and 
the proof in the intrinsic case is quite similar.
\end{proof}

\begin{remark} {\em 
As in Remark~\ref{rmk:quasiastk}, Cannon's word 
problem algorithm for almost convex groups, which we
applied in the proof of Theorem~\ref{thm:aceti},
requires the use of an enumeration of a
finite set of words over $A$, namely
those that represent $\ep$ in $G$ and have length at most 
$k+2$.  As Cannon
also points out~\cite[p.~199]{cannon}, although this
set is indeed recursive, there may not be an algorithm
to find this set, starting from $(G,A)$ and the constant $k$.}
\end{remark}

Since every word hyperbolic group is almost convex,
and the set of shortlex normal forms (used in the
proof of Theorem~\ref{thm:aceti} to construct a \stkg\ 
for any word hyperbolic group) is a regular language,
we have shown that every
word hyperbolic group is \rstkbl.
Combining Theorem~\ref{thm:crsrecit} with a result
of Hermiller and Shapiro~\cite{hs}, that
the fundamental group of every closed 3-manifold with a
uniform geometry other than hyperbolic must have a
finite complete rewriting system, shows
that these groups are \rstkbl\ as well.  
Hence we obtain the following.

\begin{corollary}\label{cor:3mfd}
If $G$ is the fundamental group of a closed 3-manifold with
a uniform geometry, then $G$ is \rstkbl. 
\end{corollary}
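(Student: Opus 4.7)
The plan is to split into cases according to which of the eight Thurston geometries the closed 3-manifold carries, and to handle the hyperbolic case separately from the other seven. In both cases, all work has essentially been done in the preceding sections; the corollary is a matter of verifying that each geometry falls into one of two previously treated classes of \rstkbl\ groups.

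First I would dispose of the hyperbolic case. If $G=\pi_1(M)$ with $M$ a closed hyperbolic 3-manifold, then $G$ acts geometrically on $\mathbb{H}^3$ and is therefore word hyperbolic. As noted in the remarks following Theorem~\ref{thm:aceti}, every word hyperbolic group is almost convex with respect to any finite generating set (Cannon), and the set of shortlex normal forms used in the proof of Theorem~\ref{thm:aceti} is a regular language for word hyperbolic groups (by~\cite[Thms.~3.4.5, 2.5.1]{echlpt}). Hence Theorem~\ref{thm:aceti} provides an \afstkbl\ structure $(\cc,c)$ for $G$ whose normal form set $\cc$ is regular, making $G$ \rstkbl.

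Next I would handle the seven remaining uniform geometries: $S^3$, $E^3$, $S^2\times\mathbb{R}$, $\mathbb{H}^2\times\mathbb{R}$, $\widetilde{SL_2(\mathbb{R})}$, Nil, and Sol. Here I invoke the result of Hermiller and Shapiro~\cite{hs} stating that the fundamental group of any closed 3-manifold admitting a uniform geometry other than hyperbolic possesses a finite complete rewriting system. Theorem~\ref{thm:crsrecit} then immediately yields that $G$ is \rstkbl\ in each of these cases, completing the case analysis.

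Because both Theorem~\ref{thm:aceti} and Theorem~\ref{thm:crsrecit} deliver the full \rstkbl\ conclusion (regular language of normal forms together with the recursiveness of $\alg$), no additional work is needed beyond combining them with the appropriate geometric input in each case. The main potential obstacle would be the hyperbolic case, since Theorem~\ref{thm:aceti} on its own produces only an \afstkbl\ structure; however the regularity of the shortlex language over a word hyperbolic group, already recorded in the discussion after Theorem~\ref{thm:aceti}, upgrades this automatically to \rstkbl.
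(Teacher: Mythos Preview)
Your proposal is correct and follows essentially the same argument as the paper: split into the hyperbolic case (handled via word hyperbolicity, almost convexity, and regularity of the shortlex language through Theorem~\ref{thm:aceti}) and the remaining seven geometries (handled via the Hermiller--Shapiro finite complete rewriting systems and Theorem~\ref{thm:crsrecit}). Your observation that the regularity of the shortlex normal forms upgrades the \afstkbl\ conclusion of Theorem~\ref{thm:aceti} to \rstkbl\ matches exactly what the paper records in the paragraph preceding the corollary.
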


%%%%%%%%%%%%%%%%%%%%%%%%%%%%%%%%%%%%%%%%%%%%%%%%%%%%%%%%%%%%%%%%%%%%%%%%%%%%
%%%%%%%%%%%%%%%%%%%%%%%%%%%%%%%%%%%%%%%%%%%%%%%%%%%%%%%%%%%%%%%%%%%%%%%%%%%%
%%%%%%%%%%%%%%%%%%%%%%%%%%%%%%%%%%%%%%%%%%%%%%%%%%%%%%%%%%%%%%%%%%%%%%%%%%%%

\section{Groups with a fellow traveler property and their
    tame filling inequalities}\label{sec:combable}

%%%%%%%%%%%%%%%%%%%%%%%%%%%%%%%%%%%%%%%%%%%%%%%%%%%%%%%%%%%%%%%%%%%%%%%%%%%%
%%%%%%%%%%%%%%%%%%%%%%%%%%%%%%%%%%%%%%%%%%%%%%%%%%%%%%%%%%%%%%%%%%%%%%%%%%%%

In this Section, we consider a class of finitely presented groups which
admit a rather different procedure for constructing van Kampen diagrams.
Let $G$ be a group with a finite
inverse-closed generating set $A$ such that no element
of $A$ represents the identity $\ep$ of $G$,
and let $\ga$ be the Cayley graph of $G$ over $A$.
%, and
%no two distinct elements of $A$ represent the same
%element of $G$.
We also assume that $G$ admits a set $\cc=\{y_g \mid g \in G\}$ 
of simple word normal forms over $A$ for $G$ that satisfies a (synchronous)
{\em $K$-fellow traveler property}.  That is, there is a constant
$K \ge 1$ such that whenever $g,h \in G$ and $a \in A$ with $ga=_G h$, and
we write $y_g=a_1 \cdots a_m$ and $y_h=b_1 \cdots b_n$ with
each $a_i,b_i \in A$
(where, without loss of generality, we assume $m \le n$),
then for all $1 \le i \le m$ we have
$d_\ga(a_1 \cdots a_i,b_1 \cdots b_i) \le K$, and
for all $m< i \le n$ we have $d_X(g,b_1 \cdots b_i) \le K$.

For each $m<i \le n$, let $a_i$ denote the empty word.
Let $c_0$ denote the empty word, let $c_n:=a$, and  
for each $1 \le i \le n-1$, let $c_i$ be a word in $A^*$
labeling a geodesic path in $\ga$ from $a_1 \cdots a_i$ to
$b_1 \cdots b_i$.
Thus each $c_i$ has length at most $K$.

\begin{comment} 
Note that since the ball of radius $K$ about the vertex
$g$ in $X$ contains at most $(|A|+1)^K$
elements of $G$, and the words $b_1 \cdots b_i$, as prefixes
of $y_h$, must each be a normal form of one
of this finite set of elements, we have 
$n-m < (|A|+1)^K$.
\end{comment}

This fellow traveler property implies that the set $R$ of
nonempty words over $A$ of length up to $2K+2$ that represent
the trivial element is a set
of defining relators for $G$.  
Let $\pp=\langle A \mid R \rangle$ be the
symmetrized presentation for $G$, and let
$X$ be the Cayley complex.

A van Kampen diagram $\dd_e$ for the word $y_gay_h^{-1}$
corresponding to the edge $e$ labeled $a$ from $g$ to $h$ in $X$
is built by successively gluing 2-cells 
labeled $c_{i-1}a_ic_i^{-1}b_{i-1}$, for $1 \le i \le n$,
along their common $c_i$ boundaries.  Then the
diagram $\dd_e$ is ``thin'', in that it has only the
width of (at most) one 2-cell.
An \ehy\  $\tht_e$ for this diagram can be constructed to go
successively through each 2-cell in turn
from the basepoint $*$ to the edge $\he$ corresponding to $e$; 
see Figure~\ref{fig:ladder} for an illustration.
\begin{figure}
\begin{center}
\includegraphics[width=4.8in,height=1.2in]{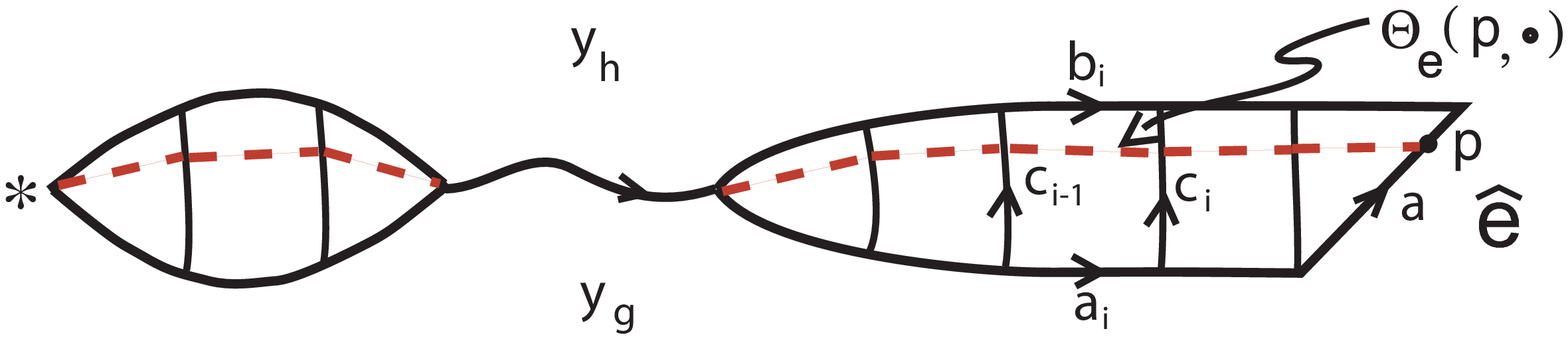}
\caption{``Thin'' van Kampen diagram $\dd_e$}\label{fig:ladder}
\end{center}
\end{figure}
Let $\cle=\{(\dd_e,\tht_e)\}$ be the collection of these 
{\edg}s and \ehs; the pair $(\cc,\cle)$ is a
\cnf.

\begin{proposition}\label{prop:combing}
Let $G$ be a group with a finite generating set $A$ and 
Cayley graph $\ga$.  If $G$ has a
set $\cc$ of simple word normal forms with a $K$-fellow traveler
property such that the set
$$S_n:=\{w \in A^* \mid d_\ga(\ep,w) \le n \text{ and } 
    w \text{ is a prefix of a word in }\cc\}$$
is a finite set for all $n \in \N$, then
$G$ satisfies both intrinsic and extrinsic \tfs\ for
finite-valued functions.
\end{proposition}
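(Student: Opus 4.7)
The plan is to analyze the ``thin ladder'' \cnf\ $(\cc,\cle)$ constructed just prior to the proposition, to show that each \ehy\ $\tht_e$ is intrinsically and extrinsically tame with respect to finite-valued functions, and then to transfer these bounds to the van Kampen homotopies $\ps_w$ of the \cfl\ induced by $(\cc,\cle)$ via the seashell method. The extrinsic transfer is immediate from the proof of (4)~$\Rightarrow$~(1) in Proposition~\ref{prop:etistc}, since the seashell quotient preserves extrinsic distances; the intrinsic transfer is where the main difficulty lies, since that quotient map can \emph{shorten} intrinsic distances when $\cc$ is not assumed geodesic.

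The first step is to use the finiteness of $S_n$ to bound the sizes of the ladder diagrams. Define $M : \N \to \N$ by $M(n) := \max\{l(w) : w \in S_n\}$, which is finite by hypothesis. Since each $y_g \in \cc$ is a prefix of itself and $d_\ga(\ep,y_g) = d_\ga(\ep,g)$, we have $y_g \in S_{d_\ga(\ep,g)}$ and hence $l(y_g) \le M(d_\ga(\ep,g))$. Thus for any edge $e = e_{g,h}$ of the Cayley complex $X$, setting $N_e := \max\{d_\ga(\ep,g),d_\ga(\ep,h)\}$, the diagram $\dd_e$ has at most $M(N_e)$ rungs, each of boundary length $\le 2K+2$, and one obtains $\idi(\dd_e), \edi(\dd_e) \le M(N_e) + K$ by using the direct path along $y_g$ followed by a $c_j$ of length at most $K$.

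Next I would construct $\tht_e$ to traverse the ladder \emph{monotonically} in the rung index: for each $p \in \he$, the path $\tht_e(p,\cdot)$ enters rungs $D_1, D_2, \ldots, D_{i(t)}$ in succession with $i(t)$ non-decreasing in $t$, and then slides across the final rung $D_n$ to reach $p$, with boundary condition (e2) and consistency condition (n3) maintained in a standard way. Because the vertex $a_1 \cdots a_k$ lies at intrinsic distance exactly $k$ from $*$ in the thin ladder, this monotone construction gives $i(t) \le \td_{\dd_e}(*,\tht_e(p,t)) + 1$ while every point of $D_1 \cup \cdots \cup D_{i(t)}$ has intrinsic coarse distance at most $i(t) + K + 1$ from $*$, yielding the linear intrinsic tameness bound $\td_{\dd_e}(*, \tht_e(p,s)) \le \td_{\dd_e}(*, \tht_e(p,t)) + K + 2$. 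For extrinsic tameness, the vertex $a_1 \cdots a_{i(t)}$ satisfies $d_\ga(\ep, a_1 \cdots a_{i(t)}) \le \td_X(\ep, \pi_{\dd_e}(\tht_e(p,t))) + K + 1$ by the triangle inequality around the 2-cell containing $\pi_{\dd_e}(\tht_e(p,t))$, so $i(t) \le M(\lceil \td_X(\ep, \pi_{\dd_e}(\tht_e(p,t))) \rceil + K + 1)$; combining this with the trivial bound $d_\ga(\ep, a_1 \cdots a_j) \le j$ gives extrinsic tameness of $\tht_e$ with respect to the finite-valued function $f_e(n) := M(\lceil n \rceil + K + 1) + K + 1$.

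These tameness bounds on $\tht_e$ transfer directly to extrinsic tameness of each $\ps_w$, yielding the extrinsic \tfi. For the intrinsic \tfi, the main remaining obstacle is handled by two estimates: the seashell contraction gives $\td_{\dd_w}(*, \ps_w(p,s)) \le \td_{\dd_e}(*, \tht_e(q,s))$ for the corresponding edge diagram $\dd_e$ embedded in $\dd_w$, while a length-$r$ path in $\dd_w$ from $*$ projects to a length-$r$ path in $X$, so $d_X(\ep, \pi_{\dd_e}(\tht_e(q,t))) \le \td_{\dd_w}(*, \ps_w(p,t))$, and then the prefix-length argument above bounds $\td_{\dd_e}(*, \tht_e(q,t)) \le M(\lceil \td_{\dd_w}(*, \ps_w(p,t)) \rceil + K + 1) + K + 1$. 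Composing with the linear intrinsic tameness of $\tht_e$ yields intrinsic tameness of $\ps_w$ for a finite-valued function of the form $n \mapsto M(\lceil n \rceil + K + 1) + 2K + 3$, completing the proof.
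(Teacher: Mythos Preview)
Your proof is correct and uses essentially the same ingredients as the paper: the thin ladder structure, monotone progression of the homotopy through the rungs, and the function $M(n)=\max\{l(w):w\in S_n\}$ (which the paper calls $t^i$) to control prefix lengths. The organization differs slightly: you first establish tameness of $\tht_e$ within $\dd_e$ and then transfer to $\dd_w$ via the seashell, whereas the paper works directly in $\dd_w$, noting that $\ps_w(p,\cdot)$ lies in a subdiagram $\dd_e$ and immediately picking nearby vertices $v_s,v_t$ on the $y_g$ boundary with $v_s$ preceding $v_t$. Your two-step route is a bit more modular; the paper's one-pass argument is shorter and avoids separately stating the linear intrinsic tameness of $\tht_e$. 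One minor sharpening in the paper: for the extrinsic bound it uses $t^e(n):=\max\{d_X(\ep,v):v\text{ a prefix of a word in }S_n\}$, which can be smaller than $t^i$, though this is irrelevant for the finite-valued conclusion.
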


\begin{proof}
We utilize the finite presentation $\pp$ for $G$, with Cayley
complex $X$, and the \cnf\ $(\cc,\cle)$ constructed above.
Let $\cld=\{(\dd_w,\ps_w) \mid w \in A^*, w=_G\ep\}$ be the 
%collection of van Kampen diagrams and homotopies 
\cfl\ obtained from $\cle$ 
using the seashell procedure.
Also let $\dd_w$ be any of the diagrams in $\cld$, let 
$p$ be any point in $\bo \dd_w$, and  let $0 \le s<t\le 1$.

Let $\he$ be an edge of $\bo \dd_w$ containing $p$
(where $p$ may be in the interior or an endpoint).
Then the path $\ps_w(p,\cdot)$ lies in a 
subdiagram $\dd_e$ of $\dd_w$ such that $\dd_e$ is
the diagram in $\cle$ corresponding to the edge $e=\pi_{\dd_w}(\he)$
of $X$, and $\ps_w(p,\cdot)=\tht_e(p,\cdot)$.  
Let $\hat g$ be an endpoint of $\he$, with
$g=\pi_{\dd_e}(\hat g) \in G$ an endpoint of $e$. 
%Let $e$ be any edge of $X$, and let $(\dd_e,\tht_e)$
%be the corresponding element of $\cle$.  
%%As above,
%%we take the edge $e$ to have endpoints $g$ and $h$
%%and label $a$, where the normal forms $y_g,y_h \in \cc$
%%for $g,h$ satisfy $l(y_g) \le l(y_h)$.
Let 
%$g$ be an endpoint of $e$, with normal form 
$y_g$ be the normal form of $g$ in $\cc$.

Applying the ``thinness'' of $\dd_e$, there is a 
path labeled $y_g$ in $\bo \dd_e$ starting at the 
basepoint, and every point of $\dd_e$ lies in some closed cell
of $\dd_e$ that also contains a vertex in this boundary path.
In particular, there are vertices $v_s$ and $v_t$ on the boundary path
$y_g$ of $\dd_e$ such that the point $\ps_w(p,s)=\tht_e(p,s)$ and 
the point $v_s$ occupy
the same closed 0, 1, or 2-cell in $\dd_e$
(and hence also in $\dd_w$), $\ps_w(p,t)=\tht_e(p,t)$ and $v_t$ occupy
a common closed cell, and $v_s$ occurs before
(i.e., closer to the basepoint) or at $v_t$ along
the $y_g$ path.  As usual let $\maxr \le 2K+2$ denote the length
of the longest relator in the presentation $\pp$. 
Then we have
$|\td_{\dd_w}(*,\ps_w(p,s))-\td_{\dd_w}(*,v_s)| \le \maxr+1$ and
$|\td_{X}(\ep,\pi_{\dd_w}(\ps_w(p,s)))-\td_{X}(\ep,\pi_{\dd_w}(v_s))| 
\le \maxr+1$, and similarly for the pair $\ps_w(p,t)$ and $v_t$.
Write the word $y_g=y_1y_2y_3$ where the vertex $v_s$
occurs on the $y_g$ path in $\bo \dd_e \subset \dd_w$ between the $y_1$ and
$y_2$ subwords, and the vertex $v_t$ between the $y_2$ and $y_3$
subwords.  Note that 
%$y_1$ and 
$y_1y_2$ is a
prefix of a normal form word in $\cc$,
and so satisfies 
%$y_1 \in S_{d_X(\ep,\pi_{\dd_w}(v_s))}$ and 
$y_1y_2 \in S_{d_X(\ep,\pi_{\dd_w}(v_t))}$.

Define the function $t^i:\N \ra \N$ by
$t^i(n):=\max\{l(w) \mid w \in S_n\}$.  Since each $|S_n|$ is 
finite, this function is finite-valued.
Utilizing the fact that $t^i$ is a nondecreasing function, we have
\begin{eqnarray*}
\td_{\dd_w}(*,\ps_w(p,s))
&\le&   \td_{\dd_w}(*,v_s)+ \maxr+1 
\text{\ \ \ \ }\le \text{\ \ \ \ }    l(y_1)+\maxr+1 \\
&\le&    l(y_1y_2)+\maxr+1 
\text{\ \ \ \ } \le \text{\ \ \ \ } t^i(d_X(\ep,\pi_{\dd_w}(v_t)))+\maxr+1 \\
&\le&    t^i(\td_{\dd_w}(*,v_t))+\maxr+1 \\
& \le & t^i(\lceil \td_{\dd_w}(*,\ps_w(p,t))\rceil+\maxr+1)+\maxr+1
\end{eqnarray*}
Then $G$ satisfies an intrinsic \tfi\ for the
function $n \ra t^i(\lceil n \rceil+2K+3)+2K+3$.

Next define the function $t^e:\N \ra \N$ by

\hspace{.5in}
  $t^e(n):=\max\{d_X(\ep,v) \mid v \text{ is a prefix of a word in } S_n\}$.

\noindent Again, this is a finite-valued nondecreasing function.
In this case, we note that since $y_1$ is a prefix of $y_1y_2$,
then $y_1$ is a prefix of a word in $S_{d_X(\ep,\pi_{\dd_w}(v_t))}$.
Then
\begin{eqnarray*}
\td_{X}(\ep,\pi_{\dd_w}(\ps_w(p,s)))
&\le&   \td_{X}(\ep,\pi_{\dd_w}(v_s))+\maxr+1 
\text{\ \ } = \text{\ \ } d_X(\ep,y_1)+\maxr+1\\
& \le & t^e(d_X(\ep,\pi_{\dd_w}(v_t)))+\maxr+1 \\
&\le & t^e(\lceil \td_X(\ep,\pi_{\dd_w}(\ps_w(p,t)))\rceil+\maxr+1)+\maxr+1.
\end{eqnarray*}
Then $G$ satisfies an extrinsic \tfi\ for the
function $n \ra t^e(\lceil n \rceil+2K+3)+2K+3$.
\end{proof}

We highlight two special cases in which the hypothesis
of Proposition~\ref{prop:combing}, that each set
$S_n$ is finite, is satisfied.
The first is the case in which the set of normal
forms is prefix-closed.  For this case, the functions
$t^i=\kti$ and $t^e=\kte$ are the functions defined
in Section~\ref{sec:rnf}, and so we have the following.

\begin{corollary}
If $G$ has a prefix-closed set of normal forms that satisfies
a $K$-fellow traveler property, then $G$ admits an
intrinsic \tfi\ for the function 
$f^i(n)=\kti(\lceil n \rceil+2K+3)+2K+3$
and an extrinsic \tfi\ for the function
$f^e(n)=\kte(\lceil n \rceil+2K+3)+2K+3$.
\end{corollary}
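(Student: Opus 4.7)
The plan is to apply Proposition~\ref{prop:combing} directly and then observe that, under prefix-closure, the auxiliary quantities $t^i$ and $t^e$ appearing in its conclusion coincide with the functions $\kti$ and $\kte$ from Section~\ref{sec:rnf}.

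First I would verify the two hypotheses of Proposition~\ref{prop:combing}. To confirm that every word in $\cc$ is a simple word, suppose toward contradiction that some $y_g$ repeats a vertex along its path in $\ga$; then two distinct prefixes $u_1, u_2$ of $y_g$ represent the same group element $h$. By prefix-closure both $u_1$ and $u_2$ lie in $\cc$, contradicting uniqueness of normal forms. For finiteness of $S_n$, prefix-closure gives
\[
  S_n \;=\; \{\, w \in A^* \mid d_\ga(\ep, w) \le n \text{ and } w \text{ is a prefix of a word in } \cc \,\} \;=\; \{\, y_g \mid g \in B(n) \,\},
\]
since any prefix of a normal form is itself a normal form $y_h$, and the constraint $d_\ga(\ep,w) \le n$ then says exactly that $h \in B(n)$. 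Because $B(n)$ is finite, so is $S_n$, and Proposition~\ref{prop:combing} applies.

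Next I would match the functions. From the identification above,
\[
  t^i(n) \;=\; \max\{\,l(w) \mid w \in S_n\,\} \;=\; \max\{\,l(y_g) \mid g \in B(n)\,\} \;=\; \kti(n).
\]
For the extrinsic function, a word $v$ is a prefix of some element of $S_n$ if and only if $v$ is a prefix of some $y_g$ with $g \in B(n)$, so
\[
  t^e(n) \;=\; \max\{\, d_X(\ep, v) \mid v \text{ is a prefix of some } y_g \text{ with } g \in B(n)\,\} \;=\; \kte(n).
\]
Substituting $t^i = \kti$ and $t^e = \kte$ into the tame filling bounds $n \mapsto t^i(\lceil n \rceil + 2K+3)+2K+3$ and $n \mapsto t^e(\lceil n \rceil + 2K+3)+2K+3$ given by Proposition~\ref{prop:combing} yields exactly $f^i$ and $f^e$.

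The only point requiring care is the simple-word verification, but this is forced cleanly by prefix-closure together with the uniqueness of normal forms; there is no genuine obstacle, and the corollary follows as an immediate specialization of Proposition~\ref{prop:combing}.
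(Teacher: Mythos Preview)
Your proof is correct and follows the same approach as the paper: identify $S_n$ with $\{y_g \mid g \in B(n)\}$ under prefix-closure, so that $t^i = \kti$ and $t^e = \kte$, and then read off the bounds from Proposition~\ref{prop:combing}. You supply one detail the paper leaves implicit, namely the verification that prefix-closed normal forms are automatically simple words; this is indeed needed to invoke Proposition~\ref{prop:combing} as stated, and your argument for it is clean.
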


The second is the case in which the set of  normal forms
is quasi-geodesic; that is, there are constants
$\lambda,\lambda' \ge 1$ such that every word in this 
set is a   $(\lambda,\lambda')$-quasi-geodesic.
For a group $G$ with generators $A$
and Cayley graph $\ga$,
a word $y \in A^*$ is a {\em $(\lambda,\lambda')$-quasi-geodesic} 
if whenever $y=y_1y_2y_3$, then 
$l(y_2) \le \lambda d_\ga(\ep,y_2)+\lambda'$.
Actually, we will only need a slightly weaker property,
that this inequality holds whenever $y_2$ is
a prefix of $y$ (i.e., when $y_1=1$).
In this case, the set $S_n$ is a subset of
the finite set $\cup_{i=0}^{\lambda n+\lambda'} A^i$
of words of length at most $\lambda n+\lambda'$.
Then $t^e(n) \le t^i(n) \le \lambda n+\lambda'$ for all $n$.
Putting these results together yields the following.

\begin{corollary}\label{cor:combable}
If a finitely generated group $G$ admits a 
quasi-geodesic language of simple word normal forms
satisfying a $K$-fellow traveler property, then $G$ satisfies linear
intrinsic and extrinsic \tfs.
\end{corollary}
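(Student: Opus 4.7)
The plan is to apply Proposition~\ref{prop:combing} with the quasi-geodesic hypothesis supplying the control needed on the sets $S_n$. Suppose every normal form $y \in \cc$ is a $(\lambda,\lambda')$-quasi-geodesic, in the sense that each prefix $y_2$ of $y$ satisfies $l(y_2) \le \lambda\, d_\ga(\ep,y_2) + \lambda'$. If $w \in S_n$, then $w$ is a prefix of some normal form and $d_\ga(\ep,w) \le n$, so the quasi-geodesic bound gives $l(w) \le \lambda n + \lambda'$. Hence $S_n \subseteq \bigcup_{i=0}^{\lambda n + \lambda'} A^i$, which is finite since $A$ is finite; this verifies the finiteness hypothesis of Proposition~\ref{prop:combing}.

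Using this same length bound, I would read off directly that $t^i(n) = \max\{l(w) \mid w \in S_n\} \le \lambda n + \lambda'$, and since any prefix $v$ of a word in $S_n$ satisfies $d_X(\ep,v) \le l(v) \le \lambda n + \lambda'$, also $t^e(n) \le t^i(n) \le \lambda n + \lambda'$. Substituting these linear upper bounds into the conclusions of Proposition~\ref{prop:combing} produces both an intrinsic and an extrinsic \tfi\ for a function dominated by $n \mapsto \lambda(\lceil n \rceil + 2K+3) + \lambda' + 2K+3$, which is Lipschitz equivalent to a linear function. This yields the desired linear intrinsic and extrinsic \tfs.

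There is no substantive obstacle here: Proposition~\ref{prop:combing} has already done all of the combinatorial and homotopical work in building the \cnf\ and bounding the tameness of its \ehs, and the only new ingredient is the elementary observation that the quasi-geodesic inequality on prefixes translates the length-versus-distance comparison directly into a linear upper bound on $t^i$ and $t^e$. The one subtlety worth flagging in the writeup is that only the prefix form of the quasi-geodesic inequality (the case $y_1 = 1$) is invoked, in agreement with the weakening indicated in the discussion immediately preceding the corollary.
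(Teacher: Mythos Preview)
Your proposal is correct and follows essentially the same approach as the paper: you observe that the prefix form of the quasi-geodesic inequality gives $S_n \subseteq \bigcup_{i=0}^{\lambda n+\lambda'} A^i$, deduce $t^e(n) \le t^i(n) \le \lambda n+\lambda'$, and then substitute into the bounds from Proposition~\ref{prop:combing} to obtain linear tame filling inequalities. Your remark that only the prefix case $y_1=1$ of the quasi-geodesic condition is needed also matches the paper's observation.
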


%%%%%%%%%%%%%%%%%%%%%%%%%%%%%%%%%%%%%%%%%%%%%%%%%%%%%%%%%%%%%%%%%%%%%%%%%%%%
%%%%%%%%%%%%%%%%%%%%%%%%%%%%%%%%%%%%%%%%%%%%%%%%%%%%%%%%%%%%%%%%%%%%%%%%%%%%
%%%%%%%%%%%%%%%%%%%%%%%%%%%%%%%%%%%%%%%%%%%%%%%%%%%%%%%%%%%%%%%%%%%%%%%%%%%%

\section{Quasi-isometry invariance for \tfs}\label{sec:qiinv}

%%%%%%%%%%%%%%%%%%%%%%%%%%%%%%%%%%%%%%%%%%%%%%%%%%%%%%%%%%%%%%%%%%%%%%%%%%%%
%%%%%%%%%%%%%%%%%%%%%%%%%%%%%%%%%%%%%%%%%%%%%%%%%%%%%%%%%%%%%%%%%%%%%%%%%%%%

In this section we show 
that, as with the \dms~\cite{bridsonriley},~\cite{gersten}, 
\tfs\  are also
quasi-isometry invariants, up to Lipschitz equivalence
of functions (and in the intrinsic case, up to 
sufficiently large set of 
defining relations).  In the extrinsic case,
this follows from Proposition~\ref{prop:etistc}
and the proof of Theorem~\cite[Theorem~A]{hmeiermeastame},
but with a slightly different definition of
coarse distance.  We include the details for
both  here, to illustrate the difference between
the intrinsic and extrinsic cases.

\begin{theorem}\label{thm:itisqi}
Suppose that $(G,\pp)$ and $(H,\pp')$ 
are quasi-isometric groups with
finite presentations.
If $(G,\pp)$ satisfies an extrinsic 
\tfi\  with respect to $f$, then $(H,\pp')$
satisfies an extrinsic
\tfi\  with respect to a function
that is Lipschitz equivalent to $f$.
If $(G,\pp)$ satisfies an intrinsic 
\tfi\  with respect to $f$, then after adding
all relators of length up to a sufficiently 
large constant to the 
presentation $\pp'$, 
the pair $(H,\pp')$
satisfies an intrinsic
\tfi\  with respect to a function
that is Lipschitz equivalent to $f$.
\end{theorem}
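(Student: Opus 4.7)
Fix a $(\lambda,C)$-quasi-isometry $\phi:G\to H$ with quasi-inverse $\psi:H\to G$, and denote the generating sets of $\pp,\pp'$ by $A,B$ respectively. A standard argument using $\psi$ produces, for any word $w\in B^*$ of length $n$ representing $\ep$ in $H$, a word $w^*\in A^*$ of length at most $\lambda n+C$ representing $\ep$ in $G$, translated via a finite collection of bounded-length $A^*$-words indexed by $B$. The plan is to start from a van Kampen diagram $\dd^*$ for $w^*$ together with an $f$-tame homotopy $\ps^*$ witnessing the hypothesis of the theorem in $(G,\pp)$, and to thicken $(\dd^*,\ps^*)$ through $\phi$ to produce a van Kampen diagram $\dd$ for $w$ with a homotopy $\ps$ on $\dd$. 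The thickening replaces each vertex $v$ of $\dd^*$ by $\phi(\pi_{\dd^*}(v))\in H$, each edge of $\dd^*$ by a path in the Cayley graph of $H$ of bounded length $N_0$ representing the same group element, and each 2-cell of $\dd^*$ (with boundary relator $r\in R$) by a subdiagram in $H$ whose boundary has length at most $N:=N_0\cdot\max_{r\in R}l(r)$; to guarantee that such subdiagrams exist, one adds to $\pp'$ every $H$-relator of length at most $N$, which is the ``sufficiently large constant'' in the statement.

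For the extrinsic case, Proposition~\ref{prop:etistc} translates the hypothesis into an equivalent radial \tci\ for $(G,\pp)$, and the argument of~\cite[Theorem~A]{hmeiermeastame}, applied to the \dia\ 1-combing of the Cayley complex of $H$ induced by the thickening, yields a radial \tci\ for $(H,\pp')$ with respect to a function Lipschitz equivalent to $f$; the only modification needed is a routine verification that the coarse distance of Definition~\ref{def:td} obeys the same QI transformation bounds as the level-based coarse distance of~\cite{hmeiermeastame}. Reapplying Proposition~\ref{prop:etistc} then recovers the desired extrinsic \tfi. The extrinsic argument goes through cleanly because extrinsic distance, being measured in the Cayley complex of $H$, is insensitive to the internal structure of $\dd$ and transforms directly under $\phi$.

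The main obstacle is the intrinsic case, which requires a two-sided comparison $\td_\dd(*,\hat v)\asymp\td_{\dd^*}(*,v)$ of coarse distances between each vertex $v$ of $\dd^*$ and its thickened image $\hat v$ in $\dd$. The upper bound $\td_\dd(*,\hat v)\le N_0\td_{\dd^*}(*,v)+N_0$ is automatic from the construction, since any edge path in $\dd^*$ thickens to a path in $\dd$ at most $N_0$ times longer. The reverse bound $\td_{\dd^*}(*,v)\le\lambda'\td_\dd(*,\hat v)+C'$ is the delicate step and forces careful control over how cells are filled: each subdiagram chosen to replace a thickened 2-cell must have intrinsic diameter uniformly bounded by a constant depending only on $N$, so that any shortest path in $\dd$ from $*$ to $\hat v$ can be projected cellwise onto the 1-skeleton of the thickening with only bounded multiplicative inflation. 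Granting this comparison, $f$-tameness of $\ps^*$ transfers to $g$-tameness of $\ps$ with $g(n)=N_0 f(\lambda'n+C')+N_0$, Lipschitz equivalent to $f$, completing the intrinsic case.
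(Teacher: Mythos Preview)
Your overall architecture matches the paper's: translate $w\in B^*$ to $w^*\in A^*$ via $\psi$, take the $f$-tame pair $(\dd^*,\ps^*)$ in $G$, and push it forward through $\phi$ to a diagram in $H$ over an enlarged presentation. The extrinsic case, handled by citing Proposition~\ref{prop:etistc} and~\cite[Theorem~A]{hmeiermeastame}, is exactly what the paper recommends.

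There is, however, a genuine gap in the intrinsic argument. The thickening you describe, replacing each vertex $v$ of $\dd^*$ by $\phi(\pi_{\dd^*}(v))$, produces a diagram whose boundary word is $\tp(\ep_G,w^*)=\tp(\ep_G,\tth(\ep_H,w))$, not $w$ itself. Since $\phi\circ\psi$ is only within bounded distance of the identity, these words differ. The paper handles this with an explicit ``collar'' of additional $2$-cells glued around the outside (its Step~III), each of bounded boundary length, connecting the thickened boundary to the actual word $w$; these collar cells contribute to the ``sufficiently large constant'' in the statement, beyond your $N=N_0\cdot\max_{r\in R}l(r)$. Without the collar you do not yet have a van Kampen diagram for $w$. (Relatedly, your phrase ``a finite collection of bounded-length $A^*$-words indexed by $B$'' suggests a position-independent translation $b\mapsto a_b$; since $\psi$ is not a homomorphism, the translation must depend on the prefix, as in the paper's $\tth(h,b)$.)

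Second, you have located the delicacy in the wrong place. Once you enlarge $\pp'$ by all relators of length $\le N$, each thickened $2$-cell is filled by a \emph{single} $2$-cell, and the $1$-skeleton of $\dd$ is exactly the subdivided $1$-skeleton of $\dd^*$; the reverse bound $\td_{\dd^*}(*,v)\le\td_{\dd}(*,\hat v)$ is then immediate, not delicate. The genuine obstruction—which the paper states explicitly after its Step~IV—arises only if one tries to return to the \emph{original} $\pp'$ by replacing each large $2$-cell with a multi-cell van Kampen subdiagram. Such subdiagrams need not be disks; they can identify boundary vertices, and these identifications can collapse intrinsic distances in $\dd$ by an unbounded factor regardless of the subdiagrams' own intrinsic diameters. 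Your proposed fix (``bounded intrinsic diameter of subdiagrams implies bounded multiplicative inflation of projected paths'') does not survive this: a chain of cells each inducing one identification can shrink the diameter of $\dd$ to a constant while $\dd^*$ stays long. This is precisely why the intrinsic conclusion in the theorem is stated only for the enlarged presentation.
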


\begin{proof}
Write the finite presentations $\pp=\langle A \mid R \rangle$
and $\pp'=\langle B \mid S\rangle$; as usual
we assume that these presentations are symmetrized.
Let $X$ be the 2-dimensional Cayley complex for the
pair $(G,\pp)$, and let $Y$ be the Cayley complex associated
to $(H,\pp')$.  
Let $d_X$, $d_Y$ be the path metrics in $X$ and $Y$
(and hence also the word metrics in $G$ and $H$ with respect to the 
generating sets $A$ and $B$), respectively.
%Let $\maxr:=\max\{l(r) \mid r \in R\}$ be the length
%of the longest relator in $R$.
%and $\maxr'$ be the lengths of the longest
%relators in the presentations $\pp$ and $\pp'$, respectively.
%and let $\maxp:=\maxr+1$ and $\maxp':=\maxr'+1$.
  
Quasi-isometry of these groups means that there are
functions $\phi:G \ra H$ and $\theta:H \ra G$ 
and a constant $k>1$ such that for all $g_1,g_2 \in G$
and $h_1,h_2 \in H$, we have
\begin{enumerate}
\item $\frac{1}{k}d_X(g_1,g_2)-k \le d_Y(\phi(g_1),\phi(g_2)) 
          \le kd_X(g_1,g_2)+k$
\item $\frac{1}{k}d_Y(h_1,h_2)-k \le d_X(\theta(h_1),\theta(h_2)) 
              \le kd_Y(h_1,h_2)+k$
\item $d_X(g_1,\theta \circ \phi(g_1)) \le k$
\item $d_Y(h_1,\phi \circ \theta(h_1)) \le k$
\end{enumerate}
By possibly increasing the constant $k$, we may also
assume that $k>2$ and that
$\phi(\ep_G)=\ep_H$ and $\theta(\ep_H)=\ep_G$,
where $\ep_G$ and $\ep_H$ are the identity elements of 
the groups $G$ and $H$, respectively.

We extend the functions $\phi$ and $\theta$ to functions
$\tp:G \times A^* \ra B^*$ and $\tth:H \times B^* \ra A^*$
as follows.  
Let $\tilde A \subset A$ be a subset containing 
exactly one element for each inverse pair $a,a^{-1} \in A$. 
Given a pair $(g,a) \in G \times \tilde A$, using property (1) above
we let
$\tp(g,a)$ be (a choice of) a nonempty word 
of length at most $2k$ labeling a 
path in the Cayley graph $Y^1$ from the vertex $\phi(g)$ to the
vertex $\phi(ga)$ (in the case that $\phi(g)=\phi(ga)$,
we can choose $\tp(g,a)$ to be the nonempty word 
$bb^{-1}$ for some choice of $b \in B$).
We also define $\tp(g,a^{-1}):=\tp(ga^{-1},a)^{-1}$.
Then for any $w = a_1 \cdots a_m$
with each $a_i \in A$, define $\tp(g,w)$ to be the
concatenation
$\tp(g,w) := \tp(g,a_1) \cdots \tp(ga_1 \cdots a_{m-1},a_m)$.
Note that for $w \in A^*$:

(5) the word lengths satisfy
$l(w) \le l(\tp(g,w)) \le 2kl(w)$, and

(6) the word $\tp(\ep_G,w)$ 
represents the element $\phi(w)$ in $H$.

\noindent The function $\tth$ is defined analogously.

Using Propositions~\ref{prop:htpydomain} and~\ref{prop:etistc}, we will
prove this theorem using relaxed \tfs\ 
via disk homotopies.
For the group $G$ with presentation $\pp$, fix a collection 
${\cld} = \{(\dd_w,\ph_w) \mid w \in A^*, w=_G \ep_G\}$
of van Kampen diagrams and associated disk homotopies,
such that all of the $\ph_w$ are intrinsically $f^i$-tame or 
all $\ph_w$ are extrinsically $f^e$-tame, where $f^i,f^e:\nn \ra \nn$
are nondecreasing functions.

\smallskip

\noindent {\it Case A.  Suppose that $G$ is a finite group.}

In this case, $H$ is also finite.
Let ${\mathcal F}$ be a (finite) collection 
of van Kampen diagrams over $\pp'$, one for each
word over $B$ of length at most $|H|$ that
represents $\ep_H$.
Now given any word $u$ over $B$ with $u=_H \ep_H$,
we will construct a van Kampen diagram for $u$
with intrinsic diameter at most 
$|H|+\max\{idiam(\dd) \mid \dd \in {\mathcal F}\}$,
as follows.  
%Write $w=b_1 \cdots b_n$ with each$b_i \in B$.  
Start with a planar 1-complex that
is a line segment consisting of an edge path
labeled by the word $u$ starting at a basepoint $*$; 
that is, we start with a van Kampen diagram for
the word $uu^{-1}$.
Write $u=u_1'u_1''u_1'''$ where $u_1''=_H \ep_H$
and no proper prefix of $u_1'u_1''$ contains a subword 
that represents $\ep_H$.
Note that $l(u_1'u_1'') \le |H|$.
We identify the vertices in the
van Kampen diagram at the start and end of the
boundary path labeled $u_1''$,
and fill in this loop with the van Kampen
diagram from ${\mathcal F}$ for this word.
We now have a van Kampen diagram for the
word $uu_1^{-1}$ where 
$u_1:=u_1'u_1'''$.
We then begin again, and write $u_1=u_2'u_2''u_2'''$
where $u_2''=_H \ep_H$ and no proper prefix of
$u_2'u_2''$ contains a subword representing the
identity.  Again we identify the vertices at
the start and end of the word $u_2''$ in the
boundary of the diagram, and fill in this
loop with the diagram from ${\mathcal F}$ for
this word, to obtain a van Kampen diagram 
for the word $uu_2^{-1}$ where $u_2:=u_2'u_2'''$.
Repeating this process, since at each step
the length of $u_i$ strictly decreases, we
eventually obtain a word $u_k=u_k''$.  Identifying
the endpoints of this word and filling in
the resulting loop with the van Kampen diagram
in ${\mathcal F}$ yields a van Kampen diagram $\dd_u'$
for $u$.  
See Figure~\ref{fig:finitegp} for an illustration of
this procedure.
\begin{figure}
\begin{center}
\includegraphics[width=3.4in,height=1.4in]{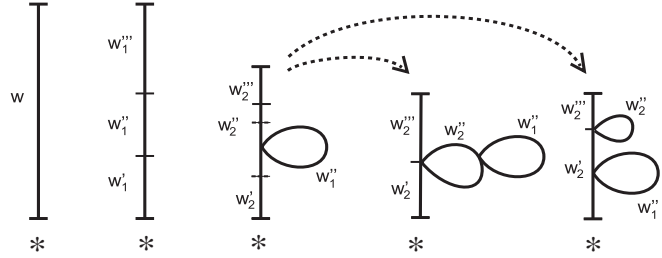}
\caption{Building $\dd_u'$ in the finite group case}\label{fig:finitegp}
\end{center}
\end{figure}
At each step, the maximum distance
from the basepoint $*$ to any vertex in
the van Kampen diagram included from ${\mathcal F}$
is at most $|H|+\max\{idiam(\dd) \mid \dd \in {\mathcal F}\}$,
because this subdiagram is attached at 
the endpoint of a path starting at $*$ and
labeled by the word $u_i'$ of length
less than $|H|$.
At the end of this process, every vertex of the
final diagram lies on one of these subdiagrams.
Hence we obtain the required intrinsic diameter
bound.

Let $\Phi_u'$ be any disk homotopy of
the diagram $\dd_u'$.  Then the 
collection $\{(\dd_u',\Phi_u')\}$
of van Kampen diagrams and disk homotopies $H$ over $\pp'$
satisfies the property that each
homotopy $\Phi_u'$ is intrinsically $f$-tame
for the constant function 
$f(n) \equiv |H|+\max\{idiam(\dd) \mid \dd \in {\mathcal F}\}+\frac{1}{2}$,
since this constant is an upper bound for the
coarse distance from the basepoint to every point of $\dd_u'$.
Hence $H$ also satisfies an intrinsic relaxed \tfi\ 
with respect to the function $f^i+f$, which is
Lipschitz equivalent to $f^i$.  

Similarly, since the extrinsic diameter of
every van Kampen diagram in this collection (or, 
indeed, any other van Kampen diagram) is at
most $|H|$, the pair $(H,\pp')$ satisfies an
extrinsic \tfi\ for the constant function $|H|+\frac{1}{2}$,
and so also satisfies an extrinsic relaxed \tfi\ for
the function $n \ra f^e(n)+|H|+\frac{1}{2}$.

\smallskip

\noindent {\it Case B.  Suppose that $G$ is an infinite group.}

The group $H$ is also infinite, and so
the functions $f^i$ and $f^e$ must grow at
least linearly, in this case.
In particular, we have
 $f^i(n) \ge n-\maxr-1$ and $f^e(n) \ge n-\maxr-1$
for all $n \in \nn$, where $\maxr=\max\{l(r) \mid r \in R\}$
is the maximum length of a relator in the presentation $\pp$.
%see my notes p. 269.

Now suppose that $u'$ is any word in $B^*$
with $u' =_H \ep_H$.  We will construct
a van Kampen diagram for $u'$,
following the method of
\cite[Theorem 9.1]{bridsonriley}.  At each of the four 
successive steps,
we obtain a van Kampen diagram for a specific word; 
we will also keep track of homotopies and analyze
their tameness, in order to
finish with a diagram and disk homotopy 
for $u'$.

\smallskip

{\em Step I.  For $u:=\tth(\ep_H,u')\in A^*$:} 
%Writing $u'=b_1 \cdots b_n$ with each $b_i \in B$,
%then $u'$ labels a circuit 
%$\tht(\ep_H,b_1) \cdots \tht(b_1 \cdots b_{n-1},b_n)$ starting
%and ending a $\ep_G$ in $\xx$.
Note (6) implies that the word 
$u=_G \theta(u')=_G\theta(\ep_H)=_G\ep_G$, and
so the collection ${\cld}$ contains a
van Kampen diagram $\dd_u$ for $u$ and an associated
disk homotopy 
$\ph_u:C_{l(u)} \times [0,1] \ra \dd_u$.
Note that $\ph_u$ is intrinsically $f_1^i:=f^i$-tame
or extrinsically $f_1^e:=f^e$-tame.

\smallskip

{\em Step II.  For $z'':=\tp(\ep_G,u)=\tp(\ep_G,\tth(\ep_H,u')) \in B^*$:}  
We build a finite, planar, contractible, 
combinatorial 2-complex $\Omega$ from
$\dd_u$ as follows.
As usual, let $\pi_{\dd_u}: \dd_u \ra X$
be the canonical map taking the basepoint $*$ of
$\dd_u$ to $\ep_G$.
Given any edge $e$ in $\dd_u$, choose
a direction, and hence a label $a_e$, for $e$,
and let $v_1$ be the initial
vertex of $e$.
Replace $e$ with a directed edge path
$\hat e$ labeled by the (nonempty) word
$\tp(\pi_{\dd_u}(v_1),a_e)$.  Repeating 
this for every edge of the complex $\dd_u$
results in the 2-complex $\Omega$.

Note that 
$\Omega$ is a 
van Kampen diagram for 
the word $z'':=\tp(\ep_G,u)=\tp(\ep_G,\tth(\ep_H,u')) \in B^*$
with respect to the presentation
$\pp''=\langle B \mid S \cup S'' \rangle$ of $H$,
where $S''$ is the set of all nonempty words over
$B$ of length at most 
$2k \maxr$ that represent $\ep_H$.
%Let $\maxp''=2k\maxp$, and note that $\maxp''$
%is at least the length of the longest
%relator in $\pp''$.
%Also, l
Let $Y''$ be the Cayley complex for $\pp''$
and as usual, let $\pi_\Omega:\Omega \ra Y''$ be the canonical map.

Using the fact that the only difference between $\dd_u$ and $\Omega$
is a replacement of edges by edge paths, we 
define $\alpha:\dd_u \ra \Omega$ to be the continuous map
taking each vertex and each interior point of
a 2-cell of $\dd_u$ to the same point of $\Omega$,
and taking each edge $e$ to the corresponding edge path $\hat e$.

Writing $u=a_1 \cdots a_m$ with each $a_i \in A$,
then $z''=c_{1,1} \cdots c_{1,j_1} \cdots c_{m,1} \cdots c_{m,j_m}$
where each $c_{i,j} \in B$ and $c_{i,1} \cdots c_{i,j_i}$ is
the nonempty word labeling the edge path $\widehat{e_i}$
of $\bo \Omega$ that is the image under $\alpha$ of the $i$-th edge
of the boundary path of $\dd_u$.
Recall that $C_{l(u)}$ is the circle $S^1$ with a 
1-complex structure of $l(u)$ vertices and edges.
Let the 1-complex $C_{l(z'')}$ be a refinement of the
complex $C_{l(u)}$, so that the $i$-th edge of 
$C_{l(u)}$ is replaced by $j_i \ge 1$ edges for each $i$, and
let $\hat \alpha:C_{l(z'')} \ra C_{l(u)}$ be the identity
on the underlying circle.
%\coment{Here is where the problem occurs
%with allowing some of the edge paths
%$\hat e$ to be constant paths, and so
%edges of $\dd_u$ are replaced by a vertex
%(ie gluing the endpoints of $e$) in $\Omega$ -
%hence my choice of nonempty words in the
%definition of $\tilde \phi$.}
Finally, define the map 
$\omega:C_{l(z'')} \times [0,1] \ra \Omega$ by
$\omega:=\alpha \circ \ph_u \circ (\hat \alpha \times id_{[0,1]})$.
%$\omega(p,t):=  \alpha(\ph_u(\hat \alpha(p),t))
This map $\omega$
satisfies conditions (d1)-(d2) of the definition
of disk homotopy.

Next we analyze the intrinsic tameness of $\omega$.  
Again since in this
step we have only replaced edges by nonempty edge
paths of length at most $2k$, for each vertex $v$ in 
$\dd_u$ we have
$
\td_{\dd_u}(*,v) 
\le \td_{\Omega}(*,\alpha(v)) 
\le 2k\td_{\dd_u}(*,v)~.
$
For a point $q$ in the interior of an edge
of $\dd_u$, let $v$ be a vertex
in the same closed cell; then 
$|\td_{\dd_u}(*,q)-\td_{\dd_u}(*,v)| <1$
and $|\td_{\Omega}(*,\alpha(q))-\td_{\Omega}(*,\alpha(v))| < 2k$.
%in this case, 
%\[
%\td_{\dd_u}(*,q) 
%\le \td_{\Omega}(*,\alpha(q)) 
%\le \td_{\Omega}(*,\alpha(v))+2k 
%\le 2k\td_{\dd_u}(*,v)+2k 
%\le 2k(\td_{\dd_u}(*,q)+1)+2k~.
%\]
For a point $q$ in the interior of a 2-cell of $\dd_u$,
let $v$ be a vertex in the closure of this cell with 
$\td_{\dd_u}(*,v) \le \td_{\dd_u}(*,q)+1$.  Then $\alpha(v)$
is a vertex in the closure of the open 2-cell of $\Omega$
containing $\alpha(q)$, and the boundary path of this
cell has length at most $2k\maxr$.
That is, 
$|\td_{\dd_u}(*,q)-\td_{\dd_u}(*,v)| <1$
and $|\td_{\Omega}(*,\alpha(q))-\td_{\Omega}(*,\alpha(v))| < 2k\maxr$.
%Hence 
%\[
%\td_{\dd_u}(*,q) 
%\le \td_{\Omega}(*,\alpha(q)) 
%\le \td_{\Omega}(*,\alpha(v))+\maxp''
%\le  2k\td_{\dd_u}(*,v)+2k\maxp
%\le 2k(\td_{\dd_u}(*,q)+1)+2k\maxp~.
%\]
Thus for all $q\in \dd_u$, we have $\td_{\dd_u}(*,q) 
\le \td_{\Omega}(*,\alpha(q)) \le 2k \td_{\dd_u}(*,q) +4k+2k\maxr$.

Now suppose that $p$ is any point in $C_{l(z'')}$ and 
$0 \le s < t \le 1$.  Combining the inequalities above
with the $f_1^i$-tame property of $\ph_u$ and the fact
that $f_1^i$ is nondecreasing yields
%\hspace{-0.4in} $\td_\Omega(*,\omega(p,s)) = 
%          \td_\Omega(*,\alpha(\ph_u(\hat \alpha(p),s))$
%\hspace{-0.3in} $\le 2k \td_{\dd_u}(*,\ph_u(\hat \alpha(p),s)) +4k+2k\maxp
%  \le  2k f_1^i(\td_{\dd_u}(*,\ph_u(\hat \alpha(p),t)))+4k+2k\maxp$
%\hspace{-0.3in} $\le  2k f_1^i(\td_{\Omega}(*,\alpha(\ph_u(\hat \alpha(p),t))))
%                 +4k+2k\maxp
%       = 2k f_1^i(\td_{\Omega}(*,\omega(p,t))+4k+2k\maxp~$.
\begin{eqnarray*}
\td_\Omega(*,\omega(p,s)) &=& \td_\Omega(*,\alpha(\ph_u(\hat \alpha(p),s))\\
         &\le& 2k \td_{\dd_u}(*,\ph_u(\hat \alpha(p),s)) +4k+2k\maxr\\
  & \le & 2k f_1^i(\td_{\dd_u}(*,\ph_u(\hat \alpha(p),t)))+4k+2k\maxr \\
  &\le & 2k f_1^i(\td_{\Omega}(*,\alpha(\ph_u(\hat \alpha(p),t))))+4k+2k\maxr\\
      & =& 2k f_1^i(\td_{\Omega}(*,\omega(p,t))+4k+2k\maxr~.
\end{eqnarray*}
Hence $\omega$ is intrinsically $f_2^i$-tame for the nondecreasing function 
$f_2^i(n):=2k f_1^i(n)+4k+2k\maxr$.

In the last part of Step II, we analyze the extrinsic tameness
of $\omega$.  
For any vertex $v$ in $\dd_u$,  let $w_v$ 
%$w_v=a_1 \cdots a_m$ (with each $a_i \in A$) 
be a word labeling
a path in $\dd_u$ from $*$ to $v$.  
%Then $\tp(\ep_G,a_1)\cdot \tp(a_1,
Using note (6) above,
we have 
$\phi(\pi_{\dd_u}(v)) =_H 
\phi(w_v) =_H \tp(\ep_G,w_v)=
\pi_\Omega(\alpha(v))$, by our
construction of $\Omega$. 
The quasi-isometry property (1) then gives
\[
\frac{1}{k}d_X(\ep_G,\pi_{\dd_u}(v))-k 
  \le d_Y(\ep_H,\phi(\pi_{\dd_u}(v))) 
    = d_Y(\ep_H,\pi_\Omega(\alpha(v)))
  \le kd_X(\ep_G,\pi_{\dd_u}(v))+k~.
\]
Since the generating sets of the presentations $\pp'$ and
$\pp''$ of $H$ are the same, the Cayley graphs 
and their path metrics $d_Y=d_{Y''}$ are also the same.  
% and
%$\td_Y=\td_{Y''}$ on the Cayley graphs.
%We note that 
As in the intrinsic case above,
for a point $q$ in the interior of an edge or 2-cell of $\dd_u$,
there is a vertex $v$ in the same closed cell with
$|\td_X(\ep_G,\pi_{\dd_u}(q))-\td_X(\ep_G,\pi_{\dd_u}(v))|<1$ and
$|\td_{Y''}(\ep_G,\pi_\Omega(\alpha(q)))-
         \td_{Y''}(\ep_G,\pi_\Omega(\alpha(v)))|<2k(\maxr+1)$.
%\begin{eqnarray*}
%\td_X(\ep_G,\pi_{\dd_u}(q)) 
%&\le& \td_X(\ep_G,\pi_{\dd_u}(v))+1
%  \le k\td_{Y''}(\ep_H,\pi_\Omega(\alpha(v)))+k^2+1\\
%&  \le& k\td_{Y''}(\ep_H,\pi_\Omega(\alpha(q))+2k)+k^2+1~,
%\hspace{0.2in} \text{ and}\\
%%\end{eqnarray*}
%%\begin{eqnarray*}
%\td_{Y''}(\ep_H,\pi_\Omega(\alpha(q)))
%     &\le& \td_{Y''}(\ep_H,\pi_\Omega(\alpha(v)))+2k
%  \le k\td_X(\ep_G,\pi_{\dd_u}(v))+3k\\
% &\le& k(\td_X(\ep_G,\pi_{\dd_u}(q))+1)+3k~.
%\end{eqnarray*}
%For a point $q$ in the interior of a 2-cell of $\dd_u$
%with boundary vertex $v$ within a coarse distance 1 from $q$, then
%\begin{eqnarray*}
%\td_X(\ep_G,\pi_{\dd_u}(q)) 
%&\le& \td_X(\ep_G,\pi_{\dd_u}(v))+1
%  \le k\td_{Y''}(\ep_H,\pi_\Omega(\alpha(v)))+k^2+1\\
%&  \le& k\td_{Y''}(\ep_H,\pi_\Omega(\alpha(q))+\maxp'')+k^2+1~,
%\hspace{0.2in} \text{ and}\\
%%\end{eqnarray*}
%%\begin{eqnarray*}
%\td_{Y''}(\ep_H,\pi_\Omega(\alpha(q)))
%     &\le& \td_{Y''}(\ep_H,\pi_\Omega(\alpha(v)))+\maxp''
%  \le k\td_X(\ep_G,\pi_{\dd_u}(v))+k+2k\maxp\\
% &\le& k(\td_X(\ep_G,\pi_{\dd_u}(q))+1)+k+2k\maxp~.
%\end{eqnarray*}
Then for all $q \in \dd_u$, we have
\begin{eqnarray*}
\td_X(\ep_G,\pi_{\dd_u}(q))
&\le&  k\td_{Y''}(\ep_H,\pi_\Omega(\alpha(q)))+2k^2\maxr+3k^2+1~,
\text{ and }\\
\td_{Y''}(\ep_H,\pi_\Omega(\alpha(q)))
&\le&  k\td_X(\ep_G,\pi_{\dd_u}(q))+4k+2k\maxr~.
\end{eqnarray*}

Now suppose that $p$ is any point in $C_{l(z'')}$ and 
$0 \le s < t \le 1$. Then
\begin{eqnarray*}
\td_{Y''}(\ep_H,\pi_\Omega(\omega(p,s))) 
   &=& \td_{Y''}(\ep_H,\pi_\Omega(\alpha(\ph_u(\hat \alpha(p),s)))) \\
     & \le&  k \td_{X}(\ep_G,\pi_{\dd_u}(\ph_u(\hat \alpha(p),s))) 
                +4k+2k\maxr \\
  & \le & k f_1^e(\td_{X}(\ep_G,\pi_{\dd_u}(\ph_u(\hat \alpha(p),t))))
                  +4k+2k\maxr\\
 &\le & k f_1^e(k\td_{Y''}(\ep_H,\pi_\Omega(\alpha(\ph_u(\hat \alpha(p),t))))
                     +2k^2\maxr+3k^2+1)+4k+2k\maxr\\
       &=& k f_1^e(k\td_{Y''}(\ep_H,\pi_\Omega(\omega(p,t))+2k^2\maxr+3k^2+1)
              +4k+2k\maxr~.
\end{eqnarray*}
Hence $\omega$ is extrinsically $f_2^e$-tame for the nondecreasing function 
$f_2^e(n):=k f_1^e(kn+2k^2\maxr+3k^2+1)+4k+2k\maxr$.

\smallskip

{\em Step III. For $u'$ over $\pp'''$:}  In this step we
construct another finite, planar, contractible,
and combinatorial 2-complex $\Lambda_{u'}$ starting from $\Omega$,
by adding a ``collar'' around the outside boundary.
Write the word $u'=b_1 \cdots b_n$ with each $b_i \in B$.
For each $1 \le i \le n-1$, let $w_i$ be
a word labeling a geodesic edge path in
$Y$ from $\phi(\theta(b_1 \cdots b_i))$
to $b_1 \cdots b_i$;
the quasi-isometry inequality in (3) above
implies that the length of $w_i$ is at most $k$. 
%If $w_i$ is not the empty word, 
We add to $\Lambda_{u'}$ a vertex $x_i$ and the vertices
and edges of a directed edge path $p_i$
labeled by $w_i$
from the vertex ${v_i}$ to $x_i$, 
where $v_i$ is the 
vertex in $\bo \dd_u$ at the end of the path
$\tp(\ep_G,\tth(e_H,b_1 \cdots b_i))$ starting at the basepoint.
Note that if $w_i$ is the empty word, we identify
$x_i$ with the vertex ${v_i}$; the path $p_i$
is a constant path at this vertex.
Then $*={v_0}=x_0=x_n$
(and $p_0$ and $p_n$ are the constant path at
this vertex); let this vertex
be the basepoint of $\Lambda_{u'}$.

Next we add to $\Lambda_{u'}$ a directed edge 
$\check e_i$ labeled
by $b_i$ from the vertex $x_{i-1}$ to the vertex $x_i$.
The path $q_i$ from $v_{i-1}$ to $v_i$
along the boundary of 
the subcomplex $\Omega$ is labeled by the nonempty word
$z_i:=\tp(\theta(b_1 \cdots b_{i-1}),\tth(b_1 \cdots b_{i-1},b_i))$.
If both of the paths $p_{i-1},p_i$  are constant
and the label of path $q_i$ is the single letter $b_i$,
then we identify the edge $\check e_i$ with the path $q_i$.
Otherwise, we 
attach a 2-cell $\check \sigma_i$ along the edge circuit
following the edge path starting at $\hat v_{i-1}$ 
that traverses the path
$q_i$,
the path $p_i$, the reverse of the edge $\check e_i$, 
and finally the reverse of
the path $p_{i-1}$. 
See Figure~\ref{fig:qicollar} for a picture of the resulting
diagram.
\begin{figure}
\begin{center}
\includegraphics[width=3.4in,height=1.4in]{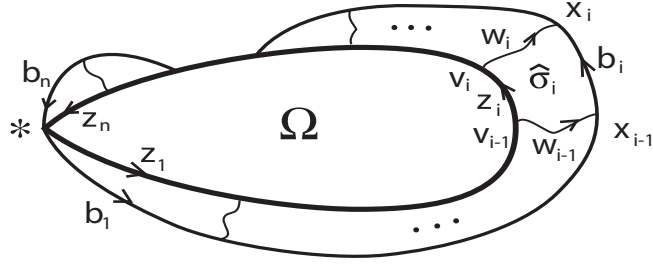}
\caption{The van Kampen diagram $\Lambda_{u'}$}\label{fig:qicollar}
\end{center}
\end{figure}

Now the complex $\Lambda_{u'}$ is a van Kampen
diagram for the original word $u'$,
with respect to the presentation
$\pp'''=\langle B \mid S \cup S''' \rangle$ of $H$,
where $S'''$ is the set of all nonempty words in
$B^*$ of length at most 
$\maxr''':=2k\maxr+(2k)^2+2k+1$ that
represent $\ep_H$.
(Note that the presentation $\langle B \mid S'''\rangle$
also presents $H$, and $\Lambda_{u'}$ is also
a diagram over this more restricted presentation.)
Let 
%$\maxp''':=1+L$ and let
$Y'''$ be the corresponding Cayley complex.

We define a disk homotopy 
$\lambda_{u'}:C_{l(u')} \times [0,1] \ra \Lambda_{u'}$
by extending the paths of the
homotopy $\omega$ on the subcomplex $\Omega$
as follows.
First we let the cell complex $C_{l(u')}$ be the complex
$C_{l(z''))}$ with each subpath in $C_{l(z''))}$ mapping
to a path $q_i$ in $\bo \Omega$ replaced by a single edge.
From our definitions of $\tp$ and $\tth$, each $q_i$ path
is labeled by a nonempty word, and so
$C_{l(z''))}$ is a refinement of the complex structure
$C_{l(u'))}$ on $S^1$, and we let 
$\hat \beta:C_{l(u'))} \ra C_{l(z''))}$ be the identity
on the underlying circle.
Next define a homotopy 
$\tilde \lambda:C_{l(z'')} \times [0,1] \ra \Lambda_{u'}$
as follows.
For each $1 \le i \le n$, let $\tilde v_i$ be the point in $S^1$
mapped by $\omega$ to $v_i$.
Define $\tilde \lambda(\tilde v_i,t):=\omega(\tilde v_i,2t)$
for $t \in [0,\frac{1}{2}]$, and let $\tilde \lambda(\tilde v_i,t)$
for $t \in [\frac{1}{2},1]$ be a constant speed path
along $p_i$ from $v_i$ to $x_i$.
On the interior of the edge $\tilde e_i$ from $\tilde v_{i-1}$
to $\tilde v_i$, 
define
the homotopy $\tilde \lambda|_{\tilde e_i \times [0,\frac{1}{2}]}$
to follow $\omega|_{\tilde e_i \times [0,1]}$ at double speed,
and let $\tilde \lambda|_{\tilde e_i \times [\frac{1}{2},1]}$
go through the 2-cell $\check \sigma_i$ (or, if there is 
no such cell, let this portion of $\tilde \lambda$ be constant)
from $q_i$ to $\check e_i$.
Finally, we define the homotopy 
$\lambda_{u'}:C_{l(u'))} \times [0,1] \ra \Lambda_{u'}$ by 
$\lambda_{u'}:=\tilde \lambda \circ (\hat \beta \times id_{[0,1]})$.
This map $\lambda_{u'}$ is a disk
homotopy for the diagram $\Lambda_{u'}$.

%For each $1 \le i \le n$,
%define $\lambda_{u'}(x_i,t):=\omega({v_i},2t)$
%for $t \in [0,\frac{1}{2}]$, and let $\lambda_{u'}(x_i,t)$
%for $t \in [\frac{1}{2},1]$ be a constant speed path
%along $p_i$ from $v_i$ to $x_i$.
%On the interior of the edge $\check e_i$, 
%define
%the homotopy $\lambda_{u'}|_{\check e_i \times [0,\frac{1}{2}]}$
%to follow $\omega|_{q_i \times [0,1]}$ at double speed,
%and let $\lambda_{u'}|_{\check e_i \times [\frac{1}{2},1]}$
%go through the 2-cell $\check \sigma_i$ (or, if there is 
%no such cell, let this portion of $\lambda_{u'}$ be constant)
%to $\check e_i$.
%This map $\lambda_{u'}$ is a disk
%homotopy for the diagram $\Lambda_{u'}$.

Next we analyze the intrinsic tameness of $\lambda_{u'}$.
Since $\Omega$ is a subdiagram of $\Lambda_{u'}$, 
for any vertex $v$ in $\Omega$, we have 
$d_{\Lambda_{u'}}(*,v) \le d_\Omega(*,v)$.  
Given any edge path $\beta$ in $\Lambda_{u'}$
from $\ep$ to $q$ that is not completely contained
in the subdiagram $\Omega$, the subpaths of
$\beta$ lying in the ``collar'' can be replaced
by paths along $\bo \Omega$ of length at most
a factor of $4k^2$ longer.  Then
$d_\Omega(*,v) \le 4k^2d_{\Lambda_{u'}}(*,v)$. 
Hence for any point $q \in \Omega$, we have
$\td_{\Lambda_{u'}}(*,q) \le \td_\Omega(*,q) 
\le 4k^2\td_{\Lambda_{u'}}(*,q)+4k^2+1+\maxr'''$.

Now suppose that $p$ is any point of $C_{l(u')}$ and
$0 \le s < t \le 1$.
If $t \le \frac{1}{2}$, then the path 
$\lambda_{u'}(p,\cdot)$ on $[0,t]$ is a reparametrization of
$\omega(p,\cdot)$, and so Step II, the 
fact that $f_2^i$ is nondecreasing,  and the inequalities above
give 
\begin{eqnarray*}
\td_{\Lambda_{u'}}(*,\lambda_{u'}(p,s)) 
&\le& \td_{\Omega}(*,\lambda_{u'}(p,s)) \\
&\le& f_2^i(\td_{\Omega}(*,\lambda_{u'}(p,t)))\\
&\le& f_2^i(4k^2\td_{\Lambda_{u'}}(*,\lambda_{u'}(p,t))+4k^2+1+\maxr''')
\end{eqnarray*}
If $t > \frac{1}{2}$ and $s \le \frac{1}{2}$, then we have 
$ \td_{\Lambda_{u'}}(*,\lambda_{u'}(p,s)) \le 
f_2^i(4k^2\td_{\Lambda_{u'}}(*,\lambda_{u'}(p,\frac{1}{2}))+4k^2+1+\maxr''')$
and
$|\td_{\Lambda_{u'}}(*,\lambda_{u'}(p,t))-
     \td_{\Lambda_{u'}}(*,\lambda_{u'}(p,\frac{1}{2}))|<\maxr'''+1$, so 
$$ \td_{\Lambda_{u'}}(*,\lambda_{u'}(p,s)) \le 
f_2^i(4k^2(\td_{\Lambda_{u'}}(*,\lambda_{u'}(p,t))+\maxr'''+1)+
          4k^2+1+\maxr''').$$
If $s>\frac{1}{2}$, then 
\begin{eqnarray*}
\td_{\Lambda_{u'}}(*,\lambda_{u'}(p,s)) &\le& 
\td_{\Lambda_{u'}}(*,\lambda_{u'}(p,t)) + \maxr'''+1\\
&\le& f_2^i(\td_{\Lambda_{u'}}(*,\lambda_{u'}(p,t)))+\maxr'''+1,
\end{eqnarray*} 
where the latter inequality follows from the
fact that $n \le f^i(n) +\maxr+1 \le f_2^i(n)$ for this
infinite group case.
%Then for all possibilities for $t$, we have
Then $\lambda_{u'}$ is intrinsically $f_3^i$-tame for the function
$f_3^i(n):=f_2^i(4k^2n+8k^2+1+(4k^2+1)\maxr''')$.

We note that we have now
completed the proof of Theorem~\ref{thm:itisqi} in
the intrinsic case:  
The collection
$\{(\Lambda_{u'},\lambda_{u'}) \mid u' \in B^*, u'=_H\ep_H\}$
of van Kampen diagrams and disk homotopies over the
presentation $\pp'''=\langle B \mid S''' \rangle$
implies an intrinsic relaxed \tfi\ for the
function $f_3^i$, which is Lipschitz equivalent to $f^i$.

The analysis of the extrinsic tameness in 
this step is simplified by the fact that 
for all $q \in \Omega$, we have
$\td_{Y''}(\ep_H,\pi_\Omega(q))=
   \td_{Y'''}(\ep_H,\pi_{\Lambda_{u'}}(q))$, 
since the 1-skeleta of $Y''$ and $Y'''$ are
determined by the generating sets of the 
presentations $\pp''$ and $\pp'''$, which are
the same.  A similar argument to those
above shows that $\lambda_{u'}$ is extrinsically 
$f_3^e$-tame for the function
$f_3^e(n):=f_2^e(n+\maxr'''+1)+\maxr+1$.

{\em Step IV.  For $u'$ over $\pp'$:}  
Finally, we turn to building a van Kampen
diagram $\dd_{u'}'$ for $u'$ over the original
presentation $\pp'$.
For each nonempty word $w$ over $B$ of length at
most $\maxr'''$
% $2k \cdot \max\{l(r) \mid r \in R\}+4k+1$
satisfying $w=_H \ep_H$, let $\dd_w'$ be a
fixed choice of van Kampen diagram for $w$
with respect to the presentation $\pp'$ of $H$, and 
let ${\mathcal F}$ be the (finite) collection of these
diagrams.
%$=\{\dd_w' \mid w \in B^*, w=_H \ep_H, l(w) \le L\}$
A diagram $\dd_{u'}'$ over the presentation $\pp'$ is built by
replacing 2-cells of $\Lambda_{u'}$,
proceeding through the 2-cells of $\Lambda_{u'}$
one at a time.  Let $\tau$ be a 
2-cell of $\Lambda_{u'}$, and 
let $*_\tau$ be a choice of basepoint vertex
in $\bo \tau$.
Let $x$ be the word labeling
the path $\bo \tau$ starting at $*_\tau$ and 
reading counterclockwise.  Since
$l(x) \le L$, there is an associated van Kampen diagram
$\dd_\tau'=\dd_{x}'$ in the collection ${\mathcal F}$.
Note that although $\Lambda_{u'}$
is a combinatorial 2-complex, and so the cell $\tau$
is a polygon, the boundary label $x$
may not be freely or cyclically reduced.  The 
van Kampen diagram $\dd_x'$ may not be a polygon, but
instead a collection of polygons connected by edge
paths, and possibly with edge path ``tendrils''.
We replace the 2-cell $\tau$ with a copy $\dd_\tau'$
of the van Kampen
diagram $\dd_x'$, identifying the boundary
edge labels as needed, obtaining another
planar diagram.  Repeating this
for each 2-cell of of the resulting complex at
each step, results in the
van Kampen diagram $\dd_{u'}'$ for $u'$ with
respect to $\pp'$.  

From the process of constructing $\dd_{u'}'$
from $\Lambda$, for each 2-cell $\tau$ there
is a continuous onto map $\tau \ra \dd_\tau'$
preserving the boundary edge path labeling,
and so there is an induced continuous surjection
$\gamma: \Lambda_{u'} \ra \dd_{u'}'$.  Note that the
boundary edge paths of $\Lambda_{u'}$ and $\dd_{u'}'$
are the same.  
Then the composition
$\ph_{u'}':=\gamma \circ \lambda_{u'}:C_{l(u')} \times [0,1] 
\ra \dd_{u'}'$ is a disk homotopy.

To analyze the extrinsic tameness, we first note
that for all points $\hat q \in \Lambda_{u'}^1$, 
the image $\pi_{\Lambda_{u'}}(\hat q)$ in $Y'''$ and 
the image $\pi_{\dd_{u'}'}(\gamma(\hat q))$ in $Y$ are
the same point in the 1-skeleta $Y^1=(Y''')^1$,
and so $\td_{Y'''}(\ep_H,\pi_{\Lambda_{u'}}(\hat q))=
\td_Y(\pi_{\dd_{u'}'}(\gamma(\hat q)))$.
Let $M:=2\max\{\td_\dd(*,r) \mid \dd \in {\mathcal F}, r \in \dd\}$.

Suppose that $p$ is any point in $C_{l(u')}$ and
$0 \le s<t \le 1$.  If $\lambda_{u'}(p,s) \in \Lambda_{u'}^1$,
then define $s':=s$; otherwise, let $0 \le s'<s$ satisfy
$\lambda_{u'}(p,s') \in \Lambda_{u'}^1$ and 
$\lambda_{u'}(p,(s',s])$ is a subset of a single open
2-cell of $\Lambda_{u'}$.  Similarly, if 
$\lambda_{u'}(p,t) \in \Lambda_{u'}^1$,
then define $t':=t$, and otherwise, let $t < t' \le 1$ satisfy
$\lambda_{u'}(p,t') \in \Lambda_{u'}^1$ and 
$\lambda_{u'}(p,[t,t'))$ is a subset of a single open
2-cell of $\Lambda_{u'}$. 
Then 
\begin{eqnarray*}
\td_Y(\ep_H,\pi_{\dd_{u'}'}(\ph_{u'}(p,s)))
& = &  \td_{Y}(\ep_H,\pi_{\dd_{u'}'}(\gamma(\lambda_{u'}(p,s)))) \\
&\le&  \td_{Y}(\ep_H,\pi_{\dd_{u'}'}(\gamma(\lambda_{u'}(p,s')))) +M \\
&= &   \td_{Y'''}(\ep_H,\pi_{\Lambda_{u'}}(\lambda_{u'}(p,s'))) +M \\
&\le&  f_3^e(\td_{Y'''}(\ep_H,\pi_{\Lambda_{u'}}(\lambda_{u'}(p,t')))) +M \\
&=&    f_3^e(\td_{Y}(\ep_H,\pi_{\dd_{u'}'}(\gamma(\lambda_{u'}(p,t'))))) +M \\
&\le&  f_3^e(\td_{Y}(\ep_H,\pi_{\dd_{u'}'}(\gamma(\lambda_{u'}(p,t))))+M) +M.
\end{eqnarray*}
%Then for all $q \in \dd_{u'}'$, there is a point
%$\gamma(\hat q) \in \gamma(\Lambda_{u'}^1)$ with 
%$|\td_Y(\ep_H,\pi_{\dd_{u'}'}(q))-
%       \td_{Y'''}(\ep_H,\pi_{\Lambda_{u'}}(\hat q))| \le I$.
%Finally, suppose that $p$ is any point in $C_{l(u')}$ and
%$0 \le s<t \le 1$.  Let $0 \le s' \le s$ and $t \le t' \le 1$
%so that $\ph_{u'}'(p,s')=\gamma(\hat \sigma)$ and
%$\ph_{u'}'(p,t')=\gamma(\hat \tau)$ for some 
%$\hat \sigma, \hat \tau \in 
%An argument similar to those above now shows that
Therefore
$\ph_{u'}'$ is extrinsically $f_4^e$-tame, for the
function $f_4(n):=f_4(n+M)+M$.
Since the functions $f_j^e$ and $f_{j+1}^e$ are 
Lipschitz equivalent for all $j$, then $f_4^e$ is
Lipschitz equivalent to $f^e$.  

Now the collection
$\{(\dd_{u'}',\ph_{u'}') \mid u' \in B^*, u'=_H\ep_H\}$
of van Kampen diagrams and disk homotopies yields
an extrinsic relaxed \tfi\ for the pair $(H,\pp')$
with respect to a function that is Lipschitz equivalent to $f^e$.
\end{proof}

The obstruction to applying Step IV of the above
proof in the intrinsic case stems from the fact that
%We note that in Step IV of the above proof, 
the map 
$\gamma:\Lambda_{u'} \ra \dd_{u'}'$ behaves well with
respect to extrinsic coarse distance, but 
may not behave
well with respect to intrinsic coarse distance.
The latter results because
the replacement of a 2-cell $\tau$ of $\Lambda_{u'}$
with a van Kampen diagram $\dd_\tau'$ can result in
the identification of vertices of $\Lambda_{u'}$.  
%Thus another method would be needed to approach
%a proof that intrinsic \tfs\ are completely independent of
%the presentation of the group.

%%%%%%%%%%%%%%%%%%%%%%%%%%%%%%%%%%%%%%%%%%%%%%%%%%%%%%%%%%%%%%%%%%%%%%%%%%%%
%%%%%%%%%%%%%%%%%%%%%%%%%%%%%%%%%%%%%%%%%%%%%%%%%%%%%%%%%%%%%%%%%%%%%%%%%%%%
%%%%%%%%%%%%%%%%%%%%%%%%%%%%%%%%%%%%%%%%%%%%%%%%%%%%%%%%%%%%%%%%%%%%%%%%%%%%

%%%%%  THE BIBLIOGRAPHY

%%%%%%%%%%%%%%%%%%%%%%%%%%%%%%%%%%%%%%%%%%%%%%%%%%%%%%%%%%%%%%%%%%%%%%%%%%%%
%%%%%%%%%%%%%%%%%%%%%%%%%%%%%%%%%%%%%%%%%%%%%%%%%%%%%%%%%%%%%%%%%%%%%%%%%%%%

\end{document}